\theoremstyle{plain}
\newtheorem{thm}{Theorem}[section]
\newtheorem{lem}[thm]{Lemma}
\newtheorem{rmk}[thm]{Remark}
\def\F{\mathscr{F}}
\def\L{\mathscr{L}}
\def\U{\mathscr{U}}
\def\d{\mathrm{d}}
\def\Eset{\mathbb{E}}
\def\Nset{\mathbb{N}}
\def\Pset{\mathbb{P}}
\def\Rset{\mathbb{R}}
\def\Sset{\mathbb{S}}
\def\Zset{\mathbb{Z}}
\def\Span{\mathrm{span}}
\def\epsilon{\varepsilon}
\DeclareMathOperator{\esssup}{\mathrm{ess\,sup}}
\def\theequation{\arabic{section}.\arabic{equation}}
\begin{document}


\title[Continuum limits of coupled oscillator networks]%
{Continuum limits of coupled oscillator networks depending on multiple sparse graphs}
\thanks{This work was partially supported by JSPS KAKENHI Grant Number JP17H02859.}

\author[Ryosuke Ihara]{Ryosuke Ihara${}^\dag$}\thanks{%
${}^\dag$Present address:
 Hanshin Branch Office, Kubota Corporation, 1-1-1 Hama, Amagasaki 661-8567, Japan}

\author{Kazuyuki Yagasaki}

\address{Department of Applied Mathematics and Physics, Graduate School of Informatics,
Kyoto University, Yoshida-Honmachi, Sakyo-ku, Kyoto 606-8501, JAPAN}
\email{ihara.ryosuke.o43@kyoto-u.jp}
\email{yagasaki@amp.i.kyoto-u.ac.jp}

\date{\today}
\subjclass[2010]{34C15; 45J05; 45L05; 05C90}
\keywords{Coupled oscillator network; continuum limit; random graph; sparse graph}

\begin{abstract}
The continuum limit provides a useful tool for analyzing coupled oscillator networks.
Recently, Medvedev (Comm. Math. Sci., 17 (2019),
no.~4, pp.~883--898) gave a mathematical foundation
for such an approach when the networks are defined on single graphs
which may be dense or sparse, directed or undirected, and deterministic or random.
In this paper, we consider coupled oscillator networks depending on multiple graphs,
and extend his results to show that the continuum limit is also valid in this situation.
Specifically, we prove that the initial value problem (IVP)
 of the corresponding continuum limit has a unique solution under general conditions
 and that the solution becomes the limit of those to the IVP of the networks
 in some adequate meaning.
Moreover, we show that if solutions to the networks are stable or asymptotically stable
 when the node number is sufficiently large,
 then so are the corresponding solutions to the continuum limit,
 and that if solutions to the continuum limit are asymptotically stable,
 then so are the corresponding solutions to the networks in some weak meaning
 as the node number tends to infinity. 
These results can also be applied to coupled oscillator networks with multiple frequencies
 by regarding the frequencies as a weight matrix of another graph.
We illustrate the theory for three variants of the Kuramoto model
 along with numerical simulations.
\end{abstract}
\maketitle


\section{Introduction}

Coupled oscillator networks on graphs provide many mathematical models
such as neural networks \cite{LC01,MZ12}, Josephson junctions \cite{PVWO93},
power networks \cite{DB12} and consensus protocols \cite{M12}.
Understanding their dynamics and developing efficient control methods for them
are of importance in applied sciences and engineering,
and they are challenging problems especially because of the diversity of underlying graphs
such as small-world and scale-free properties.
These systems are difficult to treat and analyze
since they are generally of very high dimension and often nonlocally coupled
\cite{GHM12,KB02,OA08,SK04,TK03,WS98,WSG06}.
In this situation, continuum limits provide useful tools
for analyzing nonlocally coupled oscillator networks \cite{GHM12,KB02,OA08,WSG06}.
In the continuum limits,
solutions to highly dimensional systems of differential equations
are approximated by those to single integro-differential equations.
They were successfully used to investigate many interesting phenomena
 such as chimera states \cite{AS06,KB02}, multistability \cite{GHM12,WSG06},
 synchronization \cite{RLZ13,R08}, and coherence-incoherence transition \cite{ORHMS12,OWYMS12}.
Recently, Medvedev \cite{M14a,M14b} gave a mathematical foundation
 for such an approach to networks defined on single graphs
 which may be deterministic or random.
Moreover, he extended these results in \cite{M19}
 to a more general class of graphs containing dense, sparse, directed and undirected ones
 after giving some partial results in \cite{KM17} with his coworker.

A different approach for approximation of coupled oscillator networks by continuum models
 was utilized based on mathematical foundations in \cite{CM19a,CM19b,CMM18}.
Integro-partial differential equations called the \emph{Vlasov equations} were analyzed,
 and the previous results of \cite{C15,CN11} for complete graphs, i.e., all-to-all coupling,
 were extended to general deterministic and random graphs there.
Compared with those results, where probability density functions are treated,
 an advantage of Medvedev's result \cite{M19}
 is to guarantee the almost sure convergence of solutions of the coupled oscillators
 to those of the deterministic continuum limits
 even though the oscillators are defined on random networks.
Thus, his result can give more precise description
 on the dynamics of the coupled oscillators networks if it works although not always.
We now state some details of his result.

Let $G_n=\langle V(G_n),E(G_n),W(G_n)\rangle$, $n\in\Nset$,
be a sequence of weighted graphs,
where $V(G_n)=[n]:=\{1,2,\ldots,n\}$ and $E(G_n)$ are the sets of nodes and edges, respectively,
and $W(G_n)$ is an $n\times n$ weight matrix given by
\begin{equation*}
(W(G_{n}))_{ij}=
\begin{cases}
w_{ij}^n & \mbox{if $(i,j)\in E(G_{n})$};\\
0 &\rm{otherwise.}
\end{cases}
\end{equation*}
The edge set is given by
\[
E(G_n)=\{(i,j)\in[n]^2\mid (W(G_{n}))_{ij}\neq 0\},
\]
where each edge is represented by an ordered pair of nodes $(i,j)$,
which is also denoted by $j\to i$, and a loop is allowed.
If $W(G_n)$ is symmetric,
then $G_n$ represents an undirected weighted graph
and each edge is also denoted by $i\sim j$ instead of $j\to i$.
When $G_n$ is a simple graph,
 $W(G_n)$ is a matrix whose elements are $\{0,1\}$-valued.
We call the graph $G_n$ \emph{\color{black}random}
 if $W(G_n)$ is a random matrix, and \emph{deterministic} otherwise.
Random graphs treated in this paper are simple 
 as in the previous work \cite{M14a,M14b,M19}.
See Section~2.
We say that $G_n$ is a \emph{dense} graph
if $|E(G_n)|/|V(G_n)|^2\rightarrow c$ as $n \rightarrow \infty$ for some constant $c>0$.
If $|E(G_n)|/|V(G_n)|^2\rightarrow 0$ as $n \rightarrow \infty$,
then we call it a \emph{sparse} graph.

We now consider a coupled oscillator network defined on the graph $G_n$,
\begin{equation}
\frac{d}{dt} u_i^n(t) = f ( u_i^n(t), t ) + \frac{1}{n\alpha_n}\sum^{n}_{j=1}
w_{ij}^nD( u_j^n(t) - u_i^n(t) ),\quad
i \in [n],
\label{eqn:ds1}
\end{equation}
where $u_i^n \colon \Rset \rightarrow\Rset$ 
stands for the phase of oscillator at the node $i \in [n]$ 
and $\alpha_n>0$ is a scaling factor which is one if $G_n$ is dense
 and less than one with $\alpha_{n}\searrow 0$ and $n \alpha_{n} \to\infty$
 as $n \rightarrow \infty$, if $G_n$ is sparse.
Moreover, $f(u,t)$ is Lipschitz continuous in $u$ and continuous in $t$,
and $D(u)$ is bounded and Lipschitz continuous.
For instance, when $f(u,t)=\omega$ ($=$const.), $D(u)=\sin u$ and $w_{ij}^n=1$ ($\forall i,j \in[n]$),
Eq.~\eqref{eqn:ds1} becomes a special case of the Kuramoto model \cite{K84},
\begin{equation}
\frac{d}{dt}u_i^n=\omega+\frac{1}{n}\sum^{n}_{j=1}\sin( u_j^n(t)-u_i^n(t) ),
\label{eqn:Km}
\end{equation}
where $u_i^n:\Rset\to\Sset^1=\Rset/2\pi\Zset$, $i\in[n]$.
Here $f(u,t)$ and $D(u)$ are assumed to be $2\pi$-periodic 
 and the ranges of $u_i^n$, $i\in[n]$, are changed from $\Rset$ to $\Sset^1$.
Note that Eq.~\eqref{eqn:Km} cannot be written in the form of \eqref{eqn:ds1}
 if the natural frequency $\omega$ depends on the node $i$.
The continuum limit of \eqref{eqn:ds1} is given by
\begin{equation}
\frac{\partial}{\partial t}u(t,x)=
f(u(t,x),t) + \int _{I} W(x,y) D(u(t, y)-u(t,x)) dy,
\label{eqn:cs1}
\end{equation}
where $u:\Rset\times I\rightarrow\Rset$, $I:=[0,1]$,
and $W(x,y)$ is an $L^2$ function on $I^2$
and represents some kind of limit of the weight matrix $W(G_n)$
(see Sections~2 for the details).
Medvedev and his coworker \cite{KM17,M19} proved that under certain general conditions,
 there exists a unique solution to the initial value problem (IVP) of \eqref{eqn:cs1}
 and it approximates the solution to the IVP of \eqref{eqn:ds1}:
The former is the limit of the latter in some adequate meaning as $n\to\infty$.

On the other hand, the control problem of these nonlinear oscillator networks
is also important in applications \cite{GW15,SA15,SA16}.
One approach of controlling the coupled oscillator network \eqref{eqn:ds1}
 is to apply control force
\begin{equation}
\tilde{f}(\bar{U}(t)-u_i^n(t))
\label{eqn:tf}
\end{equation}
to each oscillator for exhibiting a desired motion,
 e.g., $u_i^n(t)=\bar{U}(t)$ or $|u_i^n(t)-\bar{U}(t)|\gg 1$ for every $i\in[n]$,
 where $\tilde{f}(u)$ is a different (desirably Lipschitz continuous) function from $f(u,t)$.
A similar general example was considered in \cite{SA15,SA16} (cf. Section~4).
Another approach is to apply the control force
\[
\frac{1}{n} \sum^{n}_{j=1} \tilde{w}_{ij}^n \tilde{D} (u_j^n(t) - u_i^n(t))
\]
to each oscillator for improving the performance of the network as desired,
 where $\tilde{w}_{ij}^n$ is the $(i,j)$-element of the weight matrix $\tilde{W}(\tilde{G}_n)$
for another weighted graph $\tilde{G}_n$, which is generally different from $G_n$,
and $\tilde{D}(u)$ is a different (desirably Lipschitz continuous) function from $D(u)$.
Then Eq.~\eqref{eqn:ds1} is modified to
\begin{align}
\frac{d}{dt} u_i^n(t)
=& f ( u_i^n(t), t ) +
\frac{1}{n\alpha_n} \sum^{n}_{j=1} w_{ij}^n D( u_j^n(t) - u_i^n(t) ) \notag\\
&+ \frac{1}{n\alpha_n'} \sum^{n}_{j=1} \tilde{w}_{ij}^n \tilde{D} ( u_j^n(t) - u_i^n(t) ) ,\quad
i \in [n].
\label{eqn:ds2}
\end{align}
Such a control approach depending on an additional network
 was also used in \cite{GC20,GC21}.
Compared with \eqref{eqn:ds1},
 the characteristics of \eqref{eqn:ds2} become more diverse, e.g.,
 attractive and repulsive nodes can be contained for each node.
A variant of the Kuramoto of this type is called a \emph{two-layer multiplex Kuramoto model}
 and was numerically studied in \cite{ST15}.
We expect that the continuum limit approach is also valid even for \eqref{eqn:ds2}
although it has not been proved.

In this paper, we consider more general nonlinear oscillator networks
 depending on multiple graphs $\{G_{kn}\}$, $k\in[m]$,
\begin{align}
\frac{d}{dt} u_i^n (t) =& f(u_i^n(t), t)\notag\\
& + \sum^{m}_{k=1} \frac{1}{n \alpha_{kn}} \sum^{n}_{j=1}
w_{ij}^{kn} D_k( u_j^n(t) - u_i^n(t)),\quad
i \in [n],
\label{eqn:dsk}
\end{align}
where for each $k \in [m]$, $G_{kn}=\langle V(G_{kn}),E(G_{kn}),W(G_{kn})\rangle$
 represents a sequence of dense or sparse, directed or undirected,
 and deterministic weighted or random simple graphs with the weighted matrices
\begin{equation*}
(W(G_{kn}))_{ij}=
\begin{cases}
w_{ij}^{kn} & \mbox{if $(i,j)\in E(G_{kn})$};\\
0 &\rm{otherwise},
\end{cases}
\end{equation*}
and $\alpha_{kn}$ is a scaling factor as in \eqref{eqn:ds1}.
We assume
that $D_k(u)$, $k\in[m]$, are also bounded and Lipschitz continuous.
The system~\eqref{eqn:dsk} can be derived via phase reduction \cite{BMH04}
 from general coupled oscillators, e.g.,
\begin{equation}
\dot{\xi}_i=F_0(\xi_i)+\sum_{k=1}^m\sum_{j=1}^nF_k(\xi_i-\xi_j),\quad
\xi_i\in\Rset^{\nu},\quad
i=1,\ldots,n,
\label{eqn:gco}
\end{equation}
where $F_0:\Rset^{\nu}\to\Rset^\nu$, $F_k:\Rset^\nu\to\Rset^\nu$
like \eqref{eqn:ds1} if each of them exhibits an attracting limit cycle,
 since it is expressed as a superposition of single coupled oscillator networks.
Actually, if the single oscillator
\[
\dot{\xi}=F_0(\xi),\quad
\xi\in\Rset^\nu,
\]
has an attracting limit cycle $\Gamma$,
 then we introduce a scalar phase variable $\theta(\xi)\in\Sset^1$
 for a neighborhood of $\Gamma=\{\xi=\xi^\Gamma(\theta)\mid\theta\in\Sset^1\}$
 to rewrite \eqref{eqn:gco} as
\[
\dot{\theta}_i
=\omega+\sum_{k=1}^m\sum_{j=1}^n\frac{\partial\theta}{\partial\xi}(\xi)F_k(\xi_i-\xi_j),
\]
where
\[
\omega=\frac{\partial\theta}{\partial\xi}(\xi)F_0(\xi)
\]
(see Section~2 of \cite{BMH04}).
Substituting the relation $\xi_j=\xi^\Gamma(\theta)$ yields coupled oscillators
 of the form \eqref{eqn:dsk}.
See also \cite{ST15} for such a derivation for two networks.
Similar but different extensions of coupled oscillators
 from single networks to multi-layer networks were also made
 and numerically studied recently \cite{AAPA17,KS19,MTB20}.
Our result may be extended to show that
 appropriate continuum limits related to the models are also valid.

Extending arguments given in \cite{KM17,M19},
 we prove that the IVP of the continuum limit corresponding to \eqref{eqn:dsk},
\begin{equation}
\frac{\partial}{\partial t} u(t,x) = f(u(t,x),t) + \sum^{m}_{k=1}
\int _{I} W_k(x,y) D_{k}(u(t,y)-u(t,x)) dy,
\label{eqn:csk}
\end{equation}
with the initial condition
\begin{equation}
u(x,0)=g(x),
\label{eqn:icc}
\end{equation}
where $W_k(x,y)$, $k\in[n]$, are $L^2$ functions on $I^2$,
 and $g(x)$ is an $L^2$ function on $I$,
 has a unique solution under general conditions
 and that the solution becomes the limit of those to the IVP of \eqref{eqn:dsk}
 with the initial condition
\begin{equation}
u_i^n(0) = u_{i0}^n:=n\int_{I_i^n}g(x)dx
\label{eqn:icd}
\end{equation}
in some adequate meaning as $n \to \infty$
 (see Theorems~\ref{thm:main1} and \ref{thm:main2} below).
Here
\[
I_i^n:=\begin{cases}
[(i-1)/n,i/n) & \mbox{for $i<n$};\\
[(n-1)/n,1] & \mbox{for $i=n$}.
\end{cases}
\]
Moreover, we show that
 if solutions to \eqref{eqn:dsk} are stable  or asymptotically stable for $n>0$ sufficiently large,
 then so is the corresponding solution to \eqref{eqn:csk},
 and that if solutions to \eqref{eqn:csk} are asymptotically stable,
 then  so is the corresponding solution to \eqref{eqn:dsk} in some weak meaning as $n\to\infty$
 (see Theorems~\ref{thm:main3} and \ref{thm:main4} below).

Replacing $m$ with $m+1$ and letting $w_{ij}^{m+1,n}=\omega_i^n$, $D_{m+1}(u)=1$
 and $\alpha_{n,m+1}=1$,
 we see that Eq.~\eqref{eqn:dsk} contains
\begin{align}
\frac{d}{dt} u_i^n (t) =& f ( u_i^n(t), t )+\omega_i^n\notag\\
& + \sum^{m}_{k=1} \frac{1}{n \alpha_{kn}} \sum^{n}_{j=1}
w_{ij}^{kn} D_k ( u_j^n(t) - u_i^n(t) ),\quad
i \in [n],
\label{eqn:dskw}
\end{align}
as a special case and the corresponding continuum limit is given by
\begin{equation}
\frac{\partial}{\partial t} u(t,x) = f(u(t,x),t) +
\omega(x)+\sum^{m}_{k=1}\int _{I} W_k(x,y) D_{k}(u(t,y)-u(t,x)) dy,
\label{eqn:cskw}
\end{equation}
where $\omega(x)$ is an $L^2$ function on $I$ such that
\begin{equation}
\omega_i^n=n\int_{I_i^n}\omega(x)dx, \quad i \in [n].
\label{eqn:omega}
\end{equation}
Moreover, we can treat the case
 in which the natural frequencies are randomly determined
 by $\omega_i^n=\omega^\ast$ or $0$ with some probabilities depending on $i\in[n]$
 where $\omega^\ast\neq 0$ is a constant,
 like $w_{ij}^{kn}$, $k\in[m]$ (cf. Section~2).
Thus, the results for \eqref{eqn:dsk} and \eqref{eqn:csk}
 can be applied to coupled oscillator networks with multiple frequencies
 by regarding the frequencies as a weight matrix of another graph
 although the results of \cite{KM17,M19} cannot
 even if $m=1$, i.e., depending on a single network.
Note that in \eqref{eqn:dsk}
 the natural frequencies can depend on the nodes even if they are randomly determined
 although in most of the previous research, e.g.,
 \cite{AS06,C15,CM19a,CM19b,CMM18,CN11,K84,KB02,OA08,RLZ13,R08,SA15,SA16},
 the natural frequencies are typically independent of the nodes
 and randomly determined when they are not constant.

Our theory is applicable to various types of coupled oscillator networks
 from deterministic and random ones to their mixtures.
So we choose the following three variants of the Kuramoto model \eqref{eqn:Km}
 as relatively basic ones to illustrate the theory:
\begin{itemize}
\setlength{\leftskip}{-2.5em}
\item
It has multiple natural frequencies and depends on a single graph;
\item
it is the same as the above but subjected to feedback control;
\item
it has no natural frequencies but depends on two graphs.
\end{itemize}
We also give numerical simulation results for each example.
In the last example,
 where a complete and nearest neighbor graphs are chosen more concretely as the two graphs,
 we see that a modification of type \eqref{eqn:ds2} can control
 the Kuramoto model \eqref{eqn:Km} with $\omega=0$ on the complete graph
 from a complete synchronized state to a different synchronized one.
Further applications will be reported in subsequent work.

The outline of this paper is as follows:
In Section 2, we give our four theorems:
The first one is for unique existence of solutions in the IVP
 of the continuum limit \eqref{eqn:csk} with \eqref{eqn:icc},
 the second one is for convergence of solutions to the IVP
 of the coupled oscillator network \eqref{eqn:dsk}
 with \eqref{eqn:icd} to those to the IVP of \eqref{eqn:csk} with \eqref{eqn:icc},
 and third and fourth ones are for relations of solutions to \eqref{eqn:dsk} and \eqref{eqn:csk} on stability.
Proofs of the first and second results are given in Appendices~A and B, respectively,
 while the third and fourth ones are proved there.
In the remaining three sections
 we demonstrate the theoretical results for the three variants
 of the Kuramoto model \eqref{eqn:Km} along with numerical simulations.


\section{
Theory}

In this section we give the four theorems
 for unique existence of solutions in the IVP of the continuum limit \eqref{eqn:csk},
 for convergence of solutions to the IVP of the coupled oscillator network \eqref{eqn:dsk}
 to those to the IVP of \eqref{eqn:csk},
 and for stability of solutions to \eqref{eqn:dsk} and \eqref{eqn:csk}.
Henceforth, we assume for the $L^2$ functions $W_k(x,y)$, $k\in[m]$,
that there exist positive constants $C_j$, $j=1,2$, such that
\begin{equation}
\esssup\displaylimits_{y \in I} \int_I|W_k (x,y)|dx\leq C_1
\label{eqn:assumpx}
\end{equation}
and
\begin{equation}
\esssup\displaylimits_{x\in I} \int_I|W_k (x,y)|dy \leq C_2,
\label{eqn:assumpy}
\end{equation}
where $C_j$, $j=1,2$, are independent of $k\in[m]$.
If $W_k(x,y)$, $k\in[m]$, are symmetric,
then conditions~\eqref{eqn:assumpx} and \eqref{eqn:assumpy} are equivalent.

We begin with the IVP for the continuum model \eqref{eqn:csk} with \eqref{eqn:icc}.
Let $\mathbf{u}:\Rset\to L^2(I)$ stand for an $L^2(I)$-valued function.
Extending arguments in the proof of Theorem~3.1 of \cite{KM17},
we prove the following theorem.

\begin{thm}
\label{thm:main1}
Let $f(u,t)$ and $D_k(u)$, $k\in[m]$, be Lipschitz continuous in $u$
 and let $f(u,t)$ continuous in $t$, as stated in Section~$1$.
Suppose that $W_k \in L^2(I^2)$, $k \in [m]$, satisfy \eqref{eqn:assumpy}
and $g \in L^2(I)$.
Then there is a unique solution $\mathbf{u}(t)\in C^1(\Rset;L^2(I))$
 to the IVP of \eqref{eqn:csk} with \eqref{eqn:icc}.
Moreover, the solution depends continuously on $g$.
\end{thm}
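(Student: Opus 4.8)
The plan is to view the IVP of \eqref{eqn:csk} with \eqref{eqn:icc} as an abstract ODE $\mathbf{u}'(t)=F(\mathbf{u}(t),t)$ in the Banach space $L^2(I)$, where
\[
F(\mathbf{v},t)(x):=f(\mathbf{v}(x),t)+\sum_{k=1}^m\int_I W_k(x,y)\,D_k\bigl(\mathbf{v}(y)-\mathbf{v}(x)\bigr)\,dy ,
\]
and to solve the equivalent integral equation $\mathbf{u}(t)=g+\int_0^t F(\mathbf{u}(s),s)\,ds$ by the contraction mapping principle. First I would verify that $F$ maps $L^2(I)\times\Rset$ into $L^2(I)$: writing $L_f$ for the Lipschitz constant of $f$ and $M_k:=\sup|D_k|$, one has $|f(\mathbf{v}(x),t)|\le|f(0,t)|+L_f|\mathbf{v}(x)|$, which lies in $L^2(I)$, while $\bigl|\int_I W_k(x,y)D_k(\mathbf{v}(y)-\mathbf{v}(x))\,dy\bigr|\le M_k\int_I|W_k(x,y)|\,dy\le M_kC_2$ by \eqref{eqn:assumpy}, which lies in $L^\infty(I)\subset L^2(I)$ since $|I|=1$; measurability in $x$ follows from $W_k\in L^2(I^2)$ together with Fubini's theorem.

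The crux is a \emph{global} Lipschitz estimate for $F(\cdot,t)$, uniform in $t$. The $f$-part contributes $L_f\|\mathbf{v}-\mathbf{w}\|_{L^2}$. For each $k$, with $L_{D_k}$ the Lipschitz constant of $D_k$, the difference of integrands is bounded pointwise by $L_{D_k}|W_k(x,y)|\bigl(|\mathbf{v}(y)-\mathbf{w}(y)|+|\mathbf{v}(x)-\mathbf{w}(x)|\bigr)$. Integrating the first summand in $y$ and applying the Cauchy--Schwarz inequality bounds it by $\|W_k(x,\cdot)\|_{L^2(I)}\|\mathbf{v}-\mathbf{w}\|_{L^2(I)}$, whose $L^2(I)$-norm in $x$ equals $\|W_k\|_{L^2(I^2)}\|\mathbf{v}-\mathbf{w}\|_{L^2(I)}$; the second summand integrates to $|\mathbf{v}(x)-\mathbf{w}(x)|\int_I|W_k(x,y)|\,dy\le C_2|\mathbf{v}(x)-\mathbf{w}(x)|$ by \eqref{eqn:assumpy}, with $L^2(I)$-norm at most $C_2\|\mathbf{v}-\mathbf{w}\|_{L^2}$. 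Hence $\|F(\mathbf{v},t)-F(\mathbf{w},t)\|_{L^2}\le L\|\mathbf{v}-\mathbf{w}\|_{L^2}$ with $L:=L_f+\sum_{k=1}^m L_{D_k}\bigl(\|W_k\|_{L^2(I^2)}+C_2\bigr)$, independent of $t$. Continuity of $F$ in $t$ for fixed $\mathbf{v}$ follows from continuity of $f$ in $t$ and the dominated convergence theorem, the dominating function on a compact time interval $J$ being $\sup_{s\in J}|f(0,s)|+L_f|\mathbf{v}(\cdot)|\in L^2(I)$; with the uniform Lipschitz bound this yields joint continuity of $F$.

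With these properties the remainder is routine. On $C([-T,T];L^2(I))$ equipped with the Bielecki norm $\|\mathbf{u}\|_\ast:=\sup_{|t|\le T}e^{-2L|t|}\|\mathbf{u}(t)\|_{L^2}$, the map $\mathbf{u}\mapsto g+\int_0^{\,\cdot} F(\mathbf{u}(s),s)\,ds$ is a contraction with constant $\tfrac12$, independent of $T$, so it has a unique fixed point; since $T$ is arbitrary and, by uniqueness, the fixed points agree on overlapping intervals, this produces a unique $\mathbf{u}\in C(\Rset;L^2(I))$ solving the integral equation, and joint continuity of $F$ makes $s\mapsto F(\mathbf{u}(s),s)$ continuous, so $\mathbf{u}\in C^1(\Rset;L^2(I))$ and $\mathbf{u}$ solves \eqref{eqn:csk} with \eqref{eqn:icc}. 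Finally, if $\mathbf{u},\tilde{\mathbf{u}}$ solve the IVP with initial data $g,\tilde g$, subtracting the integral equations and applying Gr\"onwall's inequality to $t\mapsto\|\mathbf{u}(t)-\tilde{\mathbf{u}}(t)\|_{L^2}$ gives $\|\mathbf{u}(t)-\tilde{\mathbf{u}}(t)\|_{L^2}\le e^{L|t|}\|g-\tilde g\|_{L^2}$, i.e.\ Lipschitz dependence on $g$, locally uniformly in $t$. The main obstacle I anticipate is exactly the verification of the mapping property and of the uniform-in-$t$ global Lipschitz estimate for $F$ --- in particular controlling the term in which $\mathbf{v}(x)$ appears inside $D_k$, which is precisely where \eqref{eqn:assumpy} is used --- and the payoff is that, the Lipschitz constant being global, local existence upgrades at once to existence on all of $\Rset$; everything else is the classical Picard--Lindel\"of scheme for Banach-space-valued ODEs.
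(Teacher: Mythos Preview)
Your proposal is correct and follows essentially the same route as the paper's proof in Appendix~A: both recast the IVP as the integral equation $\mathbf{u}(t)=g+\int_0^t F(\mathbf{u}(s),s)\,ds$ and establish a global Lipschitz bound for $F$ using precisely the two ingredients you isolate---Cauchy--Schwarz and $\|W_k\|_{L^2(I^2)}$ for the $\mathbf{v}(y)$-term, and \eqref{eqn:assumpy} with constant $C_2$ for the $\mathbf{v}(x)$-term---then apply the contraction mapping principle. The only cosmetic differences are that the paper fixes a small time $T=\bigl(2(L_f+mL_D(C_2+\|W\|_{L^2(I^2)}))\bigr)^{-1}$ and iterates, whereas you use a Bielecki weight to get the contraction on arbitrary intervals directly; and the paper deduces continuous dependence from the uniform contraction, whereas you write out the Gr\"onwall estimate explicitly.
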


See Appendix~A for the proof of Theorem~\ref{thm:main1}.

\begin{rmk}
\label{rmk:2a}
Theorem~$3.1$ of Kaliuzhnyi-Verbovetskyi and Medvedev {\rm\cite{KM17}}
 contains the statement of Theorem~$\ref{thm:main1}$ with $m=1$ when
\[
\int_I W_1(x,y)dy=1
\]
instead of \eqref{eqn:assumpy}.
For the reader's convenience,
 we give a detailed proof of Theorem~$\ref{thm:main1}$ in Appendix~$A$.
\end{rmk}

We turn to the issue on convergence of solutions in the coupled oscillator network \eqref{eqn:dsk}
 to those in the continuum limit \eqref{eqn:csk}.
Following the approach of \cite{M19} basically
 and using a measurable function $W_k$ called a \emph{graphon} \cite{L12},
we define the asymptotic structure of the graphs $\{G_{kn}\}$ for each $k\in[m]$.
Discretize the interval $[0,1]$
by points $x_j^n = j/n$, $j \in \{0\} \cup [n]$,
so that $I_i^n= (x_{n,i-1}, x_i^n]$, $i \in [n]$.
Here the uniform mesh $\left\{x_i^n, i = 0, 1, \ldots, n \right\}$
has been chosen for simplicity, as this is sufficient for our applications,
although any other dense mesh of $[0,1]$ can be used.
Depending on whether the graph is random or deterministic,
dense or sparse, and directed or undirected,
the sequences of graphs $\{G_{kn}\}$ are constructed for each $k\in[m]$ as follows:
\begin{itemize}
\setlength{\leftskip}{-2.5em}
\item
If $G_{kn}$, $n\in\Nset$, are deterministic dense graphs, then
\begin{equation}
w_{ij}^{kn} = \langle W_k \rangle_{ij}^n
:= n^2 \int_{I_i^n \times I_j^n}W_k(x,y) dxdy.
\label{eqn:ddd}
\end{equation}
\item
If $G_{kn}$, $n\in\Nset$, are random dense graphs,
then $w_{ij}^{kn}=1$ with probability
\begin{equation}
\mathbb{P}(j \rightarrow i) = \langle W_k \rangle_{ij}^n,
\label{eqn:ddr}
\end{equation}
where the range of $W_k$ is contained in $I$.
\item
If $G_{kn}$, $n\in\Nset$, are random sparse graphs,
then $w_{ij}^{kn}=1$ with probability
\begin{equation}
\mathbb{P}(j \rightarrow i) = \alpha_{kn} \langle \tilde{W}_{kn} \rangle_{ij}^n,
\quad \tilde{W}_{kn}(x,y) :=\alpha^{-1}_{kn} \wedge W_k(x,y),
\label{eqn:sdr}
\end{equation}
where $W_k$ is a nonnegative function
and $\{\alpha_{kn}\}_{n\in\Nset}$ is a sequence such that $\alpha_{kn}\in(0,1)$, and
$\alpha_{kn}\searrow 0$ and $n \alpha_{kn} \rightarrow \infty$ as $n \rightarrow \infty$.
Here $a\wedge b=\min(a,b)$ for $a,b\in\Rset$.
Specifically, we take $\alpha_{kn} =n^{-\gamma_k}$ with $\gamma_k\in(0,\frac{1}{2})$ below.
\end{itemize}

For the random graphs,
$w_{ij}^{kn}=1$, $k\in[m]$ and $i,j\in[n]$, are assumed
to be independent Bernoulli random variables
with probability of success given by \eqref{eqn:ddr} or \eqref{eqn:sdr}.
We easily see that random sparse graphs with $\alpha_{kn}=1$ and $W_k(I^2)\subset I$
reduce to random dense graphs.
If $W_k$ is symmetric,
then the graph $G_{kn}$ is undirected (i.e., $w_{ij}^{kn}=w_{ji}^{kn}$)
and `$\to$' is replaced with `$\sim$' in \eqref{eqn:ddr} and \eqref{eqn:sdr}.
We allow a mixture of graphs of any type stated above in \eqref{eqn:dsk}.

Let
\[
d^+_{kn,i}=\sum^{n}_{j=1}w_{ij}^{kn}
\quad\mbox{and}\quad
d^-_{kn,j}=\sum^{n}_{j=1}w_{ij}^{kn},
\]
which represent the sum of the weights of directed edges
pointing to $i\in[n]$ and going from $j\in[n]$, respectively.
We call $d^+_{kn,i}$ and $d^-_{kn,j}$ \emph{in-degree} of $i$ and \emph{out-degree} of $j$.
Note that $d^+_{kn,i}=d^-_{kn,i}$ for any $i\in[n]$
if $G_{kn}$, $n\in\Nset$, are undirected.
From \eqref{eqn:assumpx} and \eqref{eqn:assumpy} we have
\begin{equation}
\begin{split}
&
d^+_{kn,i}=n^2\int_{I_i^n\times I}W_k(x,y)dxdy\leq nC_2,\\
&
d^-_{kn,j}=n^2\int_{I\times I_j^n}W_k(x,y)dxdy\leq nC_1
\end{split}
\label{eqn:dd}
\end{equation}
for deterministic dense graphs, and
\begin{equation}
\begin{split}
& \mathbb{E} d^{+}_{kn,i} = \sum^{n}_{j=1} \alpha_{kn} \langle \tilde{W}_{kn} \rangle_{ij}^n
=\alpha_{kn} n^2\int_{I}W_{k}(x,y)dxdy\leq n\alpha_{kn}C_2,\\
& \mathbb{E} d^{-}_{kn,j} = \sum^{n}_{j=1} \alpha_{kn} \langle \tilde{W}_{kn} \rangle_{n,ji}
= \alpha_{kn} n^2\int_{I}W_{k}(x,y) dxdy\leq n\alpha_{kn}C_1
\end{split}
\label{eqn:dr}
\end{equation}
with $n\gg 1$ for random graphs.
Condition~\eqref{eqn:dd} or \eqref{eqn:dr}
 is satisfied for dense graphs 
 and many random sparse graphs.
However, there are some important graphs which do not satisfy the condition.
See \cite{M19} for more details.

Since the right-hand-side of \eqref{eqn:dsk} is Lipschitz continuous in $u_i^n$, $i\in[n]$,
 we see by a fundamental result of ordinary differential equations
 (e.g., Theorem~2.1 of Chapter~1 of \cite{CL55})
 that the IVP of \eqref{eqn:dsk} with \eqref{eqn:icd} has a unique solution.
Given a solution $u_n(t)=(u_1^n(t), u_2^n(t), \ldots, u_n^n(t))$
to the IVP of the discrete model \eqref{eqn:dsk} with \eqref{eqn:icd},
we define an $L^2(I)$-valued function $\mathbf{u}_n:\Rset\to L^2(I)$ as
\begin{equation}
\mathbf{u}_n(t) = \sum^{n}_{i=1} u_i^n(t) \mathbf{1}_{I_i^n},
\label{eqn:un}
\end{equation}
where $\mathbf{1}_{I_j^n}$ represents the characteristic function of $I_i^n$, $i\in[n]$.
Let $\|\cdot\|$ denote the norm in $L^2(I)$.
Extending arguments in the proof of Theorem~3.1 of \cite{M19},
we prove the following theorem.

\begin{thm}
\label{thm:main2}
Suppose that the hypotheses of Theorem~$\ref{thm:main1}$ hold
 along with \eqref{eqn:assumpx}
 and that $D_k(u)$, $k\in[m]$, are bounded.
Let $\{G_{kn}\}_{n\in\Nset}$ be a sequence of graphs
 defined by the graphon $W_k\in L^{2}(I^2)$ for $k\in[m]$ as above,
 and let $\alpha_{kn} =n^{-\gamma_k}$,
where $\gamma_k = 0$ if $G_{kn}$ is a dense graph,
and $\gamma_k\in(0,\frac{1}{2})$ if $G_{kn}$ is a sparse random graph.
If $u_{n}(t)$ is the solution to the IVP
 of the discrete model \eqref{eqn:dsk} with \eqref{eqn:icd},
 then for any $T > 0$ we have
\[
\lim_{n \rightarrow \infty}\max_{t\in[0,T]}\|\mathbf{u}_n(t)-\mathbf{u}(t)\|=0 \quad\mathrm{a.s.},
\]
where $\mathbf{u}(t)$ represents the solution
to the IVP of the continuum limit \eqref{eqn:csk} with \eqref{eqn:icc}.
\end{thm}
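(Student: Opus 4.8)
The plan is to combine a Gronwall-type energy estimate in $L^2(I)$ with Hoeffding's concentration inequality and the Borel--Cantelli lemma, in the spirit of the proof of Theorem~3.1 of \cite{M19}, handling the $m$ coupling terms of \eqref{eqn:dsk} one at a time; since $m$ is fixed, a mixture of graph types and of dense/sparse scalings costs only the intersection of finitely many almost-sure events. For each $k\in[m]$ put $p_{ij}^{kn}=w_{ij}^{kn}$ if $G_{kn}$ is deterministic and $p_{ij}^{kn}=\Eset w_{ij}^{kn}$ (given by \eqref{eqn:ddr} or \eqref{eqn:sdr}) if $G_{kn}$ is random, and let $\hat u_n(t)$ be the \emph{deterministic} solution of the IVP obtained from \eqref{eqn:dsk}--\eqref{eqn:icd} by replacing every $w_{ij}^{kn}$ with $p_{ij}^{kn}$, with step-function interpolant $\hat{\mathbf u}_n(t)$ defined as in \eqref{eqn:un}. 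By the triangle inequality it suffices to show that $\max_{t\in[0,T]}\|\hat{\mathbf u}_n(t)-\mathbf u(t)\|\to0$ deterministically and that $\max_{t\in[0,T]}\|\mathbf u_n(t)-\hat{\mathbf u}_n(t)\|\to0$ almost surely.

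For the deterministic limit, $\hat{\mathbf u}_n$ solves in $L^2(I)$ the equation $\partial_t\hat{\mathbf u}_n=f(\hat{\mathbf u}_n,t)+\sum_{k=1}^m\hat{\mathcal K}_{kn}(\hat{\mathbf u}_n)$, where $\hat{\mathcal K}_{kn}$ is the integral operator whose kernel $W_{kn}$ is the $L^2(I^2)$-projection, onto the functions constant on each $I_i^n\times I_j^n$, of $W_k$ in the dense case and of $\tilde W_{kn}=\alpha_{kn}^{-1}\wedge W_k$ in the sparse case. Writing $\psi_n=\hat{\mathbf u}_n-\mathbf u$ and differentiating $\|\psi_n\|^2$, the right-hand side splits into a part Lipschitz in $\psi_n$ — bounded by $(L_f+\sum_kL_k((C_1C_2)^{1/2}+C_2))\|\psi_n\|$ via the Lipschitz constants $L_f,L_k$ of $f,D_k$ together with \eqref{eqn:assumpx}--\eqref{eqn:assumpy} (cf.\ the proof of Theorem~\ref{thm:main1}) — and a discretization error $\sum_k\|\hat{\mathcal K}_{kn}(\hat{\mathbf u}_n)-\mathcal K_k(\hat{\mathbf u}_n)\|$, which by $|D_k|\le M_k:=\sup|D_k|$ and Cauchy--Schwarz is at most $\sum_kM_k\|W_{kn}-W_k\|_{L^2(I^2)}$, a bound independent of the state. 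Gronwall's inequality then gives, for $t\in[0,T]$, $\|\psi_n(t)\|\le e^{CT}(\|P_ng-g\|+T\sum_kM_k\|W_{kn}-W_k\|_{L^2(I^2)})$, where $P_n$ is the orthogonal projection of $L^2(I)$ onto step functions, and the right-hand side tends to $0$: $\|P_ng-g\|\to0$ by density of step functions, while $\|W_{kn}-W_k\|_{L^2(I^2)}\to0$ by the same density together with $0\le W_k-\tilde W_{kn}\le W_k\in L^2$ and $\tilde W_{kn}\to W_k$ a.e.\ (dominated convergence) in the sparse case. The same estimate also yields the a priori bound $B:=\sup_n\max_{t\in[0,T]}\|\hat{\mathbf u}_n(t)\|<\infty$ — using $\|\hat{\mathbf u}_n(0)\|=\|P_ng\|\le\|g\|$, the continuity of $t\mapsto f(0,t)$, and $\|\hat{\mathcal K}_{kn}(\hat{\mathbf u}_n)\|\le M_kC_2$ — so that $t\mapsto\hat{\mathbf u}_n(t)$ is Lipschitz into $L^2(I)$ with a constant $\Lambda$ independent of $n$.

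For the probabilistic limit set $\rho_n=\mathbf u_n-\hat{\mathbf u}_n$, so $\rho_n(0)=0$. Differentiating $\|\rho_n\|^2$ and inserting $\pm D_k(\hat u_j^n(t)-\hat u_i^n(t))$ and $\pm p_{ij}^{kn}$ in the $k$-th coupling term splits $\dot\rho_n$ into a part Lipschitz in $\rho_n$, whose coefficient is controlled for all large $n$ almost surely by the degree bounds $\max_i d^+_{kn,i}/(n\alpha_{kn}),\ \max_i d^-_{kn,i}/(n\alpha_{kn})\le\max(C_1,C_2)+1$ (Hoeffding on $d^\pm_{kn,i}-\Eset d^\pm_{kn,i}$, whose means are $\le n\alpha_{kn}\max(C_1,C_2)$ by \eqref{eqn:dd}--\eqref{eqn:dr}, a union bound over $i$, and Borel--Cantelli, where $\gamma_k<\tfrac12$ makes the resulting exponents $n^{1-2\gamma_k}$ positive), together with a genuinely random remainder $S_{kn}(t)$ equal on $I_i^n$ to $(n\alpha_{kn})^{-1}\sum_{j=1}^n(w_{ij}^{kn}-p_{ij}^{kn})D_k(\hat u_j^n(t)-\hat u_i^n(t))$. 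Gronwall with $\rho_n(0)=0$ reduces everything to proving $\max_{t\in[0,T]}\|S_{kn}(t)\|\to0$ almost surely for each $k$. Since $\hat{\mathbf u}_n$ is deterministic, for fixed $t$ and $i$ the inner sum is a sum of $n$ independent mean-zero variables bounded by $M_k$ in absolute value, so Hoeffding and a union bound over $i$ give $\Pset(\|S_{kn}(t)\|>\varepsilon)\le2n\exp(-c_k(\varepsilon)n\alpha_{kn}^2)=2n\exp(-c_k(\varepsilon)n^{1-2\gamma_k})$ for some $c_k(\varepsilon)>0$. To make this uniform in $t$, I would combine the $n$-independent $L^2$-Lipschitz continuity of $t\mapsto\hat{\mathbf u}_n(t)$ from the previous paragraph with the bound $\|S_{kn}(t)-S_{kn}(s)\|\le C\alpha_{kn}^{-1/2}|t-s|=Cn^{\gamma_k/2}|t-s|$, valid for all large $n$ almost surely because $\alpha_{kn}^{-2}n^{-2}\sum_{i,j}(w_{ij}^{kn}-p_{ij}^{kn})^2=O(\alpha_{kn}^{-1})$ almost surely (again from the degree bounds). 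On a uniform time mesh of width $n^{-\gamma_k}$ (so $n^{\gamma_k/2}\cdot n^{-\gamma_k}\to0$ and there are $O(n^{\gamma_k})$ mesh points) the union bound yields $\Pset(\max_{t\in[0,T]}\|S_{kn}(t)\|>2\varepsilon)\le O(n^{1+\gamma_k})\exp(-c_k(\varepsilon)n^{1-2\gamma_k})$ for all large $n$, which is summable because $\gamma_k<\tfrac12$; Borel--Cantelli concludes.

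I expect this last estimate to be the main obstacle, for two reasons. First, $S_{kn}$ cannot be controlled by applying $|D_k|\le M_k$ term by term, since $(n\alpha_{kn})^{-1}\sum_j|w_{ij}^{kn}-p_{ij}^{kn}|$ is only $O(1)$, not $o(1)$; the cancellation in the centered sum over $j$ must be retained, which is exactly why the state has to be decoupled from the graph randomness through the mean-field surrogate $\hat{\mathbf u}_n$ before Hoeffding can be applied. Second, upgrading the pointwise-in-$t$ concentration to an almost-sure bound uniform over $[0,T]$ forces one to balance the quantitative time-equicontinuity of $S_{kn}$ — whose Lipschitz constant grows like $\alpha_{kn}^{-1/2}=n^{\gamma_k/2}$ — against the Hoeffding exponent $n^{1-2\gamma_k}$ and the polynomial union bound; this is precisely where the hypothesis $\gamma_k\in(0,\tfrac12)$ enters. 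The remaining ingredients — the two Gronwall estimates, the projection and dominated-convergence limits for the kernels, and the degree concentration — are routine once this estimate is in hand.
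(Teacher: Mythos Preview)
Your proposal is correct and follows the same two–step architecture as the paper's Appendix~B: introduce the deterministic ``averaged'' surrogate $\hat u_n$ (the paper's $v_n$ in \eqref{eqn:ask}), compare it to the continuum limit by a Gronwall estimate together with $\|W_{kn}-W_k\|_{L^2(I^2)}\to0$ (exactly the paper's argument after Lemma~\ref{lem:b1}), and compare the discrete model to $\hat u_n$ by a second Gronwall estimate in which the coupling is split into a Lipschitz-in-$\rho_n$ part controlled by the a.s.\ degree bounds \eqref{eqn:lemb5} and a centered random remainder built from $D_k(\hat u_j^n-\hat u_i^n)$ (the paper's $Z_{kn}$ in Lemma~\ref{lem:b1}).

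The one genuine methodological difference is how you dispatch the random remainder. The paper, following \cite{M19}, controls the \emph{time–integrated} quantity $\int_0^T e^{-Ls}\|Z_{kn}(s)\|_{2,n}^2\,ds\le C_3/n$ a.s.\ (Eq.~\eqref{eqn:lemb10}), which after Gronwall yields the explicit rate \eqref{eqn:sec4a} without any discretization of $[0,T]$. You instead prove $\max_{t\in[0,T]}\|S_{kn}(t)\|\to0$ a.s.\ by combining a pointwise-in-$t$ Hoeffding bound, a union bound over $i$, the Lipschitz-in-$t$ estimate $\|S_{kn}(t)-S_{kn}(s)\|\le C\alpha_{kn}^{-1/2}|t-s|$, and a mesh of width $n^{-\gamma_k}$, then invoke Borel--Cantelli. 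Both routes exploit the same decoupling idea (the coefficients $D_k(\hat u_j^n-\hat u_i^n)$ are deterministic, so the centered sums are genuine sums of independent bounded variables). Your chaining argument is more self-contained and makes the role of the hypothesis $\gamma_k<\tfrac12$ completely explicit at the level of balancing the mesh cardinality $O(n^{\gamma_k})$, the Lipschitz blow-up $n^{\gamma_k/2}$, and the Hoeffding exponent $n^{1-2\gamma_k}$; the paper's integrated estimate is shorter but defers the ``lengthy arguments'' to \cite{M19} and gives the sharper quantitative rate $n^{-(1-2\gamma_{\max})/2}$ visible in \eqref{eqn:sec4a}.
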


See Appendix~B for the proof of Theorem~\ref{thm:main2}.

\begin{rmk}
\label{rmk:2b}
The statement of Theorem~$\ref{thm:main2}$ with $m=1$
 is the same as that of Theorem~$3.1$ of Medvedev {\rm\cite{M19}}.
For the reader's convenience, we modify his arguments in our setting
  and give a proof of Theorem~$\ref{thm:main2}$ in Appendix~B
  since necessary modifications are not so straightforward
  and some new ideas are needed.
\end{rmk}

Finally, we discuss the stability of solutions to \eqref{eqn:dsk} and \eqref{eqn:csk}.
We say that solutions $\bar{\mathbf{u}}_n(t)$ and $\bar{\mathbf{u}}(t)$
 to \eqref{eqn:dsk} and \eqref{eqn:csk} are \emph{stable}
 if for any $\epsilon>0$ there exists $\delta>0$
 such that $\|\mathbf{u}_n(t)-\bar{\mathbf{u}}_n(t)\|<\epsilon$
 and $\|\mathbf{u}(t)-\bar{\mathbf{u}}(t)\|<\epsilon$
 when $\|\mathbf{u}_n(0)-\bar{\mathbf{u}}_n(0)\|<\delta$
 and $\|\mathbf{u}(0)-\bar{\mathbf{u}}(0)\|<\delta$, respectively.
Moreover, they are called \emph{asymptotically stable}
 if $\lim_{t\to\infty}\|\mathbf{u}_n(t)-\bar{\mathbf{u}}_n(t)\|=0$
 or $\lim_{t\to\infty}\|\mathbf{u}(t)-\bar{\mathbf{u}}(t)\|=0$ additionally.
We have the following result.

\begin{thm}
\label{thm:main3}
Suppose that under the hypotheses of Theorem~$\ref{thm:main2}$,
 the discrete model \eqref{eqn:dsk} and continuum limit \eqref{eqn:csk}, respectively,
 have solutions $\bar{\mathbf{u}}_n(t)$ and $\bar{\mathbf{u}}(t)$ such that
\[
\lim_{n\to\infty}\|\bar{\mathbf{u}}_n(t)-\bar{\mathbf{u}}(t)\|=0\quad\text{a.s.}
\]
for any $t\in[0,\infty)$.
Then the following hold$\,:$
\begin{enumerate}
\setlength{\leftskip}{-1.8em}
\item[\rm(i)]
If $\bar{\mathbf{u}}_n(t)$ is stable $($resp. asymptotically stable$)$ a.s. for $n>0$ sufficiently large,
 then $\mathbf{u}(t)$ is also stable $($resp. asymptotically stable$);$
\item[\rm(ii)]
If $\bar{\mathbf{u}}(t)$ is asymptotically stable, then
\begin{equation}
\lim_{t\to\infty}\lim_{n\to\infty}\|\mathbf{u}_n(t)-\bar{\mathbf{u}}_n(t)\|=0
\quad\text{a.s.},
\label{eqn:thm3}
\end{equation}
where $\mathbf{u}_n(t)$ is any solution to \eqref{eqn:dsk}
 such that $\mathbf{u}_n(0)$ is contained in the basin of attraction for $\bar{\mathbf{u}}(t)$.
\end{enumerate}
\end{thm}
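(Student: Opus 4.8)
The plan is to base every assertion on the triangle inequality
\[
\|\mathbf{u}(t)-\bar{\mathbf{u}}(t)\|\le\|\mathbf{u}(t)-\mathbf{u}_n(t)\|+\|\mathbf{u}_n(t)-\bar{\mathbf{u}}_n(t)\|+\|\bar{\mathbf{u}}_n(t)-\bar{\mathbf{u}}(t)\|,
\]
where Theorem~\ref{thm:main2} handles the first term, the standing hypothesis handles the third, and the purely discrete middle term is handled by the assumed (asymptotic) stability of $\bar{\mathbf{u}}_n$. As preliminaries I would fix a single probability-one event on which the conclusion of Theorem~\ref{thm:main2} holds for all $T\in\Nset$ and the hypothesis $\|\bar{\mathbf{u}}_n(t)-\bar{\mathbf{u}}(t)\|\to0$ holds for all rational $t$; observe that the discrete initial datum $\mathbf{u}_n(0)$ attached to a continuum solution $\mathbf{u}$ by \eqref{eqn:icd} converges in $L^2(I)$ to $\mathbf{u}(0)$ deterministically, while $\bar{\mathbf{u}}_n(0)\to\bar{\mathbf{u}}(0)$ a.s.\ by the hypothesis at $t=0$; and record the routine upgrade of Theorem~\ref{thm:main2} obtained from a discrete Gronwall estimate, namely that $\mathbf{u}_n(t)\to\mathbf{u}(t)$ uniformly on each $[0,T]$ a.s.\ whenever $\mathbf{u}_n(0)\to\mathbf{u}(0)$ in $L^2(I)$, even when $\mathbf{u}_n(0)$ is not exactly the discretisation of $\mathbf{u}(0)$.

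I would prove (ii) first, as it is the benign direction. Let $\mathbf{u}_n$ issue from $g\in L^2(I)$ lying in the basin of attraction of $\bar{\mathbf{u}}$, and let $\mathbf{u}$ be the solution of \eqref{eqn:csk} with $\mathbf{u}(0)=g$ given by Theorem~\ref{thm:main1}. For each fixed $t\ge0$, combining $\mathbf{u}_n(t)\to\mathbf{u}(t)$ (Theorem~\ref{thm:main2}) with $\bar{\mathbf{u}}_n(t)\to\bar{\mathbf{u}}(t)$ (hypothesis) yields $\lim_{n\to\infty}\|\mathbf{u}_n(t)-\bar{\mathbf{u}}_n(t)\|=\|\mathbf{u}(t)-\bar{\mathbf{u}}(t)\|$ a.s.; since $g$ is in the basin of attraction of $\bar{\mathbf{u}}$, the right-hand side tends to $0$ as $t\to\infty$, which is precisely \eqref{eqn:thm3}. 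The iterated order of limits --- $\lim_{t\to\infty}$ outside $\lim_{n\to\infty}$ --- is exactly what makes the inner limit harmless here.

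For (i) the same decomposition is run in reverse. Fix $\epsilon>0$. By stability of $\bar{\mathbf{u}}_n$ for large $n$ there is $\delta_\star>0$ with $\|\mathbf{u}_n(0)-\bar{\mathbf{u}}_n(0)\|<\delta_\star\Rightarrow\|\mathbf{u}_n(t)-\bar{\mathbf{u}}_n(t)\|<\epsilon/2$ for all $t\ge0$; since $\|\mathbf{u}_n(0)-\bar{\mathbf{u}}_n(0)\|\to\|\mathbf{u}(0)-\bar{\mathbf{u}}(0)\|$, taking $\delta:=\delta_\star/2$ guarantees this whenever $\|\mathbf{u}(0)-\bar{\mathbf{u}}(0)\|<\delta$ and $n$ is large, and then letting $n\to\infty$ at each fixed $t$ gives $\|\mathbf{u}(t)-\bar{\mathbf{u}}(t)\|\le\epsilon/2<\epsilon$, i.e.\ stability of $\mathbf{u}$. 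The asymptotically stable case is the same with the extra input that $\|\mathbf{u}_n(t)-\bar{\mathbf{u}}_n(t)\|\to0$ as $t\to\infty$, whence $\|\mathbf{u}(t)-\bar{\mathbf{u}}(t)\|=\lim_{n\to\infty}\|\mathbf{u}_n(t)-\bar{\mathbf{u}}_n(t)\|\to0$. Alternatively one can argue (i) by contradiction: if $\bar{\mathbf{u}}$ were not stable, there would be solutions $\mathbf{v}^j$ of \eqref{eqn:csk} with $\|\mathbf{v}^j(0)-\bar{\mathbf{u}}(0)\|\to0$ whose first escape times $t_j$ from a fixed $\epsilon_0$-neighbourhood satisfy $t_j\to\infty$ (a bounded subsequence being excluded by continuous dependence on $g$ in Theorem~\ref{thm:main1}), and then a diagonal choice $n=n_j\to\infty$ together with the stability of $\bar{\mathbf{u}}_{n_j}$ on $[0,t_j]$ (via Theorem~\ref{thm:main2} and the Gronwall upgrade) produces the contradiction.

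I expect the main obstacle to lie entirely in part (i): interchanging $\lim_{n\to\infty}$ with the quantifier ``for all $t$'' (respectively with $\lim_{t\to\infty}$). Theorem~\ref{thm:main2} gives convergence only on compact time intervals with no uniformity as the horizon grows, whereas stability is a statement over all $t\ge0$, so the argument above goes through only if the assumed (asymptotic) stability of $\bar{\mathbf{u}}_n$ is used with a stability radius $\delta_\star$ --- and, for asymptotic stability, a uniform time after which the $\epsilon$-estimate holds --- that does not degenerate as $n\to\infty$; this uniformity is the natural meaning of ``stable a.s.\ for $n$ sufficiently large'' and is what one verifies in the concrete Kuramoto models of Sections~3--5. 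Granting it, the rest is bookkeeping: repeated use of the triangle inequality and Gronwall, and the care of restricting all a.s.\ statements to one probability-one event.
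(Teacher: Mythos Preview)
Your proposal is correct and follows essentially the same route as the paper: the triangle inequality
\[
\|\mathbf{u}(t)-\bar{\mathbf{u}}(t)\|\le\|\mathbf{u}(t)-\mathbf{u}_n(t)\|+\|\mathbf{u}_n(t)-\bar{\mathbf{u}}_n(t)\|+\|\bar{\mathbf{u}}_n(t)-\bar{\mathbf{u}}(t)\|
\]
with Theorem~\ref{thm:main2} and the standing hypothesis handling the outer terms, and the stability assumption controlling the middle one. The only stylistic difference is that the paper argues part~(i) purely by contradiction (assume $\bar{\mathbf{u}}$ is not stable, produce $\tau$ with $\|\mathbf{u}(\tau)-\bar{\mathbf{u}}(\tau)\|>3\epsilon$, then derive $\|\mathbf{u}_n(\tau)-\mathbf{u}(\tau)\|>\epsilon$ from the reverse triangle inequality, contradicting Theorem~\ref{thm:main2}), whereas you lead with a direct argument and mention contradiction as an alternative. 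Your explicit acknowledgement of the uniformity-in-$n$ issue for $\delta_\star$ is a point the paper's proof leaves implicit; both arguments ultimately rely on it in the same way.
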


\begin{proof}
Let $\epsilon>0$ be sufficiently small.
We begin with part~(i).

Suppose that $\bar{\mathbf{u}}_n(t)$ is stable a.s. for $n>0$ sufficiently large
 but $\bar{\mathbf{u}}(t)$ is not.
When $n>0$ is sufficiently large, we have
\begin{equation}
\|\bar{\mathbf{u}}_n(t)-\bar{\mathbf{u}}(t)\|<\epsilon\quad\text{a.s.}
\label{eqn:thm3a}
\end{equation}
for any $t\in[0,\infty)$.
We choose $n,T>0$ sufficiently large and take $\mathbf{u}(0)=\mathbf{u}_n(0)$ such that 
\[
\|\mathbf{u}_n(\tau)-\bar{\mathbf{u}}_n(\tau)\|<\epsilon\quad\text{a.s.},\quad
\|\mathbf{u}(\tau)-\bar{\mathbf{u}}(\tau)\|>3\epsilon
\]
and
\begin{equation}
\|\mathbf{u}_n(\tau)-\mathbf{u}(\tau)\|<\epsilon\quad\text{a.s.}
\label{eqn:thm3b}
\end{equation}
for some $\tau\in(0,T)$.
Here condition \eqref{eqn:thm3b}  is guaranteed by Theorem~\ref{thm:main2}.
Hence,
\begin{align*}
&
\|\mathbf{u}_n(\tau)-\mathbf{u}(\tau)\|\\
&
\ge\|\mathbf{u}(\tau)-\bar{\mathbf{u}}(\tau)\|
 -\|\mathbf{u}_n(\tau)-\bar{\mathbf{u}}_n(\tau)\|
 -\|\bar{\mathbf{u}}_n(\tau)-\bar{\mathbf{u}}(\tau)\|>\epsilon\quad\text{a.s.},
\end{align*}
which contradicts \eqref{eqn:thm3b}.
Thus, if $\bar{\mathbf{u}}_n(t)$ is stable a.s. for $n>0$ sufficiently large,
 then so is $\bar{\mathbf{u}}(t)$.

Suppose that $\bar{\mathbf{u}}_n(t)$ is asymptotically stable a.s.
 for $n>0$ sufficiently large but $\bar{\mathbf{u}}(t)$ is not.
We take $\mathbf{u}(0)=\mathbf{u}_n(0)$
 such that $\mathbf{u}_n(0)$ is contained
 in the basin of attraction for $\bar{\mathbf{u}}_n(t)$ a.s.,
 and choose $n,T>0$ sufficiently large such that
\[
\|\mathbf{u}_n(T)-\bar{\mathbf{u}}_n(T)\|<\epsilon\quad\text{a.s.},\quad
\|\mathbf{u}(T)-\bar{\mathbf{u}}(T)\|>3\epsilon
\]
and
\begin{equation}
\|\mathbf{u}_n(T)-\mathbf{u}(T)\|<\epsilon\quad\text{a.s.}
\label{eqn:thm3c}
\end{equation}
Since $\|\bar{\mathbf{u}}_n(T)-\bar{\mathbf{u}}(T)\|<\epsilon$ a.s.
 by \eqref{eqn:thm3a},  we have
\begin{align*}
&
\|\mathbf{u}_n(T)-\mathbf{u}(T)\|\\
&
\ge\|\mathbf{u}(T)-\bar{\mathbf{u}}(T)\|
 -\|\mathbf{u}_n(T)-\bar{\mathbf{u}}_n(T)\|
 -\|\bar{\mathbf{u}}_n(T)-\bar{\mathbf{u}}(T)\|>\epsilon\quad\text{a.s.},
\end{align*}
which contradicts \eqref{eqn:thm3c}.
Thus, we obtain part~(i).

We turn to part~(ii).
Suppose that $\bar{\mathbf{u}}(t)$ is asymptotically stable.
When $n>0$ is sufficiently large, we have \eqref{eqn:thm3a} for any $t\in[0,\infty)$.
We take $\mathbf{u}(0)=\mathbf{u}_n(0)$
 such that $\mathbf{u}(0)$ is contained in the basin of attraction for $\bar{\mathbf{u}}(t)$,
 and choose $n,T>0$ sufficiently large such that
\[
\|\mathbf{u}(T)-\bar{\mathbf{u}}(T)\|<\epsilon
\]
and condition~\eqref{eqn:thm3b} holds with $\tau=T$.
Hence,
\begin{align*}
&
\|\mathbf{u}_n(T)-\bar{\mathbf{u}}_n(T)\|\\
&
\le\|\mathbf{u}_n(T)-\mathbf{u}(T)\|
 +\|\mathbf{u}(T)-\bar{\mathbf{u}}(T)\|
 +\|\bar{\mathbf{u}}(T)-\bar{\mathbf{u}}_n(T)\|<3\epsilon\quad\text{a.s.}
\end{align*}
This means part~(ii).
\end{proof}

\begin{rmk}
\label{rmk:2c}
By the contrapositive of Theorem~$\ref{thm:main3}$, under its hypotheses,
 if $\bar{\mathbf{u}}(t)$ is unstable $($resp. not asymptotically stable$)$ in \eqref{eqn:csk},
 then so is $\bar{\mathbf{u}}_n(t)$ in \eqref{eqn:dsk} a.s. for $n>0$ sufficiently large.
On the other hand, if condition~\eqref{eqn:thm3} does not hold,
 then $\bar{\mathbf{u}}(t)$ is not asymptotically stable in \eqref{eqn:csk}.
Note that condition~\eqref{eqn:thm3} holds
 if $\bar{\mathbf{u}}_n(t)$ is asymptotically stable a.s. for $n>0$ sufficiently large
 but it may hold even if not.
Actually, it holds if for any $\epsilon>0$ there exists $T_0>0$ such that for $T>T_0$
\[
\lim_{n\to\infty}\|\mathbf{u}_n(T)-\bar{\mathbf{u}}_n(T)\|<\epsilon\quad\text{a.s.}
\]
even when $\lim_{n\to\infty}\lim_{t\to\infty}\|\mathbf{u}_n(t)-\bar{\mathbf{u}}_n(t)\|\neq 0$ a.s.
\end{rmk}

In examples of Sections~3 and 5,
 we assume that $f(u,t)\equiv 0$, so that Theorem~\ref{thm:main3} does not apply as seen below.
Using modifying the above arguments slightly,
 we can extend the result to such a situation.

Assume that $f(u,t)\equiv 0$.
For $\theta\in\Rset$,
 let $\boldsymbol{\theta}$ represent the constant function $u=\theta$.
If $\bar{\mathbf{u}}_n(t)$ is a solution to the discrete model \eqref{eqn:dsk},
 then so is $\bar{\mathbf{u}}_n(t)+\boldsymbol{\theta}$ for any $\theta\in\Rset$.
Similarly, if $\bar{\mathbf{u}}(t)$ is a solution to the continuum limit \eqref{eqn:csk},
 then so is $\bar{\mathbf{u}}(t)+\boldsymbol{\theta}$ for any $\theta\in\Rset$.
Let $\U_n=\{\bar{\mathbf{u}}_n(t)+\boldsymbol{\theta}\mid\theta\in\Rset\}$
 and $\U=\{\bar{\mathbf{u}}(t)+\boldsymbol{\theta}\mid\theta\in\Rset\}$
 denote the families of solutions to \eqref{eqn:dsk} and \eqref{eqn:csk}, respectively.
Recall that $\U_n$ (resp. $\U$) is called \emph{stable}
 if solutions starting in its (smaller) neighborhood
 remain in its (larger) neighborhood for $t\ge 0$,
 and \emph{asymptotically stable} if $\U_n$ (resp. $\U$) is stable
 and the distance between such solutions and $\U_n$ (resp. $\U$) converges to zero
 as $t\to\infty$.

\begin{thm}
\label{thm:main4}
Suppose that the hypotheses of Theorem~$\ref{thm:main3}$ hold and $f(u,t)\equiv 0$.
Then the following hold$\,:$
\begin{enumerate}
\setlength{\leftskip}{-1.8em}
\item[\rm(i)]
If $\U_n$ is stable $($resp. asymptotically stable$)$ a.s. for $n>0$ sufficiently large,
 then $\U$ is also stable $($resp. asymptotically stable$);$
\item[\rm(ii)]
If $\U$ is asymptotically stable, then
\begin{equation}
\lim_{t\to\infty}\lim_{n\to\infty}\inf_{\theta\in\Rset}
 \|\mathbf{u}_n(t)-\bar{\mathbf{u}}_n(t)-\boldsymbol\theta\|=0\quad\text{a.s.},
\label{eqn:thm4}
\end{equation}
where $\mathbf{u}_n(t)$ is any solution to \eqref{eqn:dsk}
 such that $\mathbf{u}_n(0)$ is contained in the basin of attraction for $\U$.
\end{enumerate}
\end{thm}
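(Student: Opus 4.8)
The plan is to transcribe the proof of Theorem~\ref{thm:main3} almost verbatim, replacing the scalar distance $\|\mathbf{u}(t)-\bar{\mathbf{u}}(t)\|$ by the distance to the orbit,
\[
\mathrm{dist}(\mathbf{u}(t),\U):=\inf_{\theta\in\Rset}\|\mathbf{u}(t)-\bar{\mathbf{u}}(t)-\boldsymbol\theta\|,
\]
and similarly $\mathrm{dist}(\mathbf{u}_n(t),\U_n)$, so that $\U$ (resp.\ $\U_n$) is stable precisely when $t\mapsto\mathrm{dist}(\mathbf{u}(t),\U)$ stays small whenever it starts small, and asymptotically stable when it additionally tends to $0$; recall that the translation invariance making $\U$, $\U_n$ families of solutions is exactly what $f\equiv 0$ buys us. The one extra elementary fact I would record first is
\[
\bigl|\mathrm{dist}(a,\U)-\mathrm{dist}(b,\U)\bigr|\le\|a-b\|\qquad(a,b\in L^2(I)),
\]
which follows by taking infima in $\|b-\bar{\mathbf{u}}(t)-\boldsymbol\theta\|\le\|a-b\|+\|a-\bar{\mathbf{u}}(t)-\boldsymbol\theta\|$; this is what lets the triangle-inequality chains of Theorem~\ref{thm:main3} go through with $\mathrm{dist}(\cdot,\U)$ in place of $\|\cdot-\bar{\mathbf{u}}(t)\|$.

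Part~(i): argue by contradiction as in Theorem~\ref{thm:main3}(i). Fix small $\epsilon>0$; by the hypothesis of Theorem~\ref{thm:main3} we have $\|\bar{\mathbf{u}}_n(t)-\bar{\mathbf{u}}(t)\|<\epsilon$ a.s.\ for all $t$ once $n$ is large. If $\U_n$ is stable a.s.\ for $n$ large but $\U$ is not, choose $T$ large and a continuum solution $\mathbf{u}(t)$ with $\mathrm{dist}(\mathbf{u}(0),\U)$ arbitrarily small yet $\mathrm{dist}(\mathbf{u}(\tau),\U)>3\epsilon$ for some $\tau\in(0,T)$; put $\mathbf{u}_n(0)=\mathbf{u}(0)$ and invoke Theorem~\ref{thm:main2} to get $\|\mathbf{u}_n(\tau)-\mathbf{u}(\tau)\|<\epsilon$ a.s.\ for $n$ large. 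Then $\mathrm{dist}(\mathbf{u}_n(0),\U_n)\le\mathrm{dist}(\mathbf{u}(0),\U)+\|\bar{\mathbf{u}}(0)-\bar{\mathbf{u}}_n(0)\|$ is small, whereas the displayed inequality above gives
\[
\mathrm{dist}(\mathbf{u}_n(\tau),\U_n)\ge\mathrm{dist}(\mathbf{u}(\tau),\U)-\|\mathbf{u}_n(\tau)-\mathbf{u}(\tau)\|-\|\bar{\mathbf{u}}_n(\tau)-\bar{\mathbf{u}}(\tau)\|>\epsilon\quad\text{a.s.},
\]
contradicting the stability of $\U_n$. The asymptotically stable case is the same, with $\tau$ replaced by a large time $T$ at which $\mathrm{dist}(\mathbf{u}(T),\U)>3\epsilon$ (such $T$ exist since $\U$ fails to be asymptotically stable) and $\mathbf{u}(0)$ chosen in the basin of $\U_n$.

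Part~(ii): suppose $\U$ is asymptotically stable and take $\mathbf{u}_n(0)=\mathbf{u}(0)$ in the basin of attraction of $\U$. Given $\epsilon>0$, choose $T$ so large that $\mathrm{dist}(\mathbf{u}(T),\U)<\epsilon$; for $n$ large, $\|\mathbf{u}_n(T)-\mathbf{u}(T)\|<\epsilon$ a.s.\ (Theorem~\ref{thm:main2}) and $\|\bar{\mathbf{u}}_n(T)-\bar{\mathbf{u}}(T)\|<\epsilon$ a.s., so
\[
\mathrm{dist}(\mathbf{u}_n(T),\U_n)\le\|\mathbf{u}_n(T)-\mathbf{u}(T)\|+\mathrm{dist}(\mathbf{u}(T),\U)+\|\bar{\mathbf{u}}(T)-\bar{\mathbf{u}}_n(T)\|<3\epsilon\quad\text{a.s.}
\]
Hence $\limsup_{n\to\infty}\inf_{\theta\in\Rset}\|\mathbf{u}_n(T)-\bar{\mathbf{u}}_n(T)-\boldsymbol\theta\|\le 3\epsilon$ a.s.\ for every $T$ beyond some $T_0(\epsilon)$, and sending $T\to\infty$ and then $\epsilon\to 0$ gives \eqref{eqn:thm4}. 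The main obstacle is not conceptual but organizational: one must keep the infimum over $\theta$ on the correct side of each triangle inequality (the lower bound on $\mathrm{dist}(\mathbf{u}_n,\U_n)$ in part~(i) is where this matters) and ensure that all ``a.s.'' statements can be referred to a single null set---which is harmless since only countably many values of $\epsilon$, $T$ and $n$ enter. Aside from that, everything is a line-by-line adaptation of the proof of Theorem~\ref{thm:main3}, now using $\mathrm{dist}(\cdot,\U)$ and $\mathrm{dist}(\cdot,\U_n)$, with the translation invariance under $f\equiv 0$ used only to guarantee that $\U$ and $\U_n$ are genuine families of solutions so that the notions of stability in the statement make sense.
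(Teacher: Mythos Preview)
Your proposal is correct and follows essentially the same route as the paper: both adapt the proof of Theorem~\ref{thm:main3} by replacing $\|\cdot-\bar{\mathbf{u}}(t)\|$ with the distance to the translated family, and both use the same triangle-inequality chains combining $\|\bar{\mathbf{u}}_n-\bar{\mathbf{u}}\|<\epsilon$ with Theorem~\ref{thm:main2}. The only cosmetic difference is that the paper works with an explicit near-minimizer $\theta_n$ (writing ``for some $\theta_n$ and any $\theta$'') and derives its contradiction against the bound $\|\mathbf{u}_n(\tau)-\mathbf{u}(\tau)\|<\epsilon$, whereas you package the same estimates via $\mathrm{dist}(\cdot,\U)$ and contradict the stability of $\U_n$ directly; your explicit recording of the $1$-Lipschitz property of $\mathrm{dist}(\cdot,\U)$ is a tidy addition the paper leaves implicit.
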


\begin{proof}
We proceed as in the proof of Theorem~\ref{thm:main3} with some modifications.
Let $\epsilon>0$ be sufficiently small.
We begin with part~(i).

Suppose that $\U_n$ is stable a.s. for  $n>0$ sufficiently large but $\U$ is not.
We choose $n,T>0$ sufficiently large and take $\mathbf{u}(0)=\mathbf{u}_n(0)$
 such that for some $\theta_n\in\Rset$ and any $\theta\in\Rset$
\[
\|\mathbf{u}_n(\tau)-\bar{\mathbf{u}}_n(\tau)-\boldsymbol{\theta}_n\|<\epsilon
 \quad\text{a.s.},\quad
\|\mathbf{u}(\tau)-\bar{\mathbf{u}}(\tau)-\boldsymbol{\theta}\|>3\epsilon
\]
and
\begin{equation}
\|\mathbf{u}_n(\tau)-\mathbf{u}(\tau)\|<\epsilon\quad\text{a.s.}
\label{eqn:thm4a}
\end{equation}
for some $\tau\in(0,T)$.
Since $\|\bar{\mathbf{u}}_n(\tau)-\bar{\mathbf{u}}(\tau)\|<\epsilon$ a.s.
 by \eqref{eqn:thm3a}, we have
\begin{align*}
\|\mathbf{u}_n(\tau)-\mathbf{u}(\tau)\|
\ge&\|\mathbf{u}(\tau)-\bar{\mathbf{u}}(\tau)-\boldsymbol{\theta}_n\|\\
& -\|\mathbf{u}_n(\tau)-\bar{\mathbf{u}}_n(\tau)-\boldsymbol{\theta}_n\|
 -\|\bar{\mathbf{u}}_n(\tau)-\bar{\mathbf{u}}(\tau)\|>\epsilon\quad\text{a.s.},
\end{align*}
which contradicts \eqref{eqn:thm4a}.
Thus, if $\U_n$ is stable a.s. for  $n>0$ sufficiently large, then so is $\U$.

Suppose that $\U_n$ is asymptotically stable a.s. for $n>0$ sufficiently large
 but $\U$ is not.
We take $\mathbf{u}(0)=\mathbf{u}_n(0)$
 such that $\mathbf{u}_n(0)$ is contained in the basin of attraction for $\U_n$ a.s.,
 and choose $n,T>0$ sufficiently large such that for some $\theta_n\in\Rset$
 and any $\theta\in\Rset$
\[
\|\mathbf{u}_n(T)-\bar{\mathbf{u}}_n(T)-\boldsymbol{\theta}_n\|<\epsilon
 \quad\text{a.s.},\quad
\|\mathbf{u}(T)-\bar{\mathbf{u}}(T)-\boldsymbol{\theta}\|>3\epsilon
\]
and
\begin{equation}
\|\mathbf{u}_n(T)-\mathbf{u}(T)\|<\epsilon\quad\text{a.s.}
\label{eqn:thm4b}
\end{equation}
Since $\|\bar{\mathbf{u}}_n(T)-\bar{\mathbf{u}}(T)\|<\epsilon$ a.s.
 by \eqref{eqn:thm3a},
 for some $\theta_n\in\Rset$ we have
\begin{align*}
\|\mathbf{u}_n(T)-\mathbf{u}(T)\|
\ge&\|\mathbf{u}(T)-\bar{\mathbf{u}}(T)-\boldsymbol{\theta}_n\|\\
& -\|\mathbf{u}_n(T)-\bar{\mathbf{u}}_n(T)-\boldsymbol{\theta}_n\|
 -\|\bar{\mathbf{u}}_n(T)-\bar{\mathbf{u}}(T)\|>\epsilon\quad\text{a.s.},
\end{align*}
which yields a contradiction.
Thus, we obtain part~(i).

We turn to part~(ii).
Suppose that $\U$ is asymptotically stable.
When $n>0$ is sufficiently large, we have \eqref{eqn:thm4a} for any $\tau\in[0,\infty)$.
We take $\mathbf{u}(0)=\mathbf{u}_n(0)$
 such that $\mathbf{u}(0)$ is contained in the basin of attraction for $\U$,
 and choose $n,T>0$ sufficiently large such that
\[
\|\mathbf{u}(T)-\bar{\mathbf{u}}(T)-\boldsymbol{\theta}\|<\epsilon\quad\text{a.s.}
\]
for some $\theta\in\Rset$ and condition~\eqref{eqn:thm4b} holds.
Hence, for some $\theta\in\Rset$
\begin{align*}
\|\mathbf{u}_n(T)-\bar{\mathbf{u}}_n(T)-\boldsymbol{\theta}\|
\le&\|\mathbf{u}_n(T)-\mathbf{u}(T)\|
 +\|\mathbf{u}(T)-\bar{\mathbf{u}}(T)-\boldsymbol{\theta}\|\\
&
 +\|\bar{\mathbf{u}}(T)-\bar{\mathbf{u}}_n(T)-\boldsymbol{\theta}\|<3\epsilon
  \quad\text{a.s.}
\end{align*}
This means part~(ii).
\end{proof}

\begin{rmk}\
\label{rmk:2d}
\begin{enumerate}
\setlength{\leftskip}{-1.8em}
\item[\rm(i)]
Under the hypotheses of Theorem~$\ref{thm:main4}$, we see that
 if $\U$ is unstable $($resp. not asymptotically stable$)$ in \eqref{eqn:csk},
 then so is $\U_n$ in \eqref{eqn:dsk} a.s. for $n>0$ sufficiently large,
 and that if condition~\eqref{eqn:thm4} does not hold,
 then $\U$ is not asymptotically stable in \eqref{eqn:csk},
 as in Remark~$\ref{rmk:2c}$.
\item[\rm(ii)]
In examples of Sections~$3$-$5$,
 the state variables $u_i$, $i\in[n]$, belong to $\Sset^1$ as well as the range of $u$,
 and so does the parameter $\theta$.
\end{enumerate}
\end{rmk}

Henceforth we assume that $W_k(x,y)$, $k\in[m]$, is nonnegative
without loss of generality.
Actually, we can always write $W_k(x,y)=W_k^+(x,y)-W_k^-(x,y)$
for some nonnegative functions $W_k^+(x,y),W_k^-(x,y)$,
so that Eq.~\eqref{eqn:csk} becomes
\begin{align*}
\frac{\partial}{\partial t} u(t,x)
=& f(u(t,x),t)+\sum^{m}_{k=1} \int _{I} W_k^+(x,y) D_{k}(u(t,y)-u(t,x)) dy\\
& -\sum^{m}_{k=1} \int _{I} W_k^-(x,y) D_{k}(u(t,y)-u(t,x)) dy.
\end{align*}
Similarly, we also assume that
any element $w_{ij}^{kn}$ of the weight matrix $W(G_{kn})$
is nonnegative for $k\in[m]$, $n\in\Nset$ and $i,j\in[n]$ in \eqref{eqn:dsk}.
We remark that such a treatment
requires analysis of coupled oscillator networks depending on multiple graphs,
contrary to a statement given at the end of Section 2 of \cite{M19}.


\section{Kuramoto model with multiple natural frequencies}
Our first example is the Kuramoto model with multiple natural frequencies,
\begin{equation}
\frac{d}{dt} u_i^n (t) = \omega_i^n
+ \frac{1}{n \alpha_n} \sum^{n}_{j=1}
w_{ij}^n \sin \left( u_j^n(t) - u_i^n(t) \right),\quad i \in [n],
\label{eqn:dex1}
\end{equation}
and its continuum limit
\begin{equation}
\frac{\partial}{\partial t}u(t,x) = \omega(x)
+ \int_I W(x,y) \sin( u(t,y)-u(t,x) ) dy , \quad x \in I,
\label{eqn:cex1}
\end{equation}
which, respectively, correspond to \eqref{eqn:dskw} and \eqref{eqn:cskw}
with $m=1$, $f(u,t)=0$ and $D(u)=\sin u$.
Here the dependence of the graph on the index $k$ is dropped out.
We also assume that the graphon $W(x,y)$ has the form
\begin{equation}
W(x,y) = H_1(x)H_2(y),
\label{con:1}
\end{equation}
where $H_\ell:I\to\Rset$, 
$\ell=1,2$, are bounded measurable functions and $H_1>0$ on $I$.
The graphon \eqref{con:1} is simple and of rank $1$ \cite{L12}.

\subsection{General case}
We begin with the continuum limit \eqref{eqn:cex1}.
Assume that there exists a constant $C\neq 0$ such that
\begin{equation}
C=\int_I H_2(y)\cos\left(\arcsin\left(\frac{\omega(y)-\Omega}{C H_1(y)}\right)\right)dy.
\label{con:c}
\end{equation}
Then we easily show that the continuum limit \eqref{eqn:cex1} has synchronized solutions given by
\begin{equation}
u(t,x)=\Omega t+U(x)+\theta,\quad
U(x)=\arcsin \left(\frac{\omega(x) - \Omega}{C H_1(x)}\right),
\label{eqn:solcex1}
\end{equation}
where $\theta\in\Sset^1$ is any constant,
the range of the $\arcsin$ function is $[-\frac{1}{2}\pi,\frac{1}{2}\pi]$ and
\begin{align}
\Omega=\int_I \frac{H_2(x) \omega(x)}{H_1(x)} dx
\bigg/\int_I \frac{H_2(x)}{H_1(x)} dx.
\label{eqn:ccex1}
\end{align}
Actually, substituting \eqref{con:1} and \eqref{eqn:solcex1}
into the right-hand-side of \eqref{eqn:cex1}, we obtain
\begin{align}
&
\omega(x)
+ H_1(x) \cos U(x)\,\int_I H_2(y) \sin U(y)dy\notag\\
&\qquad
- H_1(x) \sin U(x)\,\int_I H_2(y) \cos U(y)dy=\Omega
\label{eqn:rcex1}
\end{align}
since
\begin{equation}
\int_I H_2(y)\sin U(y)\,dy
=\frac{1}{C}\int_I \frac{H_2(y)}{H_1(y)}(\omega(y)-\Omega)dy=0,
\label{eqn:con2}
\end{equation}
where we have used the relation \eqref{eqn:ccex1}.
This means that Eq.~\eqref{eqn:solcex1} is a solution to \eqref{eqn:cex1}
for any $\theta\in\Sset^1$.
Note that the continuum limit \eqref{eqn:cex1}
 may have a different solution from \eqref{eqn:solcex1}.

We discuss the linear stability
of the synchronized solutions \eqref{eqn:solcex1} to \eqref{eqn:cex1}.
The linearized equation of \eqref{eqn:cex1} around \eqref{eqn:solcex1} is written as
\[
\frac{\partial}{\partial t}u(t,x)=\L u(t,x),
\]
where $\L:L^2(I)\to L^2(I)$ is the linear operator given by
\begin{align}
\L\phi (x)
=&\int_I H_1(x)H_2(y)(\phi(y)-\phi(x))\cos(U(y)-U(x))dy\notag\\
=& H_1(x) \cos U(x)\int_I H_2(y) \phi(y) \cos U(y) dy \notag\\
& +H_1(x) \sin U(x)\int_I H_2(y) \phi(y) \sin U(y) dy \notag\\
& -H_1(x) \phi(x)\cos U(x)\int_I H_2(y) \cos U(y) dy.
\label{eqn:lcex1}
\end{align}
Here the relation \eqref{eqn:con2} has been used.
Note that the linear operator $\L$ is independent of $\theta\in\Sset^1$.

Assume that $\L\phi(x)=0$ for any $x\in\Sset^1$.
Using \eqref{con:c} and \eqref{eqn:lcex1}, we have
\begin{equation}
\phi(x)=c_1+c_2\tan U(x)
\label{eqn:phi}
\end{equation}
since $H_1(x)>0$ on $I$,
 where $c_1,c_2$ are constants given by
\[
c_1=\frac{1}{C}\int_I H_2(y) \phi(y) \cos U(y)dy,\quad
c_2=\frac{1}{C}\int_I H_2(y) \phi(y) \sin U(y)dy=0.
\]
Substituting \eqref{eqn:phi} into \eqref{eqn:lcex1}
 and using \eqref{eqn:con2}, we obtain
\[
c_2\int_I H_2(y)\left( \frac{1}{\cos U(y)} - 2 \cos U(y) \right) dy=0.
\]
Thus, we see that $\mathrm{Ker}(\L)=\Span\{1\}$ if
\begin{equation}
\int_I H_2(y)
\left( \frac{1}{\cos U(y)} - 2 \cos U(y) \right) dy\neq 0,
\label{con:ex}
\end{equation}
and $\mathrm{Ker}(\L)=\Span\{1,\tan U(x)\}$ otherwise.
Note that the eigenspace $\Span\{1\}$ for the zero eigenvalue
 comes from the rotational symmetry, and that
\[
\int_I H_2(y) \left( \frac{1}{\cos U(y)} - 2 \cos U(y) \right) dy < 0
\]
if $H_2(y) \geq 0$ and $|U(x)| \leq \pi/4$.

Suppose that the graphon $W\not\equiv 0$ is nonnegative,
 i.e., $H_1(x),H_2(x)\geq 0$ for any $x\in I$.
We compute
\begin{align}
\langle \L \phi, \phi \rangle
&= \int_{I^2} W(x,y) (\phi(y) - \phi(x)) \phi(x)\cos (U(y)-U(x)) dydx \notag\\
&= \int_{I^2} W(x,y) \phi(y) \phi(x) \cos (U(y)-U(x)) dydx \notag\\
& \quad
- \int_{I^2} W(x,y) \phi(x)^2 \cos (U(y)-U(x)) dydx \notag\\
&= \int_{I^2} W(x,y) \phi(y) \phi(x) \cos (U(y)-U(x)) dxdy \notag\\
& \quad
- \frac{1}{2} \int_{I^2} W(x,y) (\phi(x)^2 + \phi(y)^2) \cos (U(y)-U(x)) dxdy \notag\\
&= - \frac{1}{2} \int_{I^2} W(x,y) (\phi(y)-\phi(x))^2 \cos (U(y)-U(x)) dxdy,
\label{eqn:lcex1a}
\end{align}
where $\langle\cdot,\cdot\rangle$ represents the inner product in $L^2(I)$.
Hence, if
\begin{equation}
|U(x)| \leq \frac{\pi}{4}\quad \mbox{for any $x\in I$},
\label{con:2}
\end{equation}
then $\langle \L \phi, \phi \rangle\le 0$ for any $\phi\in L^2(I)$,
so that the synchronized solutions \eqref{eqn:solcex1} are linearly stable.
Moreover, if $H_2(x)\geq 0$ as well as condition \eqref{con:2} holds,
then the family of synchronized solutions is asymptotically stable
since $\langle \L \phi, \phi \rangle<0$ for $\phi\not\in\Span\{1\}$.

We turn to the Kuramoto model \eqref{eqn:dex1}
when $G_n$ is deterministic and dense.
We have $\alpha_n=1$ and
\begin{equation}
w_{ij}^n = h_{1n,i} h_{2n,j},
\label{con:1d}
\end{equation}
where
\[
h_{\ell n,i}=n\int_{I_i^n}H_\ell(x)dx,\quad
\ell=1,2.
\]
We see that Eq.~\eqref{eqn:dex1} has synchronized solutions given by
\begin{equation}
u_i^n(t)= \Omega_D t+U_i+\theta,\quad
U_i=\arcsin\left( \frac{\omega_i^n - \Omega_D}{C_D h_{1n,i}} \right),
\label{eqn:soldex1}
\end{equation}
where $\theta\in\Sset^1$ is any constant and
\[
\Omega_D=\sum^{n}_{i=1} \frac{h_{2n,i} \omega_i^n}{h_{1n,i}}
\bigg/\sum^{n}_{i=1} \frac{h_{2n,i}}{h_{1n,i}},\quad
C_D=\frac{1}{n} \sum^{n}_{i=1} h_{2n,i} \cos U_i.
\]
Actually, substituting \eqref{con:1d} and \eqref{eqn:soldex1}
into the right-hand-side of \eqref{eqn:dex1}, we have
\begin{align*}
&
\omega_i^n+
\frac{1}{n}\biggl(h_{1n,i}\cos U_i(t)\sum^{n}_{j=1}h_{2n,j}\sin U_j(t)\\
&\qquad
-h_{1n,i}\sin U_i(t)\sum^{n}_{j=1}h_{2n,j}\cos U_j(t)\biggr)=\Omega_D
\end{align*}
since
\begin{align*}
&
\sum^{n}_{i=1} h_{2n,i} \sin U_i
=\frac{1}{C_D}\sum_{i=1}^n\frac{h_{2n,i}}{h_{1n,i}}(\omega_i^n-\Omega_D)=0,\\
&
\frac{1}{n}h_{1n,i}\sin U_i(t)\sum^{n}_{j=1}h_{2n,j}\cos U_j(t)
=\omega_i^n-\Omega_D.
\end{align*}
This means that for any $\theta\in\Sset^1$
Eq.~\eqref{eqn:soldex1} is a solution to \eqref{eqn:dex1}
which exists if and only if for some constant $C_D$
\[
C_D=\frac{1}{n}\sum^{n}_{i=1}h_{2n,i}
\cos\left(\arcsin\left( \frac{\omega_i^n - \Omega_D}{C_D h_{1n,i}} \right) \right)
\]
like \eqref{con:c}.
We also easily see that as $n\to\infty$,
\begin{equation}
U_i\to U(x)\quad\mbox{and}\quad
C_D\to C
\label{eqn:limitex1}
\end{equation}
if $i/n\to x$.
Thus, by \eqref{eqn:limitex1}, the solution \eqref{eqn:soldex1} of \eqref{eqn:dex1}
 converges to the solution \eqref{eqn:solcex1} of \eqref{eqn:cex1}.
Note that the Kuramoto model \eqref{eqn:dex1}
 may have a different of solution from \eqref{eqn:soldex1}.

We discuss the linear stability of the synchronized solutions \eqref{eqn:soldex1} to \eqref{eqn:dex1}.
The Jacobian matirix of \eqref{eqn:dex1} for \eqref{eqn:soldex1} is given by
\begin{equation*}
A=
\begin{pmatrix}
-\displaystyle\sum_{j\neq 1} \frac{w_{1j}^n\cos (U_j - U_1)}{n}
& \displaystyle\frac{w_{12}^n \cos (U_2 -U_1)}{n}
& \displaystyle\cdots & \displaystyle\frac{w_{1n}^n \cos (U_n - U_1)}{n} \\
\displaystyle\frac{w_{21}^n \cos (U_1 - U_2)}{n}
& -\displaystyle\sum_{j\neq 2} \frac{w_{2j}^n \cos (U_j - U_2)}{n}
& \cdots & \displaystyle\frac{w_{2n}^n \cos (U_n - U_2)}{n} \\
\vdots & \displaystyle\vdots & \displaystyle\ddots
& \vdots \\
\displaystyle\frac{w_{n1}^n \cos (U_1 - U_n)}{n}
& \displaystyle\frac{w_{n2}^n \cos (U_2 -U_n)}{n}
& \cdots
& -\displaystyle\sum_{j\neq n} \frac{w_{nj}^n \cos (U_j - U_n)}{n}
\end{pmatrix}.
\end{equation*}
Using Gershgorin's theorem (see, e.g., Theorem~7.2.1 of \cite{GL13}), we have
\[
\biggl|\lambda+\sum_{j\ne i}\frac{w_{ij}^n \cos (U_j - U_i)}{n}\biggr|
\le\sum_{j\ne i}\biggl|\frac{w_{ij}^n\cos (U_j - U_i)}{n}\biggr|,\quad
i\in[n],
\]
where $\lambda$ is any eigenvalue of $A$.
Hence, if
\begin{equation}
|U_i| \leq \frac{\pi}{4},\quad
i \in [n],
\label{con:2d}
\end{equation}
then $A$ has no eigenvalue with a positive real part,
so that the synchronized solutions \eqref{eqn:soldex1} are linearly stable.
Moreover, as in \eqref{eqn:lcex1},
we see that $\mathrm{Ker}(A)=\Span\{\mathbf{1}\}$ if
\begin{equation}
\sum^{n}_{i=1} h_{2n,i} \left( \frac{1}{\cos U_i} - 2 \cos U_i \right) \neq 0,
\label{con:2d'}
\end{equation}
which holds when condition~\eqref{con:2d} holds and $h_{2n,i}\ge 0$ for $i\in[n]$,
 and $\mathrm{Ker}(A)=\Span\{\mathbf{1},\mathbf{e}\}$ otherwise, where
\[
\mathbf{1}=\left(1, 1,\cdots,1\right)^{\mathrm{T}},\quad
\mathbf{e}=\left( \frac{\sin U_1}{\cos U_1}, \frac{\sin U_2}{\cos U_2},
\cdots, \frac{\sin U_n}{\cos U_n} \right)^{\mathrm{T}}.
\]
Thus, if conditions~\eqref{con:2d} and \eqref{con:2d'} hold
as well as $w_{ij}^n \geq 0$ for any $i,j\in[n]$ and $w_{ij}^n>0$ for some $i,j\in[n]$,
then the family of synchronized solutions given by \eqref{eqn:soldex1} is asymptotically stable
since the zero eigenvalue only comes from the rotational symmetry.

From Theorem~\ref{thm:main4} we see that
 if the family of solutions given by \eqref{eqn:soldex1} in \eqref{eqn:dex1}
 is asymptotically stable for $n>0$ stable,
 then so is the family of solutions given by \eqref{eqn:solcex1} in \eqref{eqn:cex1}, and that
 if the family of solutions \eqref{eqn:solcex1} is asymptotically stable,
 then the family of solutions \eqref{eqn:soldex1} satisfies \eqref{eqn:thm4}.
Note that condition~\eqref{eqn:thm4} holds
 if the family of solutions \eqref{eqn:soldex1} is asymptotically stable for $n>0$ sufficiently large.
Thus, the above results for the asymptotic stability of the families of solutions
 \eqref{eqn:solcex1} and \eqref{eqn:soldex1} consist with Theorem~\ref{thm:main4}.

\subsection{Simple three cases}

We concretely set
\begin{equation}
\mbox{$H_1(x)=p,H_2(x)=1$ (i.e., $W(x,y)=p$) and $\omega(x)=a(x-\frac{1}{2})$},
\label{eqn:ex11}
\end{equation}
where $a>0$ and $p\in(0,1]$ are constants,
and treat three cases in which the graph $G_n$ is undirected and deterministic dense,
random dense or random sparse.
We first discuss the continuum limit \eqref{eqn:cex1}.
We have $\Omega=0$ in \eqref{eqn:ccex1},
so that the function $U(x)$ in \eqref{eqn:solcex1} is expressed as
\begin{equation}
U(x)=\arcsin \left( \frac{a \left(x - \frac{1}{2}\right)}{pC} \right),
\label{eqn:ucex11}
\end{equation}
where
\begin{align}
C=&\int_I \cos\left( \arcsin \left( \frac{a \left(x - \frac{1}{2}\right)}{pC} \right) \right) dx\notag\\
=& \frac{pC}{a} \left(\arcsin\left(\frac{a}{2pC} \right)
+\frac{a}{2pC} \sqrt{1- \left( \frac{a}{2pC} \right)^2 } \right).
\label{eqn:acex1'}
\end{align}
The constant $C$ is equivalent to the so-called \emph{order parameter} \cite{K84}.
We rewrite \eqref{eqn:acex1'} as
\begin{equation}
\frac{a}{p}=\arcsin\left(\frac{a}{2pC} \right)
+\frac{a}{2pC} \sqrt{1- \left( \frac{a}{2pC}\right)^2}.
\label{eqn:acex1}
\end{equation}
These expressions give the synchronized solutions \eqref{eqn:solcex1}
to the continuum limit \eqref{eqn:cex1}.
In particular, we have
\[
\max_{x\in I}|U(x)|=\arcsin\left|\frac{a}{2pC}\right|
\]
so that condition~\eqref{con:2} holds
and the family of synchronized solutions given by \eqref{eqn:solcex1}
is asymptotically stable if
\begin{equation}
\left|\frac{a}{2pC}\right|\leq \frac{1}{\sqrt{2}}.
\label{con:2'}
\end{equation}
Since $\varphi(\eta)=\arcsin\eta+\eta\sqrt{1- \eta^2}$ is monotonically increasing on $(0,1)$ and
\[
\lim_{\eta\to 0}\varphi(\eta)=0,\quad
\lim_{\eta\to 1}\varphi(\eta)=\frac{\pi}{2},
\]
we easily see that the synchronized solutions \eqref{eqn:solcex1} exist
 if and only if $a/p\in(0,\pi/2]$.
Moreover, condition~\eqref{con:2'} is equivalent to
\begin{equation}
\frac{a}{p}\le\varphi\left(\frac{1}{\sqrt{2}}\right)=\tfrac{1}{4}\pi+\tfrac{1}{2}.
\label{eqn:cona}
\end{equation}

\begin{figure}[t]
\begin{center}
\includegraphics[scale=0.7]{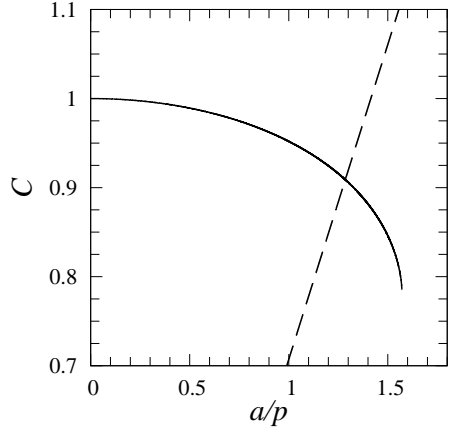}
\caption{Dependence of $C$ on $a/p$.
Condition~\eqref{con:2'} holds in the left side of the dashed line $C=(a/p)/\sqrt{2}$.}
\label{fig:5.1a}
\end{center}
\end{figure}

\begin{figure}[t]
\begin{center}
\includegraphics[scale=0.7]{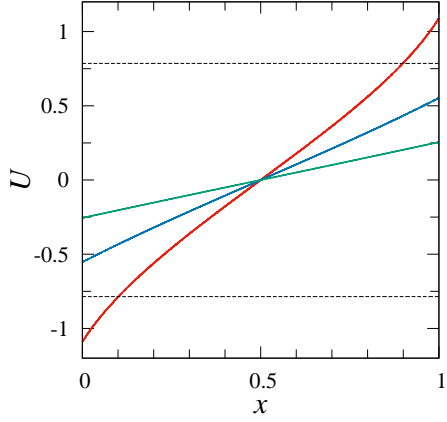}
\caption{Shape of $U(x)$ given by \eqref{eqn:ucex11}.
It is plotted as green, blue and red lines for
$a/p=0.5,1$ and $1.5$, respectively.
The dotted lines represent $U=\pm\pi/4$.}
\label{fig:5.1b}
\end{center}
\end{figure}

Figure~\ref{fig:5.1a} shows the dependence of $C$ on $a/p$,
which is easily computed by the relation \eqref{eqn:acex1}.
In the left side of the dashed line there,
condition~\eqref{con:2'} holds so that
the synchronized solutions \eqref{eqn:solcex1} are asymptotically stable in \eqref{eqn:cex1}.
In Fig.~\ref{fig:5.1b}
the shape of $U(x)$ is plotted as green, blue and red lines
for $a/p=0.5,1$ and $1.5$, respectively.
The values of $a/(2pC)$ were numerically computed
as $0.252716\ldots$, $0.525268\ldots$ and $0.886571\ldots$
for $a/p=0.5,1$ and $1.5$,
respectively.
In the figure we observe that $U(x)$ violates condition~\eqref{con:2'} for $a=1.5$
while it does not for $a/p=0.5,1$,
as predicted in Fig.~\ref{fig:5.1a}.

\subsubsection{Deterministic undirected dense graph}

We turn to the Kuramoto model \eqref{eqn:dex1}.
We begin with a deterministic undirected dense graph with
\begin{equation}
w_{ij}^n=p,\quad
i,j\in[n],
\label{eqn:ex1wa}
\end{equation}
which follows from \eqref{eqn:ddd} and \eqref{eqn:ex11}.
We carried out numerical simulations for the Kuramoto model \eqref{eqn:dex1}
with $n=1000$, $\alpha_n=1$, $p=1$ and $a=1$.
The initial values $u_i^n(0)$, $i\in[n]$, were independently randomly chosen
 according to the uniform distribution on $[-\pi,\pi]$.
This corresponds, for instance, to a situation in which the step function
\[
g(x)=u_i^n(0)\quad\mbox{for $x\in I_i^n$, $i\in[n]$,}
\]
is chosen as the initial condition of the continuum limit \eqref{eqn:cex1}.
We took the initial condition
 since we want to see whether our theoretical prediction is valid for such a general one.

\begin{figure}[t]
\begin{center}
\includegraphics[scale=0.3]{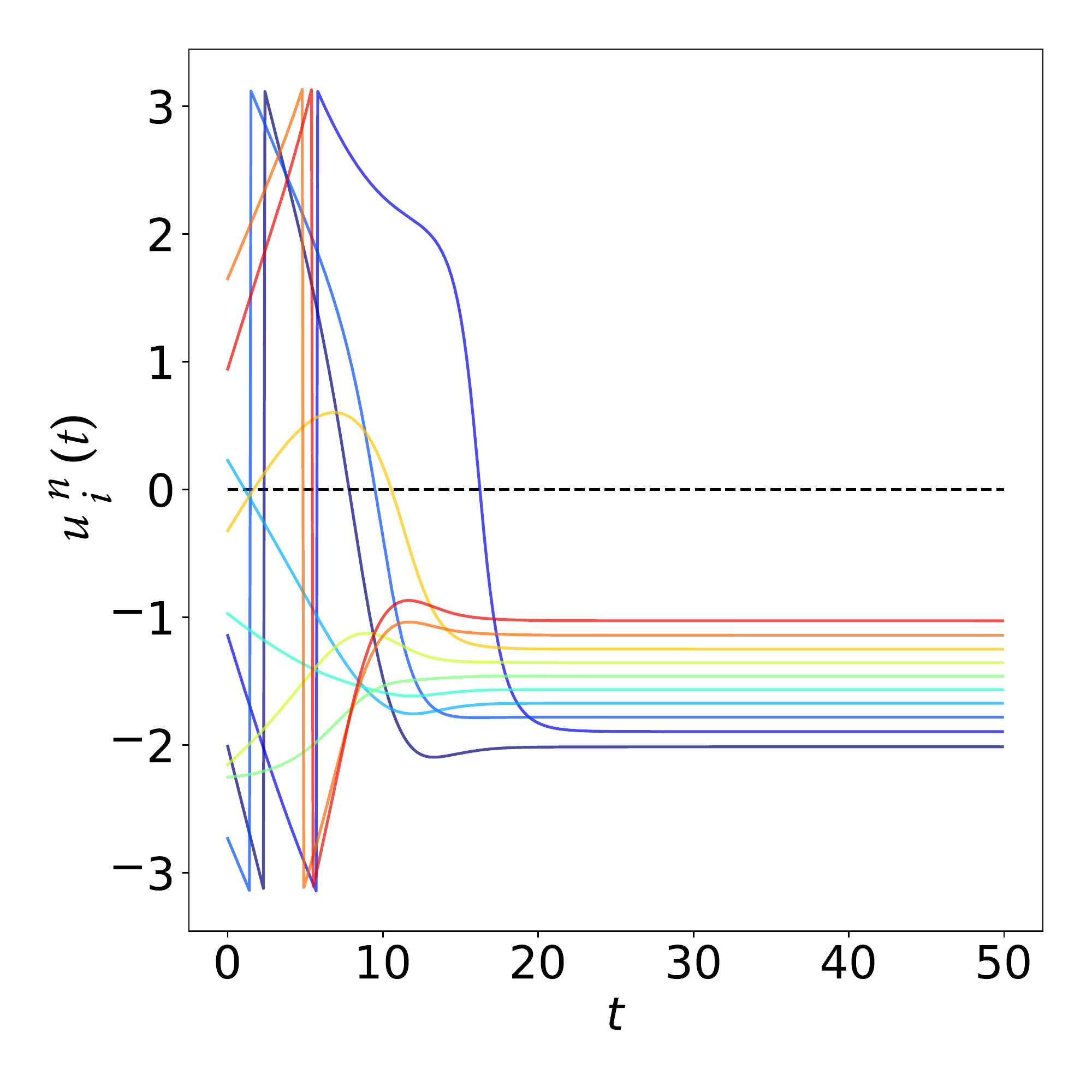}
\caption{Numerical simulation result
of the Kuramoto model \eqref{eqn:dex1} with \eqref{eqn:ex1wa}
for $n=1000$, $\alpha_n=1$, $p=1$ and $a=1$:
The time-history of every 100th node (from 100th to 1000th)
is plotted with a different color.
The nodes are lined up
from bottom to top in the synchronized state.}
\label{fig:5.1c}
\end{center}
\end{figure}

\begin{figure}[t]
\begin{center}
\includegraphics[scale=0.3]{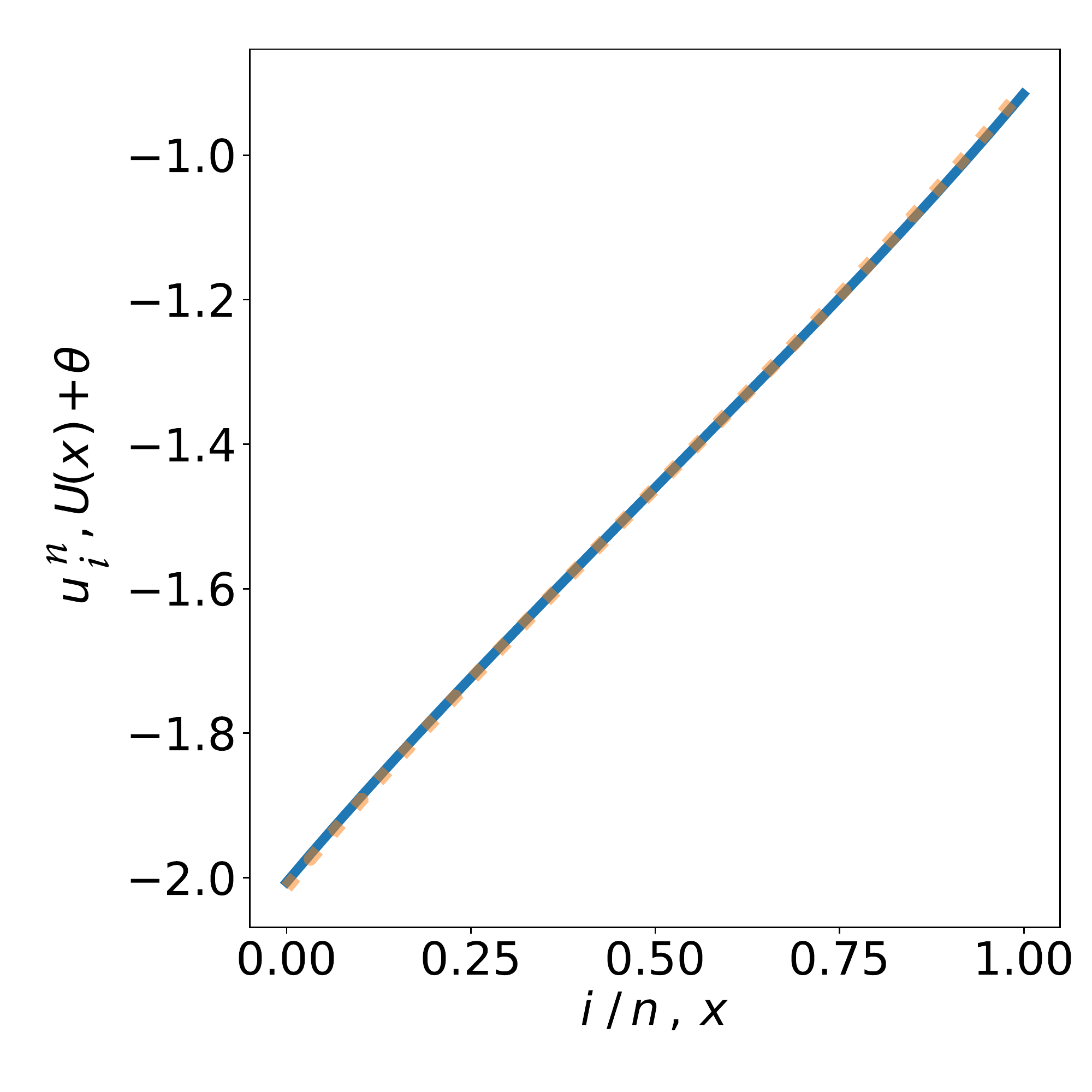}
\caption{Comparison between $u_i^n(50)$, $i\in[n]$,
in the Kuramoto model \eqref{eqn:dex1} with \eqref{eqn:ex1wa}
and the continuum limit synchronized solution \eqref{eqn:solcex1a}
with $a/(2pC)=0.525268\ldots$ and $\theta=-1.460872\ldots$
for $n=1000$, $\alpha_n=1$, $p=1$ and $a=1$.
The former and latter are plotted as orange dotted and blue solid lines, respectively. }
\label{fig:5.1d}
\end{center}
\end{figure}

Figure~\ref{fig:5.1c} shows
the time-history of every 100th node (from 100th to 1000th).
We observe that the response rapidly converges to the synchronized state,
 which is given by
\begin{equation}
u_i^n=\arcsin\biggl(\frac{a(2i-1-n)}{2npC_D}\biggr)+\theta
\label{eqn:soldex1a}
\end{equation}
with $n=1000$, $a/(2pC_D)\approx a/(2pC)=0.525268\ldots$
 from the above theory, where $\theta$ is estimated as
\[
\theta\approx\frac{1}{n}\sum_{i=1}^n u_i^n(50)=-1.460872\ldots
\]
from the numerical result.
We also remark that the response converged
to the synchronized state given by \eqref{eqn:soldex1a} for $a<\pi/2$
even when condition~\eqref{eqn:cona} does not hold.
In Fig.~\ref{fig:5.1d} the response of \eqref{eqn:dex1} at $t=50$ for $a=1$
 is compared with the continuum limit synchronized solution
\begin{equation}
u(x)=\arcsin\biggl(\frac{a(2x-1)}{2pC}\biggr)+\theta,
\label{eqn:solcex1a}
\end{equation}
to which Eq.~\eqref{eqn:soldex1a} converges with $i/n\to x$ as $n\to\infty$,
 with $a/(2pC)=0.525268\ldots$ and $\theta=-1.460872\ldots$.
We see that both coincide almost completely, as predicted theoretically.

\subsubsection{Random 
 dense graph}

\begin{figure}[t]
\begin{center}
\includegraphics[scale=0.26]{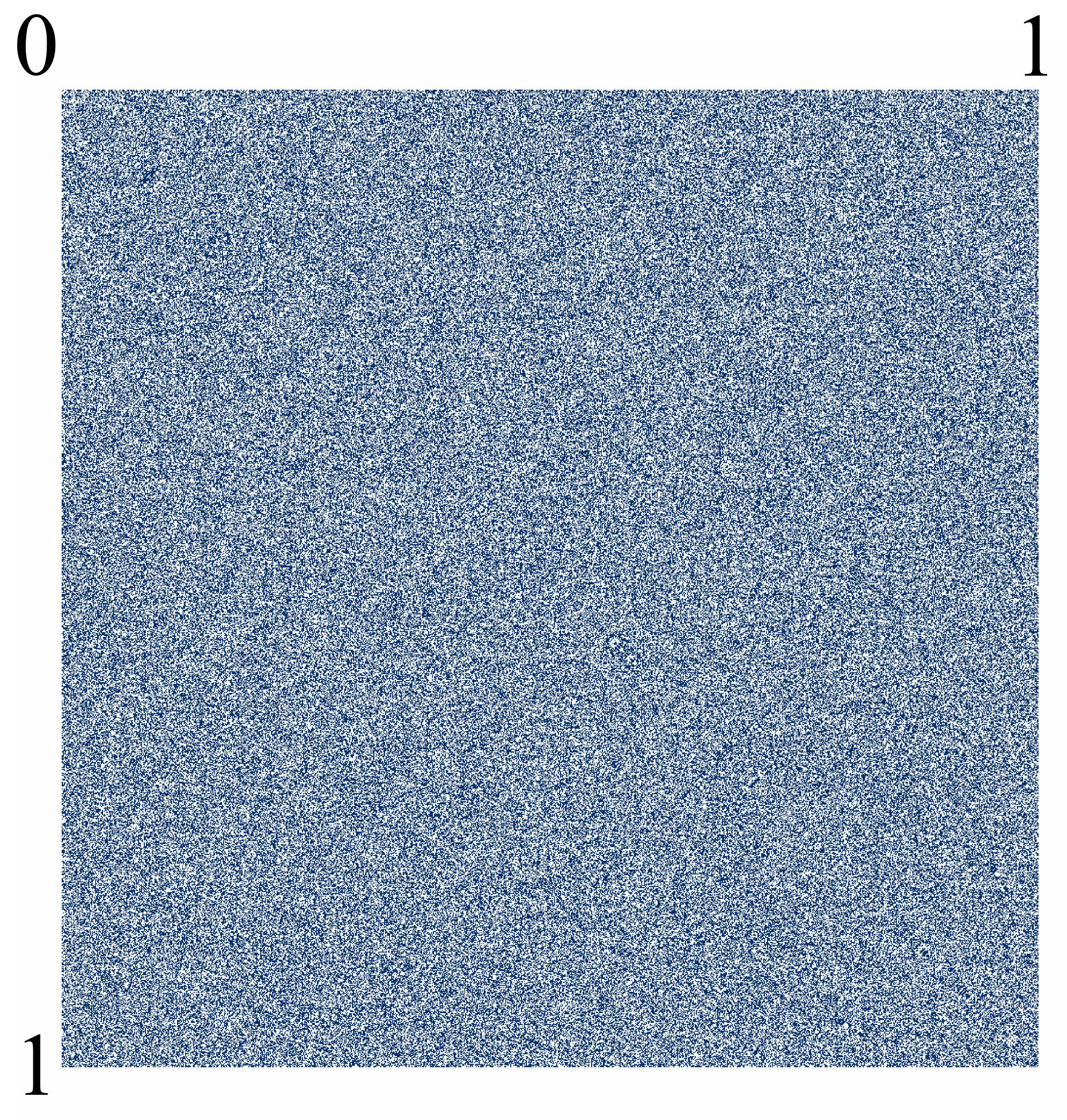}
\caption{Pixel picture of a sampled weight matrix
for the random undirected dense graph
given by $w_{ij}^n=1$, $i,j\in[n]$, with probability \eqref{eqn:ex1wb}
for $n=1000$ and $p=0.5$.
The abscissa and ordinate represent $j/n$ and $i/n$, respectively.
The color of the corresponding pixel is blue if $w_{ij}^n=1$ and it is white otherwise.}
\label{fig:5.1e}
\end{center}
\end{figure}

We next consider a random undirected dense graph given by $w_{ij}^n=1$ with probability
\begin{equation}
\Pset(j\sim i)=p,\quad
i,j\in[n],
\label{eqn:ex1wb}
\end{equation}
which follows from \eqref{eqn:ddr} and \eqref{eqn:ex11}.
Figure~\ref{fig:5.1e} represents the weight matrix
for a numerically computed sample of the random undirected dense graph
with $n=1000$ and $p=0.5$.
We carried out numerical simulations
for the Kuramoto model \eqref{eqn:dex1}
with the weight matrix displayed in Fig.~\ref{fig:5.1e}
for $n=1000$, $\alpha_n=1$ and $a=0.5$.
The initial values $u_i^n(0)$, $i\in[n]$, were independently randomly chosen on $[-\pi,\pi]$.

\begin{figure}[t]
\begin{center}
\includegraphics[scale=0.3]{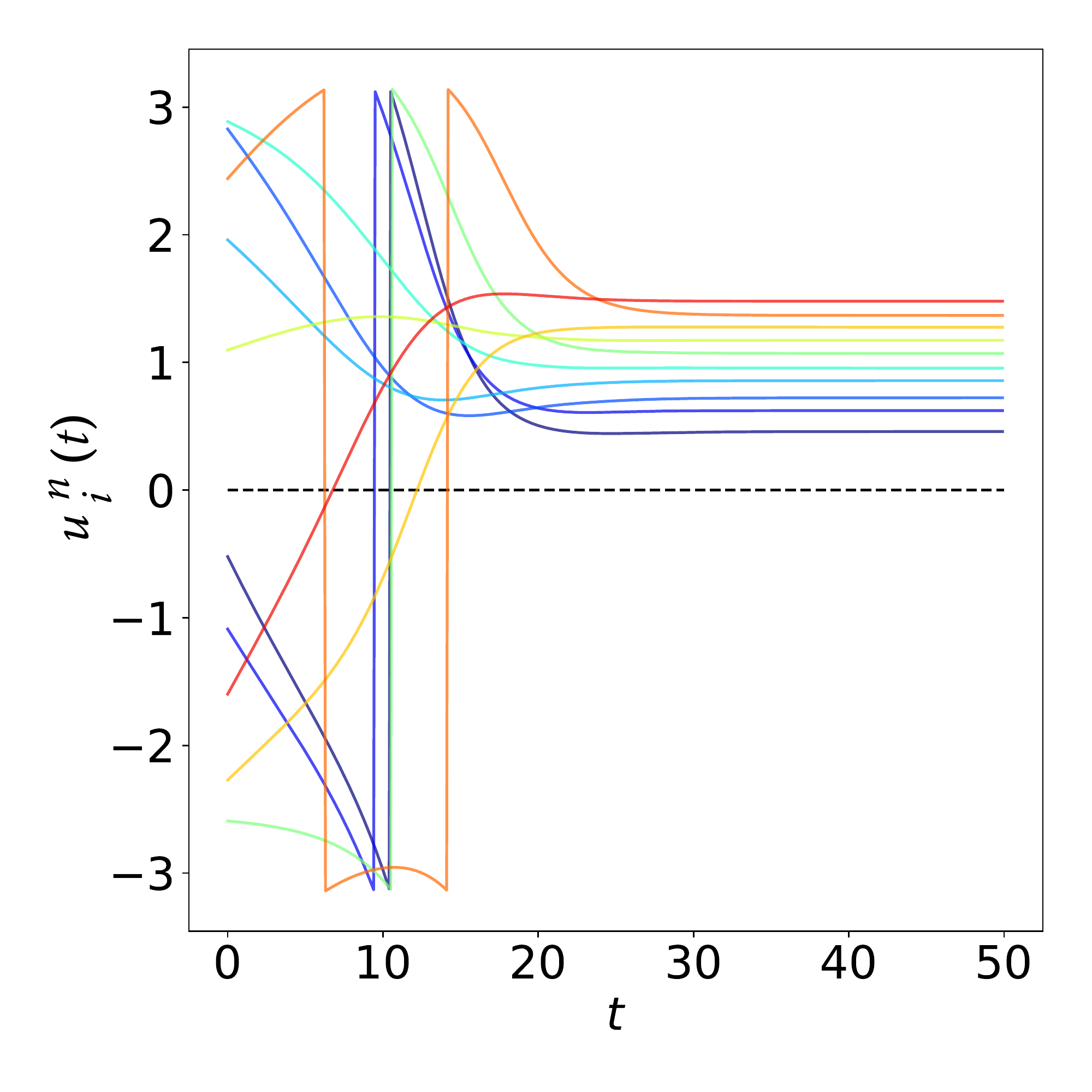}
\caption{Numerical simulation result
of the Kuramoto model \eqref{eqn:dex1}
with the weight matrix displayed in Fig.~\ref{fig:5.1e}
for $n=1000$, $\alpha_n=1$, $p=0.5$ and $a=0.5$.
See also the caption of Fig.~\ref{fig:5.1c}.}
\label{fig:5.1f}
\end{center}
\end{figure}

\begin{figure}[t]
\begin{center}
\includegraphics[scale=0.3]{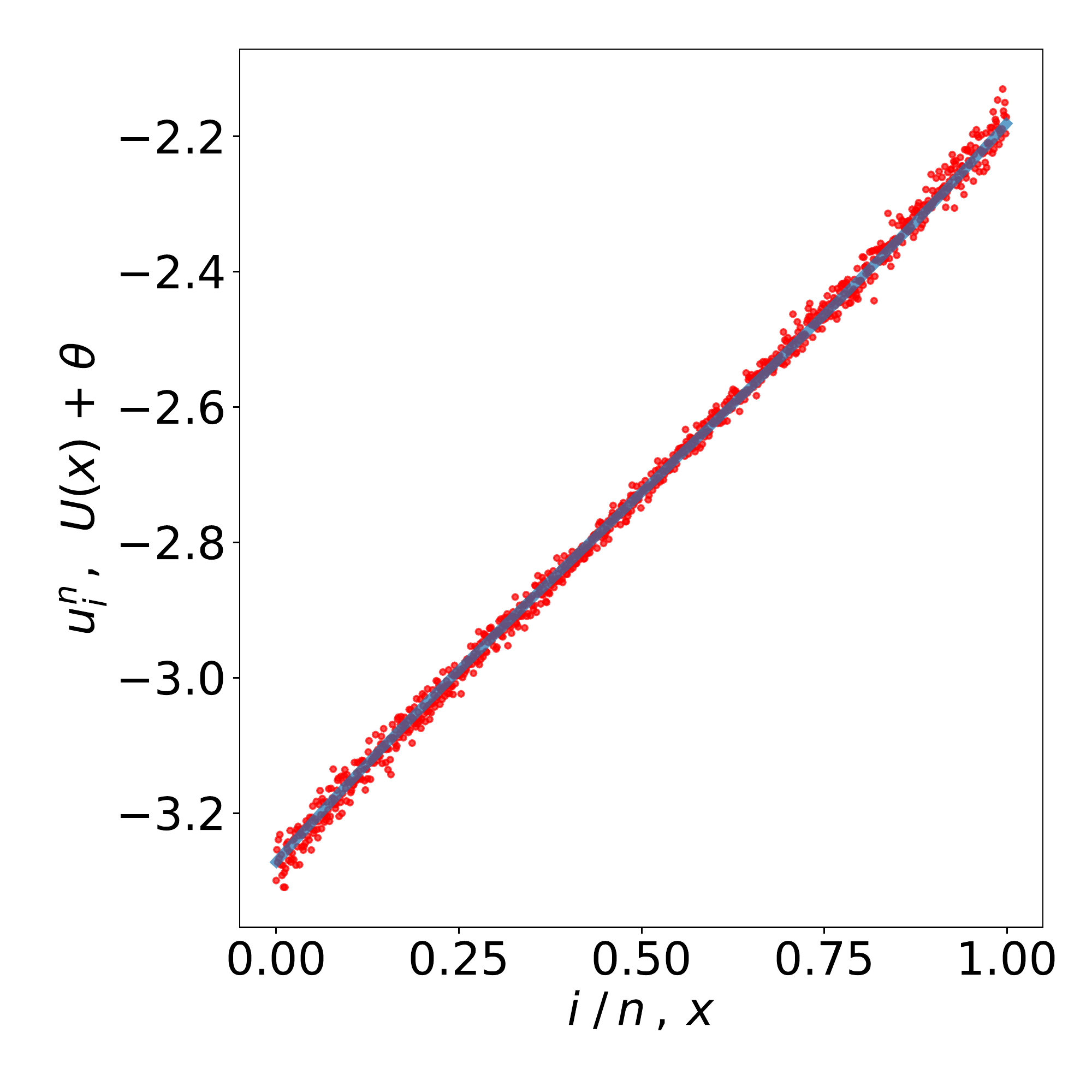}
\caption{Comparison between $u_i^n(50)$, $i\in[n]$,
 in the Kuramoto model \eqref{eqn:dex1} with \eqref{eqn:ex1wb}
 and the continuum limit synchronized solution \eqref{eqn:solcex1a}
 with $a/(2pC)=0.525268\ldots$ and $\theta=1.059373\ldots$ for $a=0.5$ and $n=1000$.
 The former and latter are plotted as small orange disks and a blue line, respectively. }
\label{fig:5.1g}
\end{center}
\end{figure}

Figure~\ref{fig:5.1f} shows the time-history of every 100th node.
We observe that the response rapidly converges to the synchronized state,
which is given by \eqref{eqn:soldex1a}
 with $a/(2pC_D)\approx a/(2pC)=0.525268\ldots$
 from the above theory, where
\[
\theta\approx{\color{black}\frac{1}{n}}\sum_{i=1}^n u_i^n(50)=1.059373\ldots
\]
from the numerical result.
In Fig.~\ref{fig:5.1g} the response of \eqref{eqn:dex1} at $t=50$ for $a=1$
 is compared with the continuum limit synchronized solution \eqref{eqn:solcex1a}
with $a/(2pC)\color{black}=0.525268\ldots$ and $\theta=1.059373\ldots$.
We see that their agreement is good, as predicted theoretically,
 although small fluctuations due to randomness are found.

\begin{figure}[t]
\begin{center}
\includegraphics[scale=0.6]{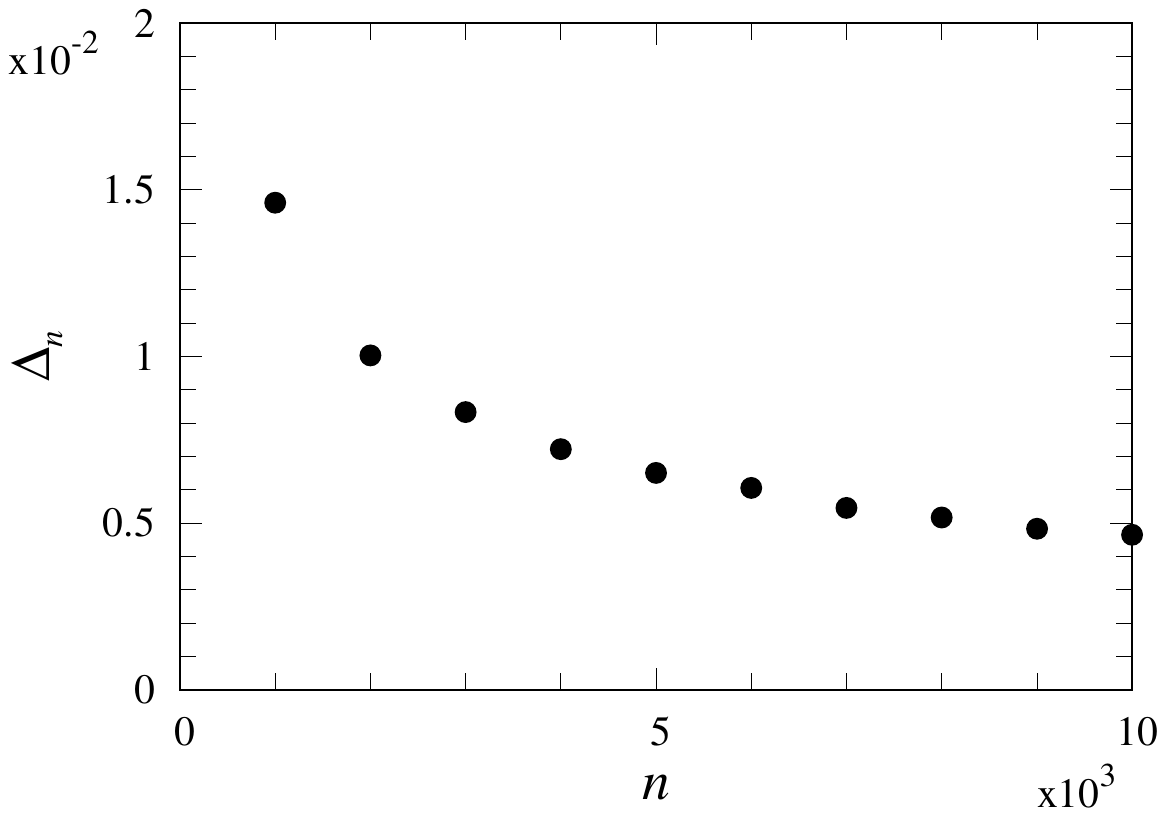}
\caption{Dependence of the approximate convergence error $\Delta_n$
 on the node number $n$ in the Kuramoto model \eqref{eqn:dex1} with \eqref{eqn:ex1wb}
 for $a=0.5$ and $T=100$. }
\label{fig:5.1g'}
\end{center}
\end{figure}

Figure~\ref{fig:5.1g'} shows how the convergence error
 of $\mathbf{u}_n(T)$ to $U(x)+\theta$
 depends on the node number $n$ for $T=100$, where
\begin{align}
\Delta_n^2=& \frac{1}{n}
 \sum_{i=1}^n\left(u_i^n(100)-U\left(\frac{2i-1}{2n}\right)\right)^2\notag\\
& -\biggl(\frac{1}{n}\sum_{i=1}^n\left(u_i^n(100)-U\left(\frac{2i-1}{2n}\right)\right)\biggr)^2,
\label{eqn:error}
\end{align}
which expresses an approximation
 for the $L^2(I)$ convergence error of $\mathbf{u}_n(T)$ as $n\to\infty$ given by
\[
\min_{\theta\in\Sset^1}\int_I \left(\mathbf{u}_n(T)-U(x)-\theta\right)^2 dx
\]
with $T>0$ sufficiently large.
Here the initial values $u_i^n(0)$, $i\in[n]$, were independently randomly chosen
according to the uniform distribution on $[-\pi,\pi]$ for each $n$.
We observe that the error $\Delta_n$ decreases  as $n$ increases
 even though different initial conditions were taken.

\subsubsection{Random sparse graph}

\begin{figure}[t]
\begin{center}
\includegraphics[scale=0.26]{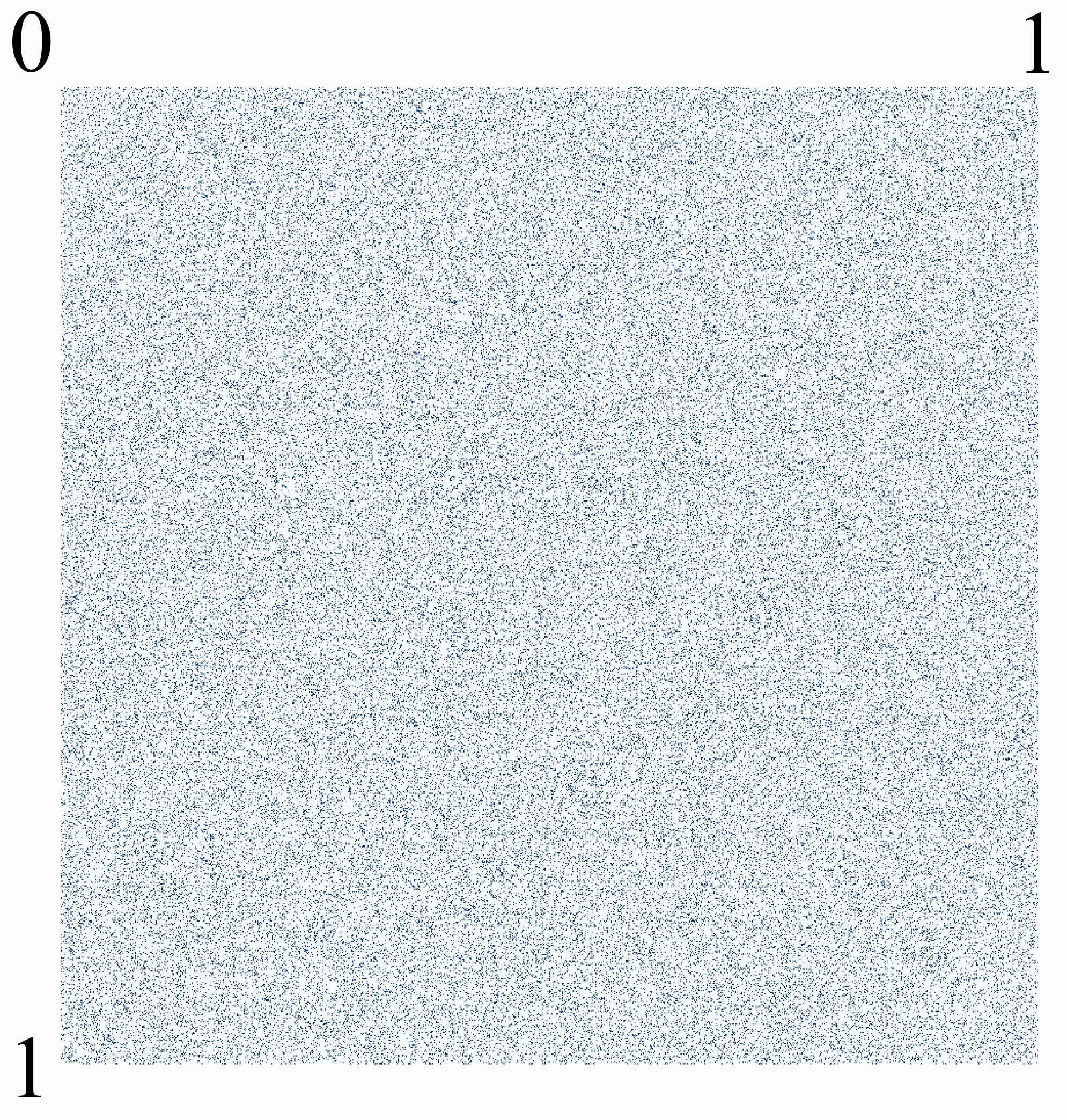}
\caption{Pixel picture of a sampled weight matrix
for the random sparse graph
given by $w_{ij}^n=1$, $i,j\in[n]$, with probability \eqref{eqn:ex1wb}
for $n=1000$ and $p=1$.
See also the caption of Fig.~\ref{fig:5.1e}.}
\label{fig:5.1h}
\end{center}
\end{figure}

We next consider a random undirected sparse graph
given by $w_{ij}^n=1$ with probability
\begin{equation}
\Pset(j\sim i)=n^{-\gamma}p,\quad
i,j\in[n]
\label{eqn:ex1wc}
\end{equation}
which follows from \eqref{eqn:sdr} and \eqref{eqn:ex11} with $\alpha_n=n^{-\gamma}$,
where $\gamma\in(0,0.5)$.
Figure~\ref{fig:5.1h} represents the weight matrix
for a numerically computed sample of the random undirected sparse graph
with $n=1000$, $p=1$ and $\gamma=0.3$, like Fig.~\ref{fig:5.1e}.
We carried out numerical simulations
for the Kuramoto model \eqref{eqn:dex1}
with the weight matrix displayed in Fig.~\ref{fig:5.1h}
for $n=1000$, $\gamma=0.3$ and $a=1$.
The initial values $u_i^n(0)$, $i\in[n]$, were independently randomly chosen on $[-\pi,\pi]$.

\begin{figure}[t]
\begin{center}
\includegraphics[scale=0.3]{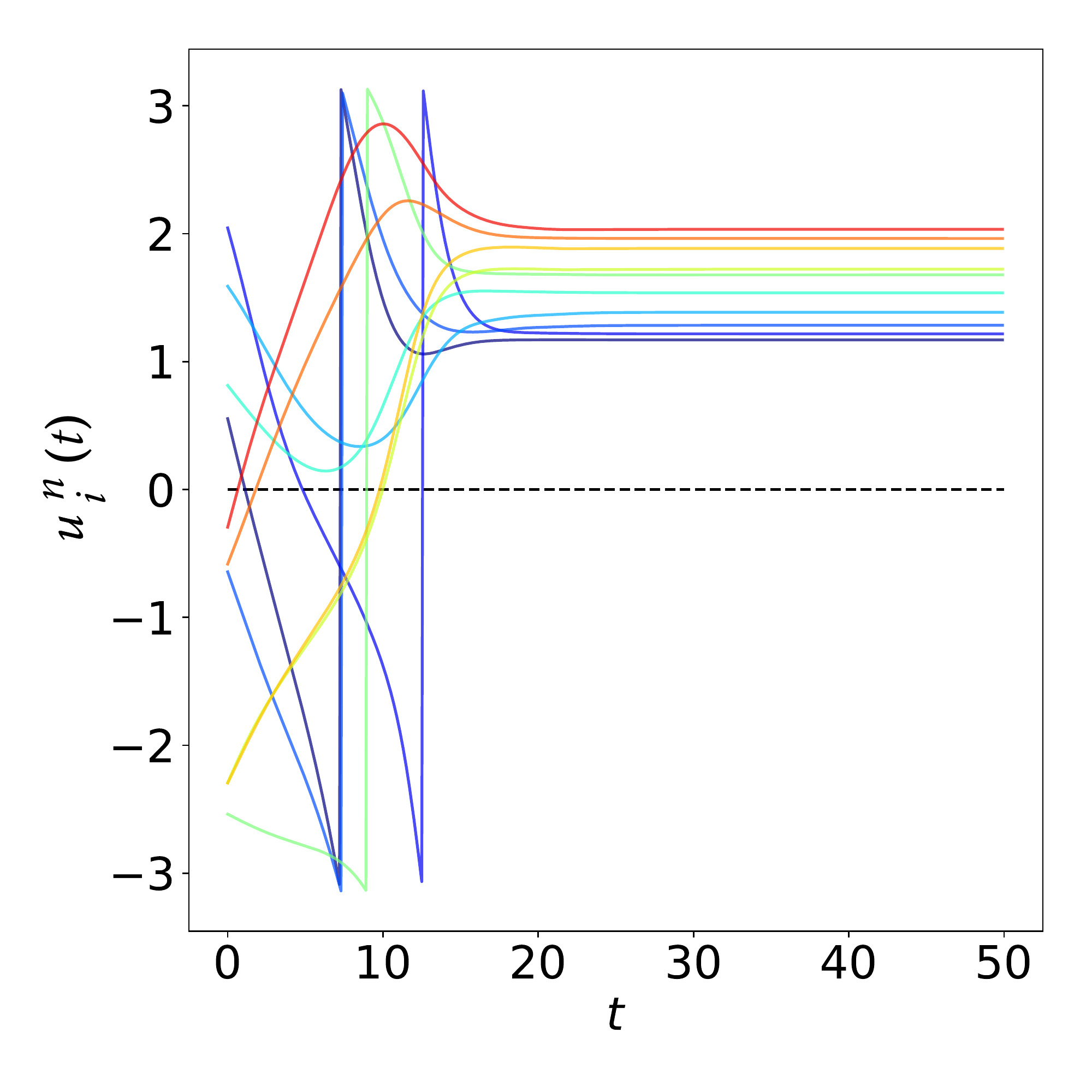}
\caption{Numerical simulation result of the Kuramoto model \eqref{eqn:dex1}
 with the weight matrix displayed in Fig.~\ref{fig:5.1h}
 for $n=1000$, $\gamma=0.3$ and $a=1$.
See also the caption of Fig.~\ref{fig:5.1c}}.
\label{fig:5.1i}
\end{center}
\end{figure}

\begin{figure}[t]
\begin{center}
\includegraphics[scale=0.3]{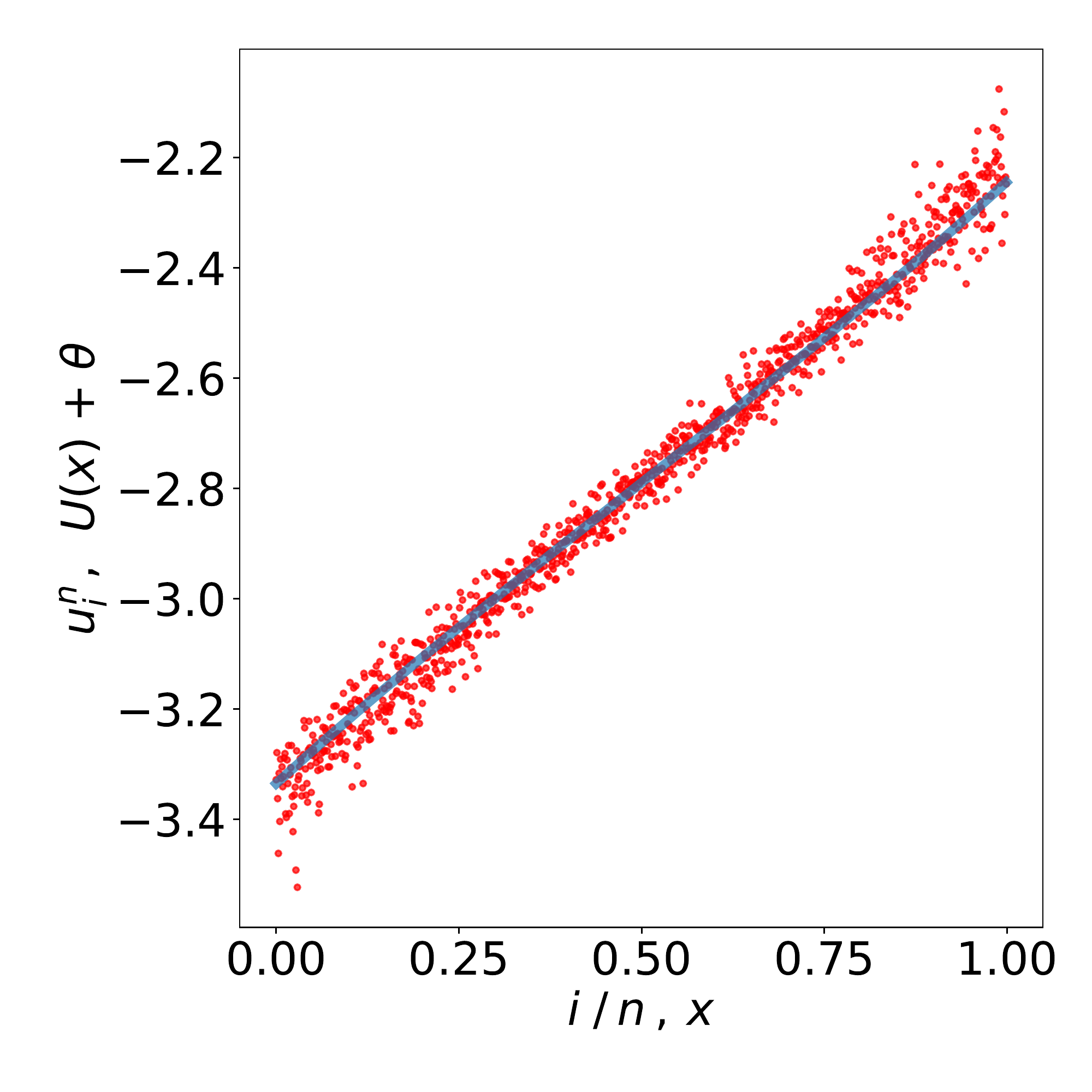}
\caption{Comparison between $u_i^n(50)$, $i\in[n]$,
in the Kuramoto model \eqref{eqn:dex1} with \eqref{eqn:ex1wc}
and the continuum limit synchronized solution \eqref{eqn:solcex1a}
with $a/(2pC)=0.525268\ldots$ and $\theta=1.640532\ldots$
 for $a=1$ and $n=1000$:
The former and latter are plotted as small orange disks and a blue line, respectively. }
\label{fig:5.1j}
\end{center}
\end{figure}

Figure~\ref{fig:5.1i} shows the time-history of every 100th node.
We observe that the response rapidly converges to the synchronized state,
which is given by \eqref{eqn:soldex1a}
with $a/(2pC_D)\approx a/(2pC)=0.525268\ldots$
 from the above theory, where
\[
\theta\approx\frac{1}{n}\sum_{i=1}^n u_i^n(50)=1.640532\ldots
\]
from the numerical result, in Fig.~\ref{fig:5.1i}.
In Fig.~\ref{fig:5.1j} the response of \eqref{eqn:dex1} at $t=50$ for $a=1$
is compared with the continuum limit synchronized solution \eqref{eqn:solcex1a}
with $a/(2pC)=0.525268\ldots$ and $\theta=1.640532\ldots$.
We see that their agreement is good, as predicted theoretically,
 although some fluctuations due to randomness are found.

\begin{figure}[t]
\begin{center}
\includegraphics[scale=0.6]{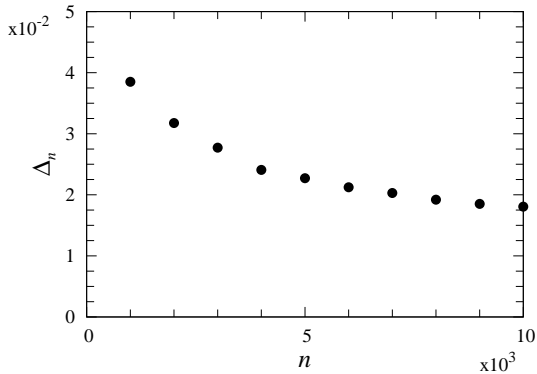}
\caption{Dependence of the approximate convergence error $\Delta_n$
 on the node number $n$ in the Kuramoto model \eqref{eqn:dex1} with \eqref{eqn:ex1wc}
 for $a=1$ and $T=100$. }
\label{fig:5.1j'}
\end{center}
\end{figure}

Figure~\ref{fig:5.1j'} shows the dependence
 of the approximate convergence error $\Delta_n$ given by \eqref{eqn:error}
 on the node number $n$.
Here the initial values $u_i^n(0)$, $i\in[n]$, were independently randomly chosen
 according to the uniform distribution on $[-\pi,\pi]$ for each $n$ as in Fig.~\ref{fig:5.1g'}..
We observe that the error $\Delta_n$ decreases  as $n$ increases
 even though different initial conditions were taken.

\subsection{Determination of network graphs for prescribed desired solutions}

We next discuss a problem for determination of a network graph
 such that the Kuramoto model \eqref{eqn:dex1} exhibits desired synchronized motions
 approximately prescribed by
\begin{equation}
u_i^n(t)=\Omega_0 t+U_0(i/n)+\theta,\quad
\theta\in\Sset^1,\quad
i\in[n],
\label{eqn:solcex12}
\end{equation}
for $n>0$ sufficiently large,
 where $U_0(x)$ is a given function and $\theta\in\Sset^1$ is a constant,
 when the natural frequency $\omega_i^n$ of the $i$th node
 is given by a measurable function $\omega(x)$ through \eqref{eqn:omega} for $i\in[n]$ as in the above.
The synchronized motion is precisely represented by such a formula as \eqref{eqn:soldex1}.
We also assume the following:
\begin{itemize}
\setlength{\leftskip}{-1.6em}
\item[(i)]
There exist two sets $I_\pm\subset I$ with nonzero measures such that
\[
\mbox{$\sin U_0(x)>0$ for $x\in I_+$ and $\sin U_0(x)<0$ for $x\in I_-$;}
\]
\item[(ii)]
$(\omega(x)-\Omega_0)\sin U_0(x)\ge 0$;
\item[(iii)]
$\omega(x)-\Omega_0=0$ if $\sin U_0(x)=0$.
\end{itemize}
From the analysis of Section~3.1 we see that if condition~\eqref{con:2} holds,
 then the family of solutions approximately given by \eqref{eqn:solcex12} is asymptotically stable
 for $n>0$ sufficiently large.

Using \eqref{eqn:solcex1}, we obtain
\begin{equation}
\begin{split}
&
H_1(x)=
\begin{cases}
\displaystyle
\frac{\omega(x)-\Omega_0}{C_0\sin U_0(x)} & \mbox{if $\sin U_0(x)\neq 0$;}\\
0 & \mbox{otherwise},
\end{cases}\\
&
H_2(y)=
\begin{cases}
H_{2+} & \mbox{if $y\in I_+$;}\\
H_{2-} & \mbox{if $y\in I_-$;}\\
0 & \mbox{if $y\notin I_+\cup I_-$}
\end{cases}
\end{split}
\label{eqn:hcex12}
\end{equation}
as a solution to our problem, where
\begin{align*}
&
H_{2+}=-\int_{I-}\sin U_0(y)dy>0,\quad
H_{2-}=\int_{I+}\sin U_0(y)dy>0,\\
&
C_0=H_{2-}\int_{I_-}\cos U_0(y)dy+H_{2+}\int_{I_+}\cos U_0(y)dy.
\end{align*}
Note that the relation
\begin{align*}
\int_I H_2(y)\sin U_0(y)dy=0,
\end{align*}
which guarantees \eqref{eqn:rcex1}, holds.
Thus, depending on whether the network graph $G_n$ is deterministic dense,
random dense or random sparse,
we can obtain the weight matrix $W(G_n)$ from $W(x,y)=H_1(x)H_2(y)$
through \eqref{eqn:ddd}, \eqref{eqn:ddr} or \eqref{eqn:sdr}
 such that the Kuramoto model \eqref{eqn:dex1}
exhibits the desired motion \eqref{eqn:solcex12},
 where for the random dense network,
 the function $D(u)$ and the probability $\Pset(j\to i)$ are replaced
 by $w_{n0}D(u)$ and
\[
\Pset(j\to i)=\frac{1}{w_{n0}}\langle W(x,y)\rangle_{ij}\in[0,1],
\]
respectively, if $\sup_{(x,y)\in I}H_1(x)H_2(y)>1$,
where $w_{n0}>1$ is a normalized constant.

As an example, we consider the case in which
 the desired synchronized motion is approximately given by
\eqref{eqn:solcex12} with
\begin{equation}
\Omega_0=0,\quad
U_0(x)=
\begin{cases}
-\frac{1}{6}\pi & \mbox{for $x\in[0,0.5]$;}\\
\frac{1}{6}\pi & \mbox{for $x\in(0.5,1]$}
\end{cases}
\label{eqn:ex12a}
\end{equation}
when the natural frequencies are given by \eqref{eqn:omega} with
\[
\omega(x)=
\begin{cases}
\tfrac{1}{2}(x-1) & \mbox{for $x\in[0,0.5]$;}\\
\tfrac{1}{2}x & \mbox{for $x\in(0.5,1]$}.
\end{cases}
\]
The synchronized solution $u=U_0(x)+\theta$ to \eqref{eqn:cex1} with \eqref{eqn:ex12a}
 consists of two clusters.
We choose $I_-=[0.2,0.3]$ and $I_+=[0.7,0.8]$, so that
\[
H_{2\pm}=0.05,\quad
C_0=0.01\times\frac{\sqrt{3}}{2},\quad
H_1(x)=
\begin{cases}
\displaystyle
-C_0^{-1}(x-1) & \mbox{$x\in[0,0.5]$;}\\
C_0^{-1}x & \mbox{$x\in(0.5,1]$.}
\end{cases}
\]
Hence,
\[
W(x,y)=
\begin{cases}
\displaystyle
-\frac{10(x-1)}{\sqrt{3}} & \mbox{if $x\in[0,0.5]$ and $y\in[0.2,0.3]\cup[0.7,0.8]$};\\
\displaystyle
\frac{10x}{\sqrt{3}} & \mbox{if $x\in(0.5,1]$ and $y\in[0.2,0.3]\cup[0.7,0.8]$};\\
0 & \mbox{otherwise.}
\end{cases}
\]

\begin{figure}[t]
\begin{center}
\includegraphics[scale=0.26]{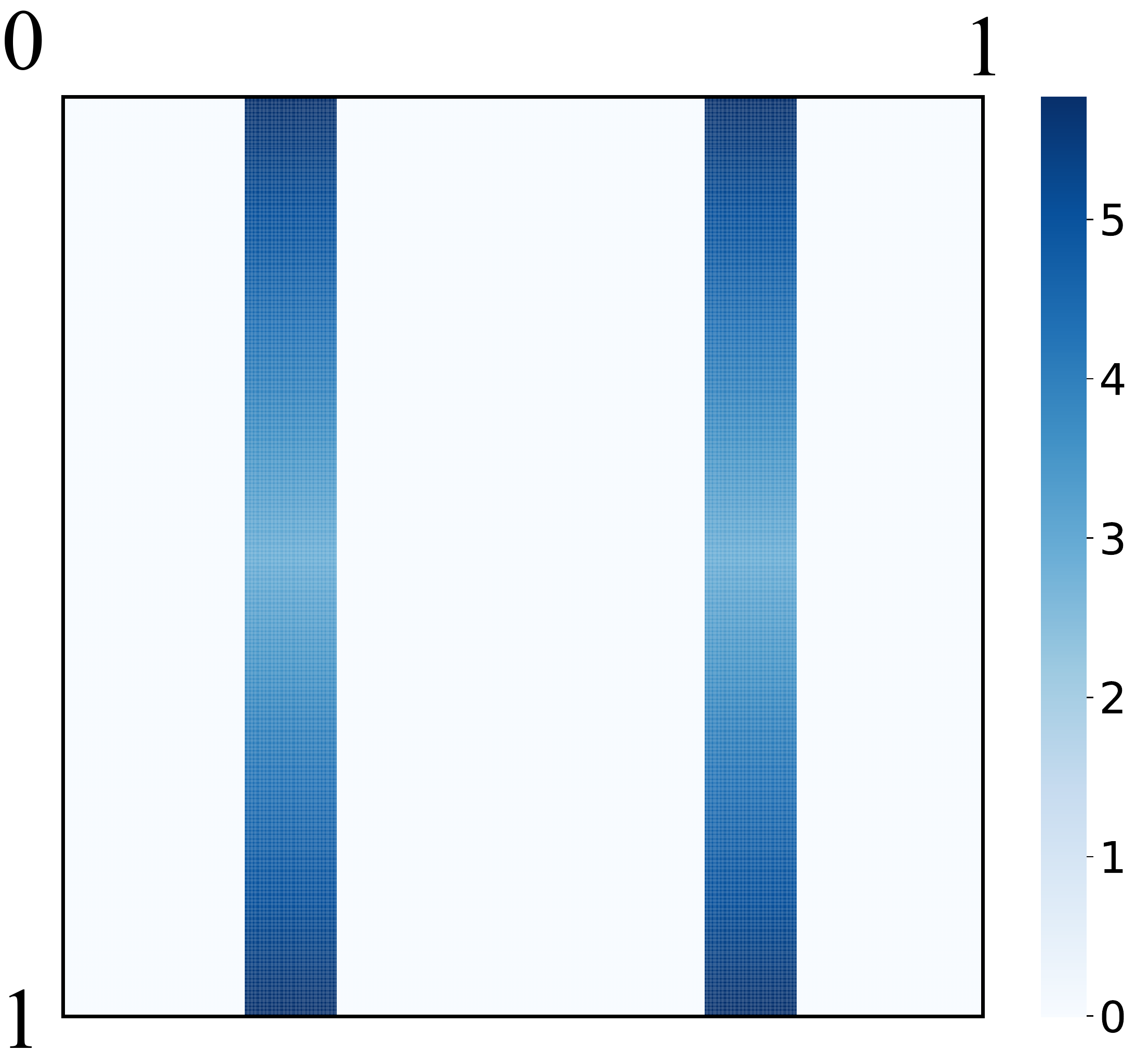}
\caption{Pixel picture of a sampled weight matrix
of the deterministic directed dense graph $G_n$ with $n=1000$.
The shade of the corresponding pixel represents the strength of $w_{ij}^n$.
Especially, its color is white if $w_{ij}^n=0$.
See also the caption of Fig.~\ref{fig:5.1e}.}
\label{fig:5.1k}
\end{center}
\end{figure}

Henceforth we assume that
the network graph $G_n$ is deterministic directed dense.
The other network graphs can be treated similarly, as in Sections~3.2.2 and 3.2.3.
Figure~\ref{fig:5.1k} represents the weight matrix of $G_n$ with $n=1000$.
Noting that
\[
\frac{1}{\cos U_i}-2\cos U_i=1,\quad
i\in[n],
\]
we see that condition~\eqref{con:2d'} holds as well as \eqref{con:2d},
so that by our general arguments to the Kuramoto model \eqref{eqn:dex1},
the family of desired synchronized solutions is asymptotically stable.
We carried out numerical simulations
for the Kuramoto model \eqref{eqn:dex1}
with the weight matrix displayed in Fig.~\ref{fig:5.1k}
for $n=1000$, $\alpha_n=1$ and $a=0.5$.
The initial values $u_i^n(0)$, $i\in[n]$, were independently randomly chosen on $[-\pi,\pi]$.

\begin{figure}[t]
\begin{center}
\includegraphics[scale=0.3]{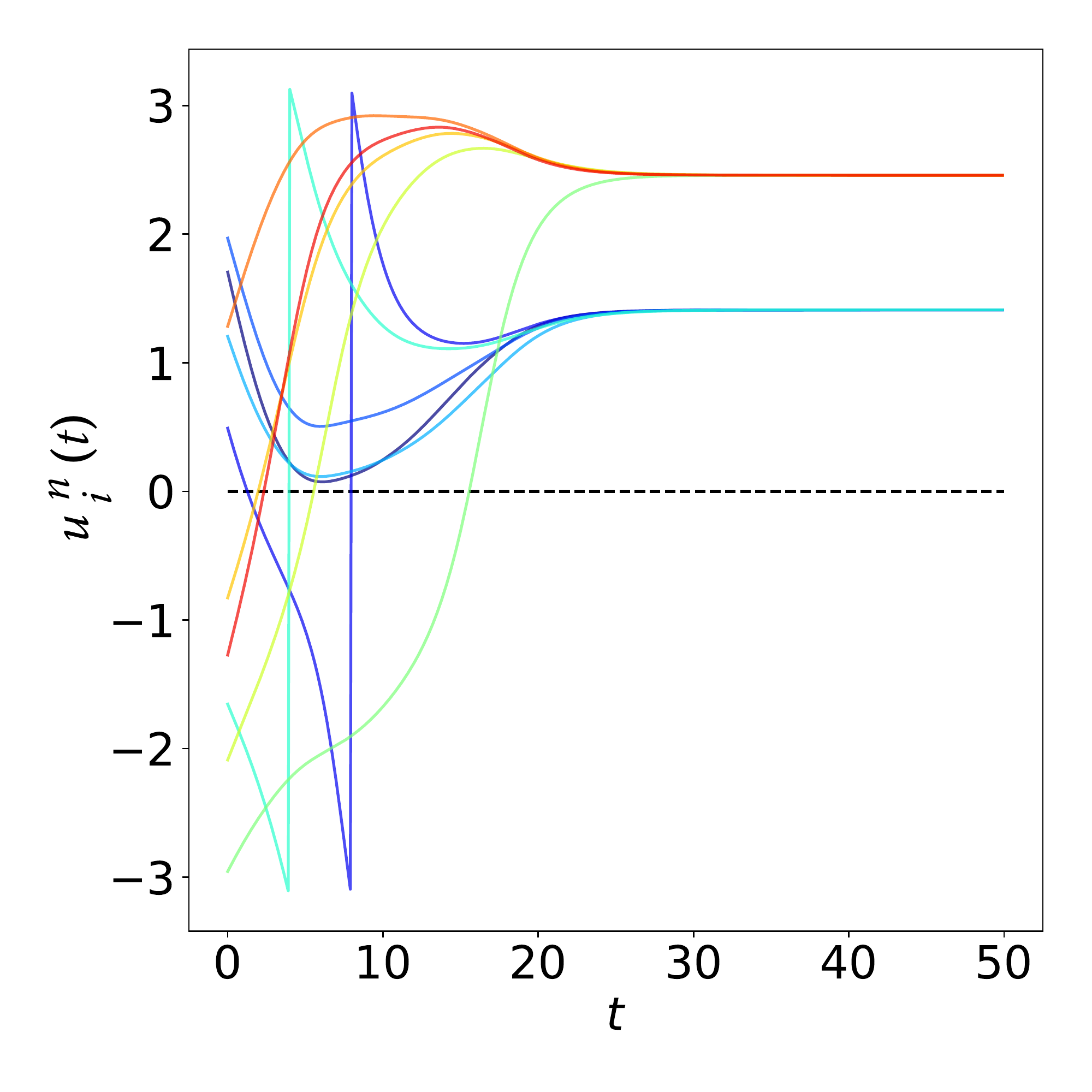}
\caption{Numerical simulation result
of the Kuramoto model \eqref{eqn:dex1}
with the weight matrix displayed in Fig.~\ref{fig:5.1k} for $n=1000$ and $a=0.5$.
See also the caption of Fig.~\ref{fig:5.1c}.}
\label{fig:5.1l}
\end{center}
\end{figure}

\begin{figure}[t]
\begin{center}
\includegraphics[scale=0.3]{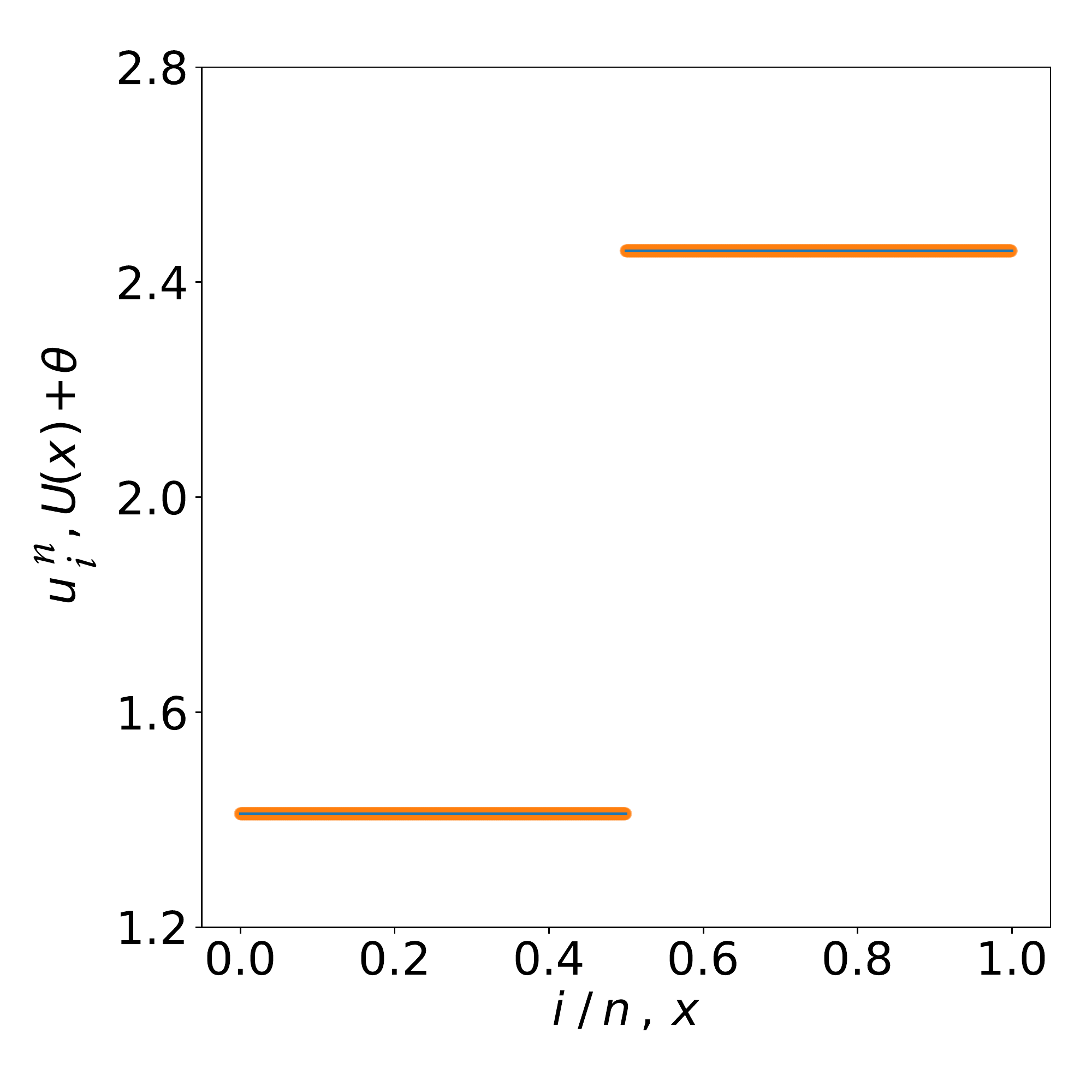}
\caption{The snapshot of the phases $u_i^n(50)$, $i\in[n]$,
in the Kuramoto model \eqref{eqn:dex1}
 with the weight matrix displayed in Fig.~\ref{fig:5.1k} for $n=1000$ and $a=0.5$.
Here $\theta=1.934663\ldots$ is estimated.}
\label{fig:5.1m}
\end{center}
\end{figure}

Figure~\ref{fig:5.1l} shows the time-history of every 100th node.
In Fig.~\ref{fig:5.1m} the snapshot of the phases $u_i^n(50)$, $i\in[n]$, is plotted.
We observe that the response rapidly converges to the desired synchronized state
\[
u_i^n=
\begin{cases}
-\frac{1}{6}\pi+\theta & \mbox{for $i\le 500$;}\\
\frac{1}{6}\pi+\theta & \mbox{for $i>500$}
\end{cases}
\]
with
\[
\theta\approx\frac{1}{n}\sum_{i=1}^n u_i^n(50)=1.934663\ldots.
\]


\section{Controlled Kuramoto model with natural frequencies}

We next consider a modified Kuramoto model with multiple natural frequencies,
\begin{equation}
\frac{d}{dt} u_i^n (t) = \omega_i^n+b\sin(\bar{U}-u_i^n(t))
+\frac{1}{n} \sum^{n}_{j=1}\sin \left(u_j^n(t) - u_i^n(t) \right),\quad i \in [n],
\label{eqn:dex2}
\end{equation}
and its continuum limit
\begin{equation}
\frac{\partial}{\partial t}u(t,x) = \omega(x) +b\sin(\bar{U}-u(t,x))
+ \int_I \sin( u(t,y)-u(t,x) ) dy, \quad x \in I,
\label{eqn:cex2}
\end{equation}
which are, respectively, special cases of \eqref{eqn:dskw} and \eqref{eqn:cskw}
with $m=1$, $f(u,t)=b\sin(\bar{U}-u)$, $D(u)=\sin u$ and $W(x,y)\equiv1$,
where $\bar{U}\in\Sset^1$ is a constant.
Similar systems were numerically studied in \cite{SA15,SA16}.
Here the dependence of the graph on the index $k$ is dropped out, as in Section~3.
A non-constant function could be taken as the desired state $\bar{U}$
 (cf. Eq.~\eqref{eqn:tf})
 but the analytic treatment of the continuum limit \eqref{eqn:cex2} would be very difficult.

Let $b=0$.
From Section~3 we see that if there is not a constant $C$
 satisfying \eqref{con:c} with $H_\ell(x)\equiv 1$, $\ell=1,2$,
 then both the continuum limit \eqref{eqn:cex2}
 and Kuramoto model \eqref{eqn:dex2} with $n>0$ sufficiently large
 do not have synchronized solutions of the forms \eqref{eqn:solcex1} and \eqref{eqn:soldex1}, respectively.
For example, when $\omega(x)=a(x-\frac{1}{2})$ as in Section~3.2,
 if $a>\frac{1}{2}\pi$, then such a constant $C$ does not exist.
So we try to choose an adequate value of $b$
so that the modified Kuramoto model \eqref{eqn:dex2}
exhibits a synchronized motion like \eqref{eqn:soldex1}.
In this case
 the second term `$b\sin(\bar{U}-u_i^n(t))$' is regarded
 as a nonlinear feedback control in \eqref{eqn:dex2}
 such that the coupled oscillator network \eqref{eqn:dex2} is desired
 to exhibit the completely synchronized state $u_i^n=\bar{U}$, $i\in[n]$.

We assume that the continuum limit \eqref{eqn:cex2} with $b\neq 0$
has a synchronized solution of the form \eqref{eqn:solcex1} with $\theta=\bar{U}$.
An argument similar to that of Section~3 can apply
although Eq.~\eqref{con:c} is replaced with
\begin{equation}
C-b=\int_I\cos U(y)dy.
\label{eqn:ccex2}
\end{equation}
So we see that if there exists a constant $C$ such that Eq.~\eqref{eqn:ccex2} holds,
then the synchronized solution \eqref{eqn:solcex1} with $\theta=\bar{U}$ exists.
Moreover, the linear operator \eqref{eqn:lcex1} is replaced with
\[
\L\phi(x)=\int_I(\phi(y)-\phi(x))\cos(U(y)-U(x))dy-b\phi(x)\cos U(x),
\]
so that
\begin{align*}
&
\langle\L\phi,\phi\rangle\\
&
=-\tfrac{1}{2}\int_{I^2}(\phi(y)-\phi(x))^2\cos(U(y)-U(x))dxdy
-b\int\phi(x)^2\cos U(x)dx.
\end{align*}
Hence, if condition~\eqref{con:2'} holds with $p=1$,
then the synchronized solution \eqref{eqn:solcex1} with $\theta=\bar{U}$
is asymptotically stable.
Similarly, we see that if there exists a constant $C_D$ such that
\[
C_D-b=\frac{1}{n}\sum_{i=1}^n\cos U_i,
\]
then the synchronized solution \eqref{eqn:soldex1} with $\theta=\bar{U}$ exists.
Since the relations \eqref{eqn:limitex1} also hold,
 the synchronized solutions of \eqref{eqn:dex2}
 converges to \eqref{eqn:solcex1} of \eqref{eqn:cex2} as $n\to\infty$.
Moreover, we show that if condition~\eqref{con:2'} holds with $p=1$,
 then the solution \eqref{eqn:soldex1} is asymptotically stable for $n>0$ sufficiently large,
 as in Section~3.1.
 
As an example, we consider the case in
which $\omega(x)=a(x-\frac{1}{2})$ and $\bar{U}=\pi/6$.
We have $\Omega=0$ and compute \eqref{eqn:ccex2} as
\[
C-b=\frac{C}{a} \left(\arcsin\left(\frac{a}{2C} \right)
+\frac{a}{2C} \sqrt{1- \left( \frac{a}{2C} \right)^2 } \right)
\]
like \eqref{eqn:acex1'}, so that
\begin{equation}
a=\left(\arcsin\left(\frac{a}{2C} \right)
+\frac{a}{2C} \sqrt{1- \left( \frac{a}{2C} \right)^2 } \right)+2b\left(\frac{a}{2C}\right).
\label{eqn:ccex2'}
\end{equation}
We see that if and only if
\begin{equation}
b\ge\frac{4a-2-\pi}{4\sqrt{2}},
\label{con:12}
\end{equation}
then there exists a constant $C$ satisfying \eqref{eqn:ccex2'}
 and condition~\eqref{con:2'} with $p=1$ holds,
 so that the synchronized solution \eqref{eqn:solcex1a} with $\theta=\pi/6=0.5235987\ldots$
 exists in the continuum limit \eqref{eqn:cex2} and it is asymptotically stable,
 along with the solution \eqref{eqn:soldex1a} with $\theta=\pi/6$
 in the discrete model \eqref{eqn:dex2} for $n>0$ sufficiently large.

\begin{figure}[t]
\begin{center}
\includegraphics[scale=0.3]{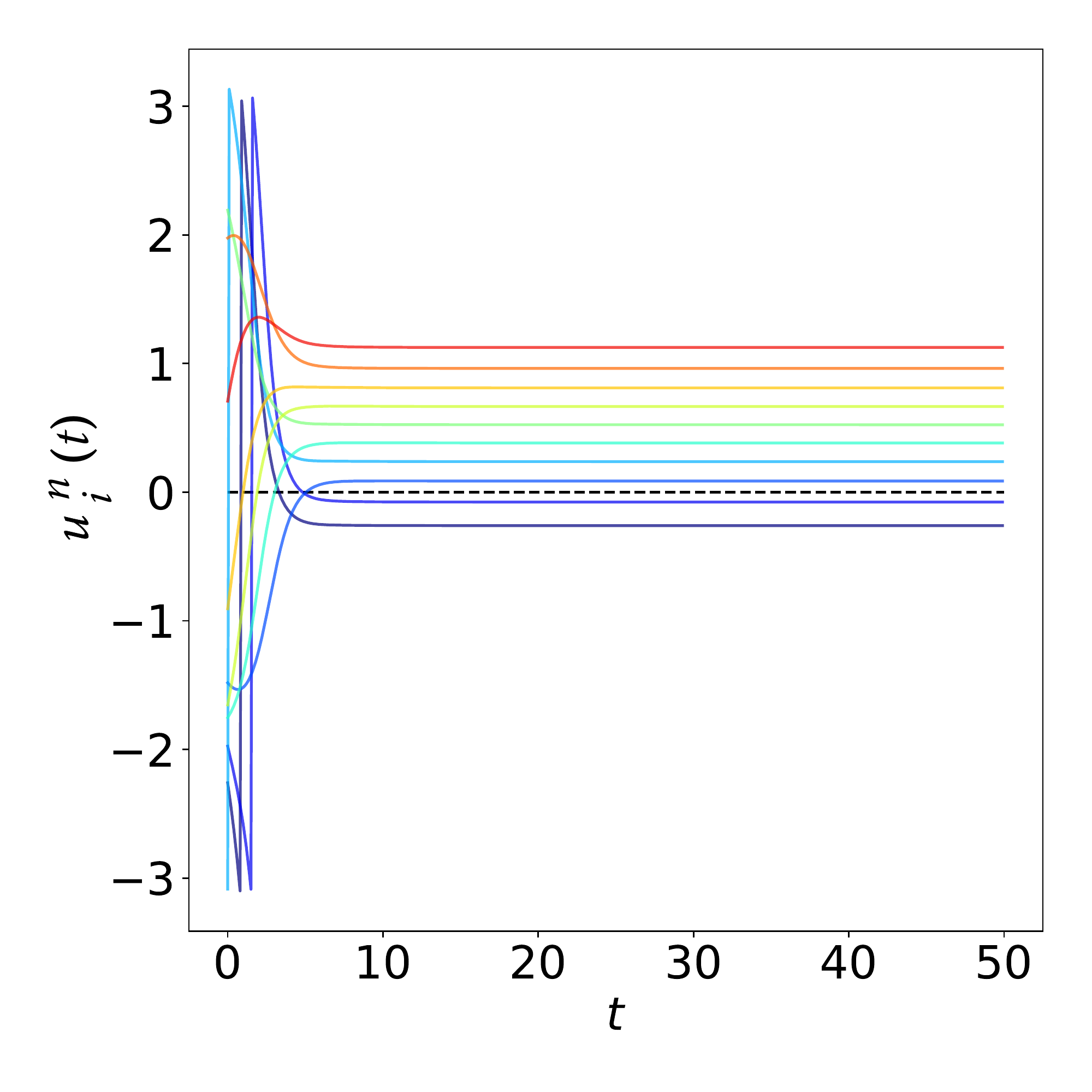}
\caption{Numerical simulation result
of the controlled Kuramoto model \eqref{eqn:dex2}
with $\omega(x)=a(x-\frac{1}{2})$ for $n=1000$, $a=2$ and $b=0.506$.
See also the caption of Fig.~\ref{fig:5.1c}.}
\label{fig:5.2a}
\end{center}
\end{figure}

\begin{figure}[t]
\begin{center}
\includegraphics[scale=0.3]{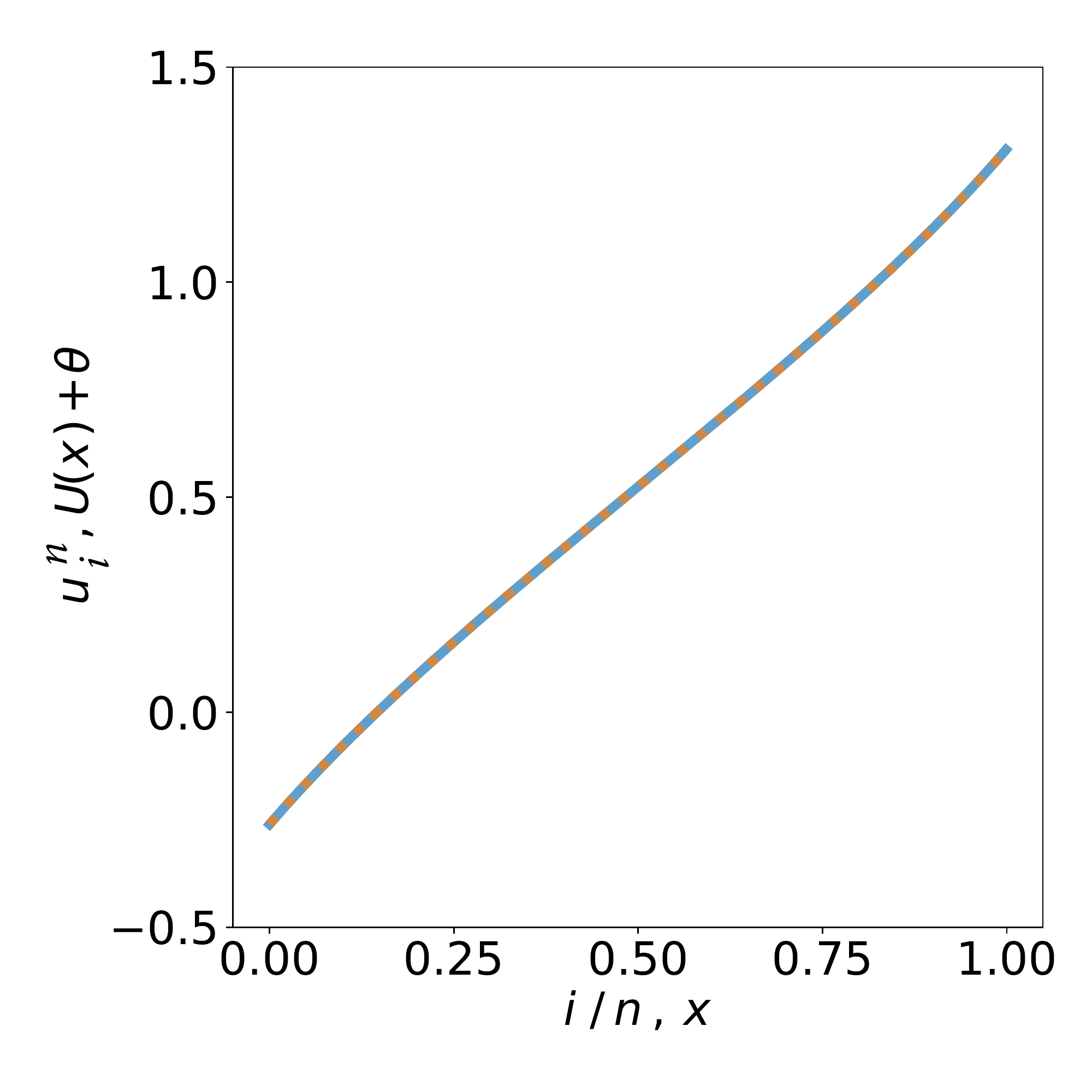}
\caption{Comparison between $u_i^n(50)$, $i\in[n]$,
to the controlled Kuramoto model \eqref{eqn:dex2} with $\omega(x)=a(x-\frac{1}{2})$
and the synchronized solution \eqref{eqn:solcex1a} to the continuum limit \eqref{eqn:cex2}
with $a/(2C)=0.70669\ldots$ and $\theta=0$ for $n=1000$, $a=2$ and $b=0.506$.
The former and latter are plotted as orange dotted and blue solid lines, respectively.}
\label{fig:5.2b}
\end{center}
\end{figure}

We carried out a numerical simulation
for the controlled Kuramoto model \eqref{eqn:dex2} with $n=1000$, $a=2$ and
\[
b=0.506>\frac{6-\pi}{4\sqrt{2}}=0.5052998\ldots,
\]
which yields $a/(2C)=0.70699\ldots$ by \eqref{eqn:ccex2'}.
The initial values $u_i^n(0)$, $i\in[n]$, were independently randomly chosen on $[-\pi,\pi]$.
Figure~\ref{fig:5.2a} shows the time-history of every 100th node.
We observe that the response rapidly converges to a synchronized state.
In Fig.~\ref{fig:5.2b} the response of \eqref{eqn:dex2} at $t=50$ for $n=1000$
is compared with the corresponding continuum limit synchronized solution \eqref{eqn:solcex1a}.
We see that both coincide almost completely, as predicted theoretically.


\section{Kuramoto model with no natural frequencies on two graphs}

We finally consider another modified Kuramoto model with no natural frequencies
depending on two graphs,
\begin{align}
\frac{d}{dt} u_i^n (t) &= \frac{1}{n}\sum^{n}_{j=1}
\sin \left( u_j^n(t) - u_i^n(t) \right) \notag\\
& \quad + \frac{K}{n \alpha_n} \sum^{n}_{j=1}
w_{ij} \sin 2 \left( u_j^n(t) - u_i^n(t) \right) ,\quad i \in [n],
\label{eqn:dex3}
\end{align}
and its continuum limit
\begin{align}
\frac{\partial}{\partial t}u(t,x) &=
\int_I \sin( u(t,y)-u(t,x) ) dy \notag\\
& \quad + K \int_I W(x,y) \sin 2( u(t,y)-u(t,x) ) dy , \quad x \in I,
\label{eqn:cex3}
\end{align}
where $K$ is a constant.
Here one of the graphs is a complete graph,
and the dependence of the other graph on the index $k$ is dropped out.
It is well known \cite{WSG06} that
the Kuramoto model with no natural frequencies on a single complete graph, i.e.,
Eq.~\eqref{eqn:dex3} with $K=0$,
exhibits a complete synchronized state $u_i^n(t)= q$, $i\in[n]$, where $q\in\Sset^1$ is a constant.
So we look for the value of $K\neq 0$
such that the complete synchronized state becomes unstable.
As the second graph, we choose a $\kappa$-nearest neighbor graph given by
\begin{equation*}
w_{ij} =
\begin{cases}
1 & \mbox{if $|i-j|\leq\kappa n$ or $|i-j|\geq (1-\kappa)n$;}\\
0 & \mbox{otherwise}
\end{cases}
\end{equation*}
with $0 <\kappa\leq 1/2$, so that
\begin{equation*}
W(x,y)=
\begin{cases}
1 & \mbox{if $|x-y|\leq\kappa$ or $|x-y|\geq 1-\kappa$;}\\
0 & \mbox{otherwise.}
\end{cases}
\end{equation*}
In particular, the graph is deterministic, undirected and dense,
and $\alpha_n=1$ in \eqref{eqn:dex3}.

We first discuss the linear stability
 of the complete synchronized solution $u(t,x)=q$ in \eqref{eqn:cex3},
which corresponds to the solution $u_i^n(t)=q$ in \eqref{eqn:dex3}.
The associated linear operator $\L:L^2(I)\to L^2(I)$ is given by
\begin{align}
\L \phi(x) 
&= \int_I (1 + 2K W(x,y)) \phi(y) dy - (1 + 4K\kappa) \phi(x).
\label{eqn:1-1L}
\end{align}
We see that $\phi(x)=1$ is an eigenfunction of $\L$ associated with the zero eigenvalue.
Moreover,
\begin{equation}
\phi(x)=\sin 2 \pi\ell x,\quad
\cos 2 \pi\ell x
\label{eqn:efex3}
\end{equation}
are eigenfunctions associated with the eigenvalue
\begin{equation}
\lambda = \frac{2K}{\pi\ell} \sin 2 \pi\ell\kappa-(1+4K\kappa)
\label{eqn:evex3}
\end{equation}
for each $\ell\in\Nset$ since
\begin{align*}
&
\int_I W(x,y) \sin 2 \pi\ell y\, dy
= \frac{1}{\pi\ell} \sin 2 \pi\ell \kappa\,\sin 2 \pi\ell x,\\
&
\int_I W(x,y) \cos 2 \pi\ell y dy
= \frac{1}{\pi\ell} \sin 2 \pi \ell\kappa\,\cos 2 \pi\ell x.
\end{align*}
These eigenvalues are the only ones of $\L$
 since the Fourier expansion of any function in $L^2(I)$ converges a.e.
 by Carleson's theorem \cite{C66}.
We compute
\begin{align*}
\langle \L \phi, \phi \rangle
&= \int_{I^2} (1 + 2K W(x,y) ) \phi(x) \phi(y) dxdy
- \int_{I^2} (1 + 4K\kappa) \phi(x)^2 dx \\
&= - \frac{1}{2} \int_{I^2} (1+ 2KW(x,y)) (\phi(x)-\phi(y))^2 dxdy \\
& \quad + \frac{1}{2} \int_{I^2} (1 + 2K W(x,y)) (\phi(x)^2+\phi(y)^2) dxdy
- \int_{I^2} (1+4K\kappa) \phi(x)^2 dx \\
&= - \frac{1}{2} \int_{I^2} (1+ 2KW(x,y)) (\phi(x)-\phi(y))^2 dxdy \\
& \quad + \int_{I^2} (1 + 2K W(x,y)) \phi(x)^2dxdy
- \int_{I^2} (1+4K\kappa) \phi(x)^2dx \\
&= - \frac{1}{2} \int_{I^2} (1+ 2KW(x,y)) (\phi(x)-\phi(y))^2 dxdy
\end{align*}
since
\[
\int_{I^2}W(x,y)\phi(x)^2dx
=2\kappa\int_I\phi(x)^2dx=2\kappa\int_{I^2}\phi(x)^2dxdy.
\]
Hence, if $K\geq 0$, then the complete synchronized solution is linearly stable.

\begin{figure}[t]
\begin{center}
\includegraphics[scale=0.6]{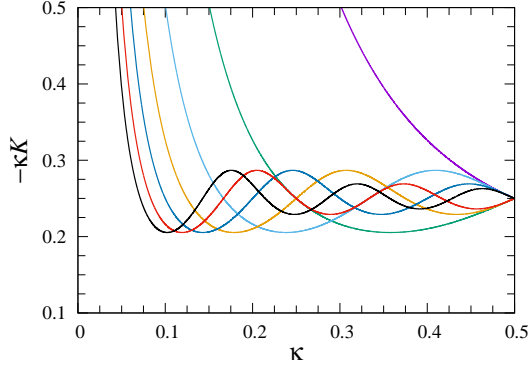}
\caption{Boundaries of the unstable regions given by \eqref{eqn:br} with $\ell=1$-$7$
for the complete synchronized solutions to \eqref{eqn:cex3}.
The boundaries of $\ell=1,2,3,4,5,6$ and $7$ are plotted
as purple, green, light blue, orange, blue, red and black lines. }
\label{fig:5.3a}
\end{center}
\end{figure}

Assume that $K<0$.
From \eqref{eqn:evex3} we see that if
\begin{equation}
\frac{2K}{\pi\ell} \sin 2 \pi\ell\kappa-(1+4K\kappa)>0\quad\mbox{i.e.,}\quad
-\kappa K>\frac{\pi\ell\kappa}{2(2\pi\ell\kappa-\sin 2\pi\ell\kappa)}
\label{eqn:br}
\end{equation}
for some $\ell\in\Nset$,
then the complete synchronized solution $u(t,x)=q$ is unstable for any constant $q$.
We remark that Eq.~\eqref{eqn:br} only gives a sufficient condition
 for $u(t,x)=q$ to be unstable.
We plot the boundaries of the unstable regions for the complete synchronized solutions
 given by \eqref{eqn:br} with $\ell=1$-$7$ in Fig.~\ref{fig:5.3a}.
By Theorem~\ref{thm:main4} (see also Remark~\ref{rmk:2d}(i)),
 the corresponding complete synchronized solutions to \eqref{eqn:dex3}
 for $n>0$ sufficiently large
 are also unstable in the region above any one of the curves in the figure.

\begin{figure}[t]
\begin{center}
\includegraphics[scale=0.3]{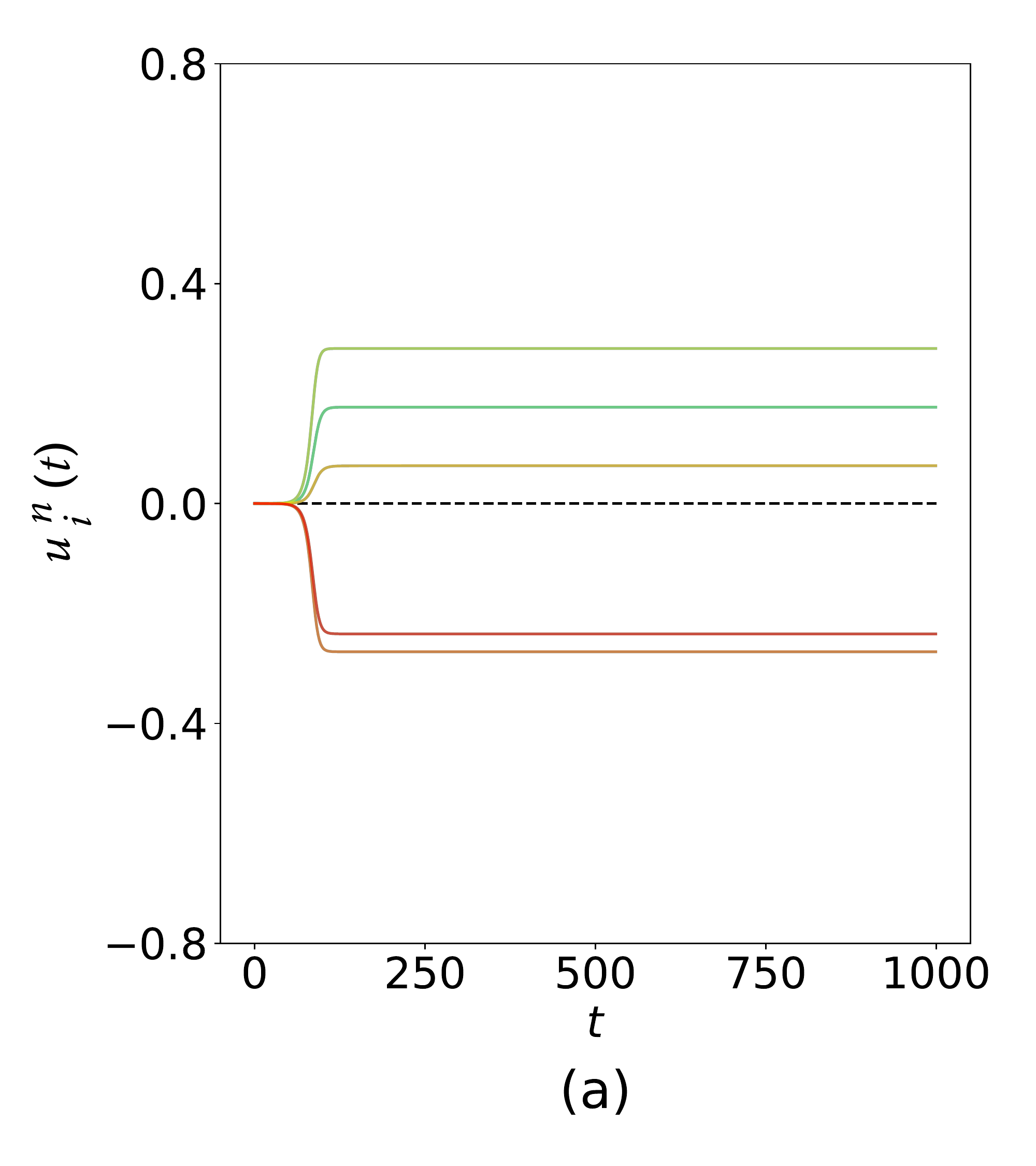}
\includegraphics[scale=0.3]{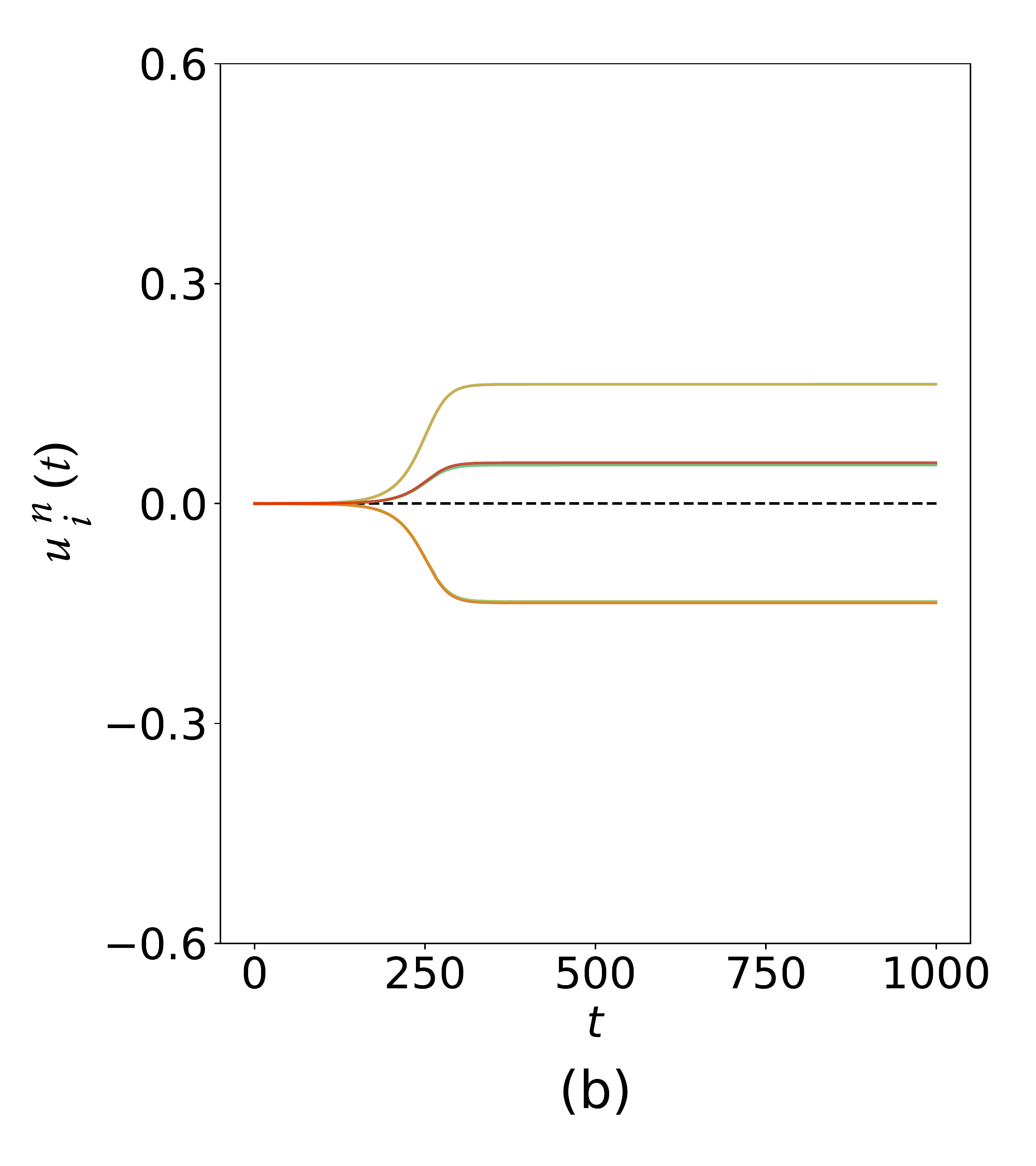}
\caption{Numerical simulation results of the modified Kuramoto model \eqref{eqn:dex3}
for $n=1000$:
(a) $\kappa=1/3$ and $K=-0.7$;
(b) $\kappa=1/8$ and $K=-1.7$.
See also the caption of Fig.~\ref{fig:5.1b}.}
\label{fig:5.3b}
\end{center}
\end{figure}

\begin{figure}[t]
\begin{center}
\includegraphics[scale=0.3]{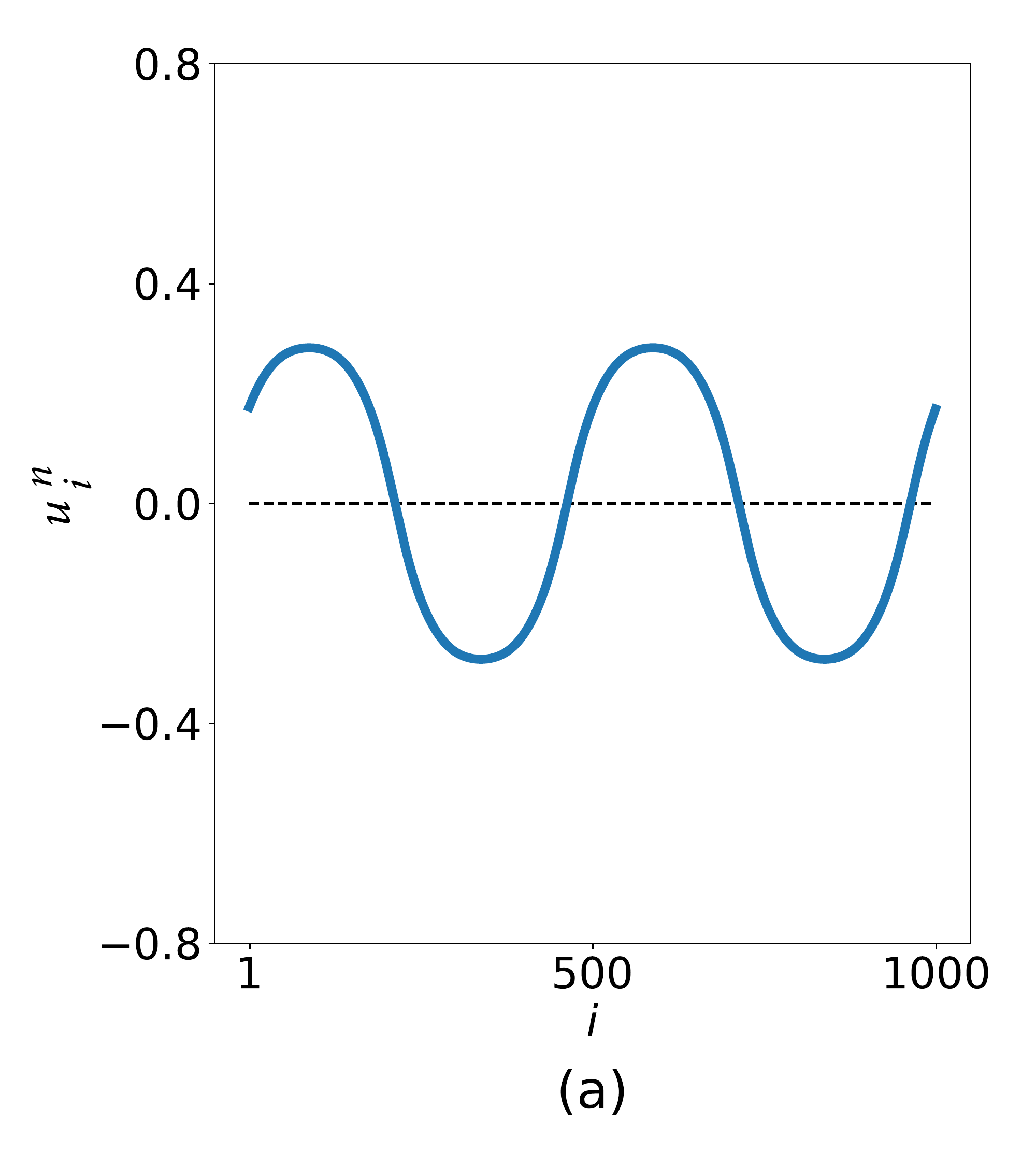}
\includegraphics[scale=0.3]{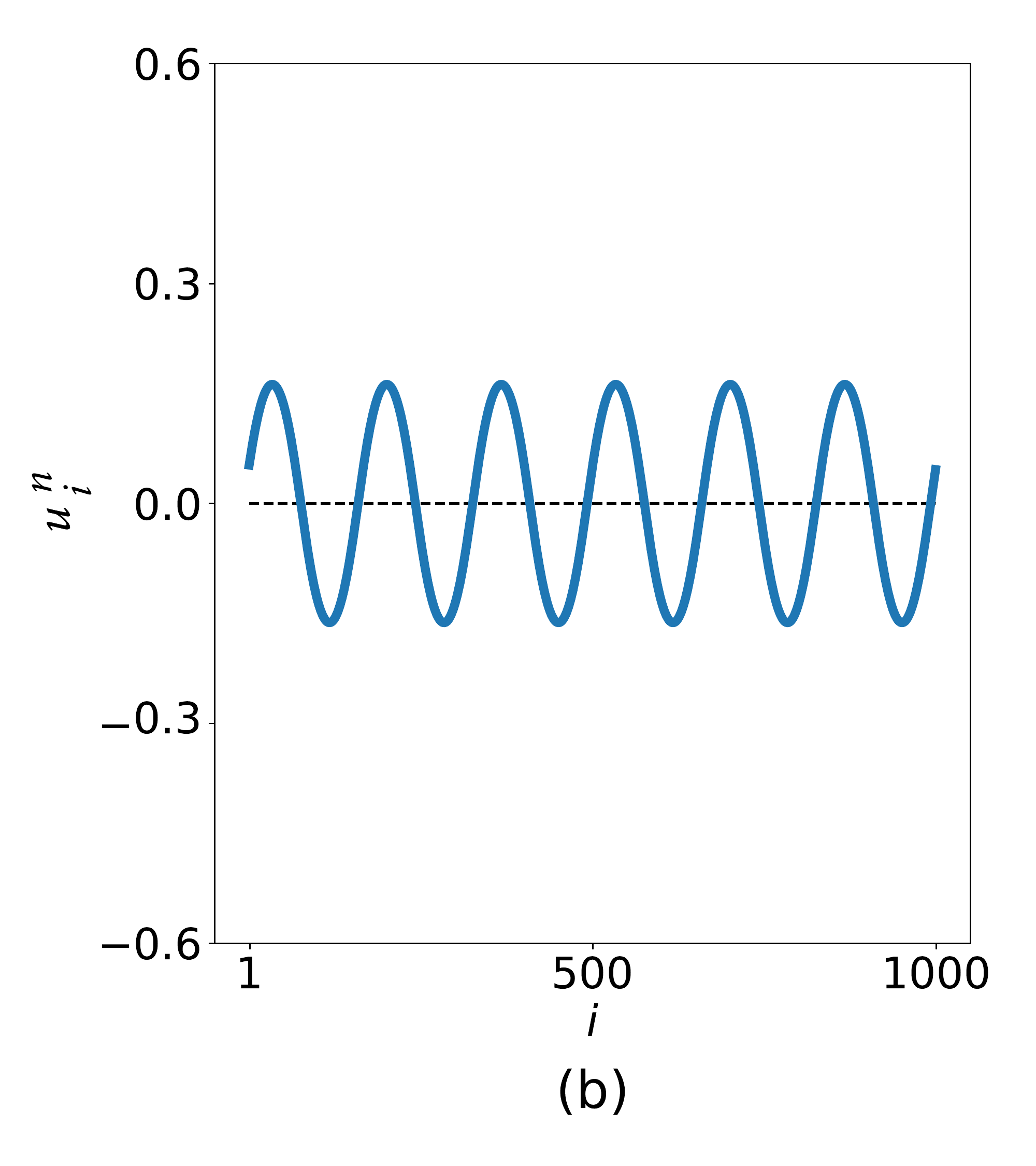}
\caption{Steady state of the modified Kuramoto model \eqref{eqn:dex3} for $n=1000$:
(a) $\kappa=1/3$ and $K=-0.7$; (b) $\kappa=1/8$ and $K=-1.7$.
Here $u_i^n(1000)$, $i\in[n]$, are plotted.}
\label{fig:5.3c}
\end{center}
\end{figure}

We carried out numerical simulations
for the modified Kuramoto model \eqref{eqn:dex3}
with $n=1000$ and $\kappa=1/3$ or $1/8$.
The weakest condition of \eqref{eqn:br} is $K<-0.621504\ldots$ with $\ell=2$
and $K<-1.64988\ldots$ with $\ell=6$ for $\kappa=1/3$ and $1/8$, respectively.
The initial values $u_i^n(0)$, $i\in[n]$,
were independently randomly chosen on $[-10^{-4},10^{-4}]$.
So if a complete synchronized state is asymptotically stable,
then the response converges to it.
Figures~\ref{fig:5.3b}(a) and (b) show the time-histories of every $100$th node
for $(\kappa,K)=(1/3,-0.7)$ and $(1/8,-1.7)$, respectively.
We observe that the response does not converge to a complete synchronized state
for both cases, as predicted theoretically.
In Figs.~\ref{fig:5.3c}(a) and (b),
$u_i^n(1000)$, $i\in[n]$, in the modified Kuramoto model \eqref{eqn:dex3} with $n=1000$,
 which may be regarded as the steady states,
are plotted for $(\kappa,K)=(1/3,-0.7)$ and $(1/8,-1.7)$, respectively.
A stable oscillatory state with two (resp. six) maxima and minima appears
like the eigenfunctions \eqref{eqn:efex3} with a positive eigenvalue
for the former (resp. latter) case
 although its existence and asymptotic stability is not obtained theoretically
 unlike the previous two examples.

\appendix

\renewcommand{\theequation}{\Alph{section}.\arabic{equation}}
\setcounter{equation}{0}


\section{Proof of Theorem~\ref{thm:main1}}

In this appendix, we extend arguments in the proof of Theorem~3.1 of \cite{KM17}
and prove Theorem~\ref{thm:main1}.
Let $L_f$ and $L_k$ be, respectively, the Lipschitz constants
of $f(\cdot,t)$ and $D_k(\cdot)$ for $k\in[m]$, and let
\[
L_D=\max_{k\in[m]}L_k.
\]
Let $\|\cdot\|$ stand for the norm in $L^2(I)$.

\begin{proof}[Proof of Theorem~$\ref{thm:main1}$]
Let
\begin{equation}
T=\left(2(L_f+mL_D(C_2+\|W\|_{L^2(I^2)}))\right)^{-1}.
\label{eqn:T}
\end{equation}
Let $\F=C(0,T;L^2(I))$ and define
\[
K(\mathbf{u})(t)
=g+\sum_{k=1}^m\int_0^t\biggl(f(u(s,\cdot),s)+\int_I W(\cdot,y)D_k(u(s,y)-u(s,\cdot))dy\biggr)ds
\]
for $\mathbf{u}\in\F$.
Obviously, if $\mathbf{u}\in\F$, then $K(\mathbf{u})\in\F$.
We easily see that a fixed point of the map $K:\F\to\F$, i.e.,
\[
\mathbf{u}=K(\mathbf{u}),
\]
gives a solution to the IVP of \eqref{eqn:csk} with \eqref{eqn:icc}.
In the following we show that $K$ is a contraction on $\F$,
 so that by the contraction map theorem
 (see, e.g., Theorem~2.2 in Chapter~2 of \cite{CH82})
 it has a unique fixed point in $\F$
 and the IVP of \eqref{eqn:csk} with \eqref{eqn:icc} has a unique solution.

For any $\mathbf{u}(t),\mathbf{v}(t)\in\F$, by the Lipschitz continuity of $f$ and $D_k$,
we compute
\begin{align*}
&\|K(\mathbf{u})-K(\mathbf{v})\|
 =\max_{t\in[0,T]}\|K(\mathbf{u}(t))-K(\mathbf{v}(t))\|\\
&
\le\max_{t\in[0,T]}\int_0^t\biggl\|f(u(s,\cdot),s)-f(v(s,\cdot),s)\\
&
\quad +\sum_{k=1}^m\int_I W(\cdot,y)(D_k(u(s,y)-u(s,\cdot))-D_k(v(s,y)-v(s,\cdot))dy\biggr\|ds\\
&
\le T\max_{t\in[0,T]}\biggl\|L_f|u(t,\cdot)-v(t,\cdot)|\\
&
\quad +L_D\sum_{k=1}^m\int_I W(\cdot,y)|u(t,y)-u(t,\cdot)-v(t,y)+v(t,\cdot)|dy\biggr\|\\
&
\le T\max_{t\in[0,T]}\biggl(L_f\|\mathbf{u}(t)-\mathbf{v}(t)\|
+L_D\sum_{k=1}^m\biggl(\biggl\|\int_I W(\cdot,y)|u(t,\cdot)-v(t,\cdot)|dy\biggr\|\\
&
\quad
+\biggl\|\int_I W(\cdot,y)|u(t,y)-v(t,y)|dy\biggr\|\biggr).
\end{align*}
Noting that by \eqref{eqn:assumpy}
\begin{align*}
\biggl\|\int_I W(\cdot,y)|u(t,\cdot)-v(t,\cdot)|dy\biggr\|
\le C_2\|\mathbf{u}(t)-\mathbf{v}(t)\|
\end{align*}
and by Schwarz' inequality
\[
\biggl\|\int_I W(\cdot,y)|u(t,y)-v(t,y)|dy\biggr\|
\le\|W\|_{L^2(I^2)}\,\|\mathbf{u}(t)-\mathbf{v}(t)\|,
\]
we obtain
\begin{align*}
&
\|K(\mathbf{u})-K(\mathbf{v})\|\\
&\le T(L_f+mL_D(C_2+\|W\|_{L^2(I^2)}))\|\mathbf{u}(t)-\mathbf{v}(t)\|
=\frac{1}{2}\|\mathbf{u}(t)-\mathbf{v}(t)\|.
\end{align*}
Here the last equality in the above equation holds due to \eqref{eqn:T}.
Thus, the IVP of \eqref{eqn:csk} with \eqref{eqn:icc}
has a unique solution $\bar{\mathbf{u}}(t)$ on $[0,T]$.
Using the standard arguments given in the last paragraph
 in the proof of Theorem~3.1 of \cite{KM17},
 we can extend the solution $\bar{\mathbf{u}}(t)$ to $(-\infty,\infty)$
 and show that it is continuously differentiable.
Actually, for example, the right-hand-side of \eqref{eqn:csk} is continuous
 if $\bar{\mathbf{u}}\in C(\Rset;L^2(I))$.
Moreover, since $K:\F\to\F$ is a uniform contraction and depends on $g$ continuously,
 the unique solution is a continuous function of $g$.
Thus, we complete the proof.
\end{proof}


\section{Proof of Theorem~\ref{thm:main2}}

In this appendix, we extend arguments in the proof of Theorem~3.1 of \cite{M19}
and prove Theorem~\ref{thm:main2}.
Henceforth we assume that the hypotheses of Theorem~\ref{thm:main2} hold.
So there exists a positive constant $C_D$ such that
\begin{equation*}
D_k(u)<C_D\quad\mbox{for $u\in\Rset$ and $k\in[m]$.}
\end{equation*}

Changing the order of the graphs if necessary,
we assume that for some $m_\d \in \{0\} \cup [m]$,
$G_{kn}$ is a deterministic or random graph for $n\in\Nset$
depending on whether $k \leq m_\d$ or $k>m_\d$.
All of $G_{kn}$, $k\in[m]$, are random if $m_\d =0$,
and deterministic if $m_\d =m$.
We average the coefficients appearing in the second term
in the right-hand-side of \eqref{eqn:dsk} for $k\in[m]\setminus[m_\d]$ as
\[
\frac{1}{n\alpha_{kn}}\Eset[w_{ij}^{kn}]=\frac{1}{n}\bar{W}_{ij}^{kn},\quad
\bar{W}_{ij}^{kn}=\langle\tilde{W}\rangle_{ij}^n,
\]
where $\alpha_{kn}=1$ and $\tilde{W}=W$ when $G_{kn}$ is dense,
 and consider the averaged model
\begin{align}
\frac{d}{dt} v_i^n(t) &= f(v_i^n, t) +
\sum^{m_\d}_{k=1} \frac{1}{n} \sum^{n}_{j=1} w_{ij}^{kn} D_k(v_j^n(t) - v_i^n(t))\notag\\
& \quad + \sum^{m}_{k=m_\d + 1} \frac{1}{n} \sum^{n}_{j=1} \bar{W}_{ij}^{kn} D_k(v_j^n(t) - v_i^n(t)),
\quad i \in [n].
\label{eqn:ask}
\end{align}
Let $u_n(t)$ and $v_n(t)$ denote the solutions to the IVPs
of \eqref{eqn:dsk} and \eqref{eqn:ask} with \eqref{eqn:icd} and
\begin{equation}
v_i^n(0) = u_{i0}^n,
\label{eqn:ica}
\end{equation}
respectively.
We adopt the discrete $L^2$-norm
\begin{equation*}
\|u_n\|_{2,n}
:=\biggl(\frac{1}{n}\sum_{i=1}^n (u_i^n)^2\biggr)^{1/2}.
\end{equation*}
We obtain the following estimate on the difference between the solutions $u_n(t)$ and $v_n(t)$.

\begin{lem}
\label{lem:b1}
Suppose that $\gamma_{\mathrm{max}}:=\max_{k \in [m]}\gamma_k\in(0,\frac{1}{2})$.
Then for any $T>0$ and $\delta\in (0,\tfrac{1}{2}-\gamma_{\rm max} )$
we have
\begin{equation}
\lim_{n \rightarrow \infty} n^{\frac{1}{2}-\gamma_{\rm max}-\delta}
\max_{t \in [0,T]} \|u_n(t) - v_n(t)\|_{2,n} = 0 \quad \mbox{a.s.}
\label{eqn:lemb}
\end{equation}
In particular,
\begin{equation*}
\lim_{n \rightarrow \infty}\max_{t \in [0,T]}\|u_n(t) - v_n(t)\|_{2,n}=0 \quad
\mbox{a.s.}
\end{equation*}
\end{lem}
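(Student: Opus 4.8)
The plan is a Gr\"onwall estimate for the difference $e_i^n(t):=u_i^n(t)-v_i^n(t)$, in which the only genuinely random contribution comes from having replaced each random weight $w_{ij}^{kn}$ ($k>m_\d$) by its rescaled mean $\bar W_{ij}^{kn}$. Subtracting \eqref{eqn:ask} from \eqref{eqn:dsk} and using $e_i^n(0)=0$ (by \eqref{eqn:icd} and \eqref{eqn:ica}), one writes
\[
e_i^n(t)=\int_0^t\Bigl(f(u_i^n,s)-f(v_i^n,s)+\sum_{k=1}^m A_i^{kn}(s)+\sum_{k=m_\d+1}^m B_i^{kn}(s)\Bigr)ds,
\]
where $A_i^{kn}(s)=(n\alpha_{kn})^{-1}\sum_{j=1}^n w_{ij}^{kn}\bigl(D_k(u_j^n-u_i^n)-D_k(v_j^n-v_i^n)\bigr)$ (with $\alpha_{kn}=1$ for $k\le m_\d$) collects the Lipschitz-type differences, which carry identical coefficients in the two systems, and
\[
B_i^{kn}(s)=\frac1n\sum_{j=1}^n\bigl(\alpha_{kn}^{-1}w_{ij}^{kn}-\bar W_{ij}^{kn}\bigr)D_k\bigl(v_j^n(s)-v_i^n(s)\bigr)
\]
is the fluctuation term. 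The crucial observation is that $v_n(t)$ solves \eqref{eqn:ask}, whose coefficients $w_{ij}^{kn}$ ($k\le m_\d$) and $\bar W_{ij}^{kn}$ ($k>m_\d$) are all \emph{deterministic}; hence $v_n(t)$ is a deterministic function of $t$, bounded on $[0,T]$ uniformly in $n$ together with its derivative, and for each fixed $s$ the sum $B_i^{kn}(s)$ is a sum over $j$ of independent, mean-zero random variables with deterministic coefficients.

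For the Lipschitz terms, from $|D_k(u_j^n-u_i^n)-D_k(v_j^n-v_i^n)|\le L_D(|e_i^n|+|e_j^n|)$ and a Cauchy--Schwarz/Schur-test bound for the discrete norm $\|\cdot\|_{2,n}$ one gets $\|(A_i^{kn}(s))_i\|_{2,n}\le L_D\bigl(\max_i\tfrac{d^+_{kn,i}}{n\alpha_{kn}}+\bigl(\max_i\tfrac{d^+_{kn,i}}{n\alpha_{kn}}\bigr)^{1/2}\bigl(\max_j\tfrac{d^-_{kn,j}}{n\alpha_{kn}}\bigr)^{1/2}\bigr)\|e_n(s)\|_{2,n}$. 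By \eqref{eqn:dd} the normalized degrees are bounded deterministically for the dense (in particular all deterministic) graphs, while for the random graphs a Bernstein/Chernoff bound on $d^{\pm}_{kn,i}$ around its mean \eqref{eqn:dr} combined with Borel--Cantelli (using $n\alpha_{kn}\to\infty$) gives an a.s.\ bound uniform in $i$ for $n$ large. Hence there is a deterministic constant $K$ with $\bigl\|\bigl(\sum_{k}A_i^{kn}(s)\bigr)_i\bigr\|_{2,n}\le K\|e_n(s)\|_{2,n}$ a.s.\ for $n$ large, and the $f$-term contributes $\le L_f\|e_n(s)\|_{2,n}$.

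For the fluctuation term, set $\xi_{ij}^{kn}:=\alpha_{kn}^{-1}w_{ij}^{kn}-\bar W_{ij}^{kn}$; by \eqref{eqn:sdr} these are independent, $\Eset\xi_{ij}^{kn}=0$, $|\xi_{ij}^{kn}|\le\alpha_{kn}^{-1}=n^{\gamma_k}$, and $\sum_{j}\mathrm{Var}(\xi_{ij}^{kn})=O(n^{1+\gamma_k})$ by \eqref{eqn:assumpy}. Since $|D_k|\le C_D$ and $v_n(t)$ is deterministic, Bernstein's inequality applied to $B_i^{kn}(t)$ gives $\Pset(|B_i^{kn}(t)|>\eta)\le 2\exp\bigl(-c\,n^2\eta^2/(n^{1+\gamma_k}(1+\eta))\bigr)$ for a universal $c>0$. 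Choose $\beta\in\bigl(\max\{0,\tfrac12-\gamma_{\rm max}-\delta\},\,\tfrac{1-\gamma_{\rm max}}{2}\bigr)$, a nonempty interval since $\gamma_{\rm max}<\tfrac12$, and take $\eta=n^{-\beta}$: the bound becomes $\le\exp(-n^{\rho})$ with $\rho=1-\gamma_{\rm max}-2\beta>0$. Because $v_n$ has a.s.\ (indeed deterministically) uniformly bounded derivative on $[0,T]$ and the normalized degrees are a.s.\ bounded, $t\mapsto B_i^{kn}(t)$ is Lipschitz with modulus that is a.s.\ $O(1)$ for $n$ large, so covering $[0,T]$ by a mesh of $O(n^{\beta})$ points and taking a union bound over the mesh, over $i\in[n]$ and over $k$ keeps the probability summable in $n$; Borel--Cantelli then yields $\max_{t\in[0,T]}\sum_{k=m_\d+1}^m\|(B_i^{kn}(t))_i\|_{2,n}\le C'n^{-\beta}$ a.s.\ for $n$ large. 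Combining the two steps with the integral triangle inequality, $\|e_n(t)\|_{2,n}\le\int_0^t(K+L_f)\|e_n(s)\|_{2,n}ds+C'Tn^{-\beta}$ a.s.\ for $n$ large, so Gr\"onwall's lemma gives $\max_{t\in[0,T]}\|e_n(t)\|_{2,n}\le C'Te^{(K+L_f)T}n^{-\beta}$ a.s.; since $\beta>\tfrac12-\gamma_{\rm max}-\delta$ this proves \eqref{eqn:lemb}, and the unweighted limit follows (it is also immediate from \eqref{eqn:lemb}, $\delta$ being arbitrary).

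I expect the main obstacle to be the fluctuation step: one must produce a tail bound for $B_i^{kn}(t)$ that is \emph{simultaneously} uniform in $t\in[0,T]$ and summable in $n$ after a union bound over all $n$ nodes and the time mesh. Uniformity in $t$ rests on the a.s.\ equicontinuity of $u_n$ and $v_n$, which in turn relies on the degree concentration established in the Lipschitz step; summability forces the use of Bernstein's inequality rather than a Hoeffding-type bound, exploiting that $\sum_j\mathrm{Var}(\xi_{ij}^{kn})=O(n^{1+\gamma_k})$ is far smaller than the crude bound $n\cdot n^{2\gamma_k}$ coming from the range of $\xi_{ij}^{kn}$, and it is precisely this gain that makes $\gamma_{\rm max}<\tfrac12$ the natural hypothesis and the exponent $\tfrac12-\gamma_{\rm max}-\delta$ the natural rate.
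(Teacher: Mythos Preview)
Your argument is correct in structure and arrives at the stated rate, but it differs substantially from the paper's proof. The paper works with the energy identity $\tfrac12\tfrac{d}{dt}\|\psi_n\|_{2,n}^2=I_1+I_2+I_3$, decouples the fluctuation $I_2$ from $\psi_n$ via Young's inequality, and then controls the \emph{time integral} $\int_0^T e^{-Ls}\|Z_{kn}(s)\|_{2,n}^2\,ds$ directly by invoking Medvedev's earlier argument (Theorem~4.1 of \cite{M19}); no mesh in $t$ and no union bound over nodes is needed, and the final rate $\|\psi_n\|_{2,n}=O(n^{-(1/2-\gamma_{\max})})$ drops out of one Gronwall step. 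Your route is instead a first-order Gronwall on $\|e_n(t)\|_{2,n}$, with the fluctuation handled by a pointwise Bernstein bound on each $B_i^{kn}(t)$, lifted to uniformity in $t$ by a mesh/chaining argument and to all $i,k$ by a union bound, then upgraded to a.s.\ via Borel--Cantelli. The advantage of your approach is that it is self-contained and makes the role of the hypothesis $\gamma_{\max}<\tfrac12$ transparent through the variance count; it even yields a slightly better exponent $\beta$ up to $\tfrac12-\tfrac{\gamma_{\max}}{2}$. The paper's approach is shorter and avoids the delicate time-uniformity step, at the cost of relying on an external lemma.

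One point to tighten: you assert that $v_n$ is ``bounded on $[0,T]$ uniformly in $n$ together with its derivative'' and hence that $t\mapsto B_i^{kn}(t)$ has an $O(1)$ Lipschitz modulus. Under the stated hypothesis $g\in L^2(I)$ this need not hold: $\max_i|u_{i0}^n|$ can grow like $n^{1/2}$, and since $f$ is only Lipschitz (not bounded), $\max_i|\dot v_i^n|$ may grow polynomially in $n$. This is not fatal---a polynomial Lipschitz modulus only inflates the mesh to $O(n^{\beta+C})$ points, and the stretched-exponential tail $\exp(-cn^{\rho})$ still absorbs any polynomial prefactor in the Borel--Cantelli sum---but the sentence as written overstates what is available and should be rephrased accordingly.
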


\begin{proof}

Let $\psi_i^n := v_i^n - u_i^n$.
Subtracting \eqref{eqn:dsk} from \eqref{eqn:ask},
multiplying the resulting equation by $n^{-1}\psi_i^n$,
and summing over $i \in [n]$, we obtain
\begin{equation}
\frac{1}{2} \frac{d}{dt} \| \psi_n \|^{2}_{2,n}=I_1+I_2+I_3,
\label{eqn:lemb1}
\end{equation}
where
\begin{align*}
I_1=&\frac{1}{n} \sum^{n}_{i=1} ( f(v_i^n,t) - f(u_i^n,t) ) \psi_i^n,\\
I_2=&\sum^{m}_{k=m_\d + 1} \frac{1}{n^2 \alpha_{kn}} \sum^{n}_{i,j=1}
(\alpha_{kn} \bar{W}_{ij}^{kn} - w_{ij}^{kn}) D_k(v_j^n-v_i^n) \psi_i^n,\\
I_3=&\sum^{m}_{k=1} \frac{1}{n^2 \alpha_{kn}} \sum^{n}_{i,j=1}
w_{ij}^{kn} [D_k(v_j^n-v_i^n)-D_k(u_j^n-u_i^n)]\psi_i^n.
\end{align*}
The Lipschitz continuity of $f$ in $u$ immediately yields
\begin{equation}
|I_1| \leq L_f \| \psi_n \|^2_{2,n}.
\label{eqn:lemb2}
\end{equation}
Using the Lipschitz continuity of $D_k$ and the triangle inequality,
we have
\begin{align}
|I_3| &\leq \sum^{m}_{k=1} \frac{1}{n^2 \alpha_{kn}}
\sum^{n}_{i,j=1}w_{ij}^{kn}L_D(|\psi_{n,i}| + |\psi_{n,j}|)|\psi_{n,i}| \notag\\
&\leq \sum^{m}_{k=1} \frac{L_D}{n^2 \alpha_{kn}}
\biggl( \frac{3}{2}\sum^{n}_{i,j=1}w_{ij}^{kn}\psi_{n,j}^2
+ \frac{1}{2} \sum^{n}_{i,j=1}w_{ij}^{kn}\psi_{n,j}^{2}\biggr)
\label{eqn:lemb3}
\end{align}
since $|\psi_i^n|\,|\psi_j^n| \leq (|\psi_i^n|^2 + |\psi_j^n|^2)/2$.
From \eqref{eqn:dd} we have
\begin{equation}
\begin{split}
&
\frac{1}{n^2 \alpha_{kn}}\sum^{n}_{i,j=1}w_{ij}^{kn}\,\psi^{2}_{n,i}
\le C_1 \| \psi \|^{2}_{n,2},\\
&
\frac{1}{n^2 \alpha_{kn}}\sum^{n}_{i,j=1}w_{ij}^{kn}\,\psi^{2}_{n,j}
\le C_2 \| \psi \|^{2}_{n,2}
\end{split}
\label{eqn:lemb4}
\end{equation}
for $k\in[m_\d]$ since $\alpha_{kn}=1$.
Proceeding as in the proof of Theorem~4.1 of \cite{M19} and using \eqref{eqn:dr},
we obtain
\begin{equation}
\begin{split}
&
\frac{1}{n^2 \alpha_{kn}} \sum^{n}_{i,j=1}w_{ij}^{kn}\psi^{2}_{n,i}
\leq (1 + C_1) \| \psi \|^{2}_{n,2}\quad\text{a.s.},\\
&
\frac{1}{n^2 \alpha_{kn}} \sum^{n}_{i,j=1}w_{ij}^{kn}\psi^{2}_{n,j}
\leq (1 + C_2) \| \psi \|^{2}_{n,2}\quad\text{a.s.}
\end{split}
\label{eqn:lemb5}
\end{equation}
for $k\in[m]\setminus[m_\d]$ and $n\gg 1$.
Substituting \eqref{eqn:lemb4} and \eqref{eqn:lemb5} into \eqref{eqn:lemb3} yields
\begin{equation}
|I_3 | \leq L_D \biggl(2(m-m_\d)
+\frac{3m}{2} C_1+\frac{m}{2}C_2\biggr) \| \psi \|^{2}_{n,2}\quad\text{a.s.}
\label{eqn:lemb6}
\end{equation}

It remains to estimate $|I_2|$.
Define the random variables
\[
Z_{kn,i}(t)= \frac{1}{n} \sum^{n}_{j=1} b_{ij}^{kn} (t) \eta_{ij}^{kn},\quad
i\in[n],
\]
for $k \in [m] \setminus[m_\d]$, where
\[
b_{ij}^{kn}(t)=D_k(v_j^n(t)-v_i^n(t)),\quad
\eta_{ij}^{kn}=w_{ij}^{kn} - \alpha_{kn} \bar{W}_{ij}^{kn}.
\]
Noting that
\[
\frac{Z_{kn,i}(t)}{\alpha_{kn}}\psi_i^n(t)
\leq\frac{1}{2}\biggl(\biggl(\frac{Z_{kn,i}(t)}{\alpha_{kn}}\biggr)^2+\psi_i^n(t)^{2}\biggr),
\]
we have
\begin{align}
| I_2 | = &\biggl| \sum^{m}_{k= m_\d + 1} \frac{1}{n \alpha_{kn}}
\sum^{n}_{i=1} Z_{kni}(t) \psi_i^n(t) \biggr|\notag\\
\leq& \sum^{m}_{k=m_\d + 1} \frac{1}{2 \alpha^{2}_{kn}} \| Z_{kn}(t) \|^{2}_{2,n}
+ \frac{m - m_\d}{2} \| \psi_n(t) \|^{2}_{2,n},
\label{eqn:lemb7}
\end{align}
where $Z_{kn} = (Z_{kn,1},Z_{kn,2}, \ldots, Z_{kn,n})$.
Thus, it follows from \eqref{eqn:lemb1}, \eqref{eqn:lemb2}, \eqref{eqn:lemb6}
 and \eqref{eqn:lemb7} that
\begin{equation}
\frac{d}{dt} \| \psi_n (t) \|^{2}_{2,n} \leq L \| \psi_n(t) \|^{2}_{2,n}
+ \sum^{m}_{k=m_\d + 1} \frac{1}{\alpha_{kn}^2} \| Z_{kn}(t) \|^{2}_{2,n},
\label{eqn:lemb8}
\end{equation}
where
\begin{equation*}
L = 2L_f + mL_D(3C_1+C_2)+(m-m_\d)(4L_D+1).
\end{equation*}

Using Gronwall's inequality for \eqref{eqn:lemb8}, we obtain
\begin{equation}
\max_{t\in[0,T]}\|\psi_n (t)\|_{2,n}^{2}
\leq \sum^{m}_{k=m_\d + 1} \frac{2L}{\alpha^{2}_{kn}} e^{LT} \int^{T}_{0} e^{-Ls}
\| Z_{kn}(s) \|^{2}_{2,n} ds.
\label{eqn:lemb9}
\end{equation}
We proceed as in the proof of Theorem~4.1 of \cite{M19} again
and estimate
\begin{equation}
\int^{T}_{0} e^{-Ls} \| Z_{kn}(s) \|^{2}_{2,n} ds
\leq \frac{C_3}{n}\quad\text{a.s.}
\label{eqn:lemb10}
\end{equation}
for some $C_3>0$,
after some lengthy arguments.
Combining \eqref{eqn:lemb9} and \eqref{eqn:lemb10}
and noting that $\alpha_{kn} = n^{-\gamma_k}$, we have
\begin{align}
\max_{t \in [0,T]} \| \psi_n (t) \|^{2}_{2,n}
\leq&\frac{2LC_3}{n}e^{LT} \sum^{m}_{k = m_\d + 1}\alpha_{kn}^{-2}\notag\\
\leq&2C_3 e^{LT} (m - m_\d) n^{-(1 - 2\gamma_\mathrm{max})}\quad\text{a.s.},
\label{eqn:sec4a}
\end{align}
which yields \eqref{eqn:lemb}.
\end{proof}

We rewrite the averaged model \eqref{eqn:ask} and the initial condition \eqref{eqn:ica} as
\begin{align}
\frac{\partial}{\partial t}v_n(t,x)
=&f(v_n(t,x),t)+\sum_{k=1}^{m_\d}\int_I W_{kn}(x,y)D_k(v_n(t,y)-v_n(t,x))dy\notag\\
& +\sum_{k=m_\d+1}^m\int_I \bar{W}_{kn}(x,y)D_k(v_n(t,y)-v_n(t,x))dy
\label{eqn:ask1}
\end{align}
and
\begin{equation}
v_n(0,x)=g_n(x):=\sum_{i=1}^ng_i^n\mathbf{1}_{I_i^n}(x),
\label{eqn:ica1}
\end{equation}
where
\begin{align*}
&
W_{kn} (x,y)
=\sum_{i=1}^{n}\langle W_k \rangle_{ij}^n\mathbf{1}_{I_i^n\times I_j^n}(x,y),\\
&
\bar{W}_{kn} (x,y)
=\sum_{i=1}^{n}\langle\tilde{W}_k \rangle_{ij}^n\mathbf{1}_{I_i^n\times I_j^n}(x,y).
\end{align*}

\begin{proof}[Proof of Theorem~$\ref{thm:main2}$]
Thanks to Lemma~\ref{lem:b1}, we only have to prove that
the solution $v_n(t,x)$ to the IVP
of the averaged model \eqref{eqn:ask1} with \eqref{eqn:ica1}
converges to the solution $u(t,x)$
of the IVP of the continuum limit \eqref{eqn:csk} with \eqref{eqn:icc}.
We follow the proof of Theorem~5.1 of \cite{M19} with some modifications.

Let $\psi_n(t,x)=u(t,x)-v_n(t,x)$.
Subtracting \eqref{eqn:ask1} from \eqref{eqn:csk},
multiplying the resulting equation by $\psi_n(t,x)$
and integrating it over $I$, we have
\begin{align}
&\frac{1}{2}\frac{d}{dt}\|\psi_n(t,\cdot)\|^2\notag\\
& = \int_I(f(u(t,x),t)-f(v_n(t,x))\psi_n(t,x)dx\notag\\
&\quad
+\sum_{k=1}^m\int_{I^2}W_k(x,y)[D_k(u(t,y)-u(t,x))-D_k(v_n(t,y)-v_n(t,x))]\psi_n(t,x)dxdy\notag\\
&\quad
+\sum_{k=1}^{m_\d}\int_{I^2}(W_k(x,y)-W_{kn}(x,y))D_k(v_n(t,y)-v_n(t,x))]\psi_n(t,x)dxdy\notag\\
&\quad
+\sum_{k=m_\d+1}^m\int_{I^2}(W_k(x,y)-\bar{W}_{kn}(x,y))D_k(v_n(t,y)-v_n(t,x))]\psi_n(t,x)dxdy.
\label{eqn:thm2a}
\end{align}
By the Lipschitz continuity of $f$
\begin{equation}
\biggl|\int_I(f(u(t,x),t)-f(v_n(t,x))\psi_n(t,x)dx\biggr|
\le L_f\|\psi_n(t,\cdot)\|^2.
\label{eqn:thm2b}
\end{equation}
Using Young's inequality and Fubini's theorem
along with the Lipschitz continuity of $D_k$,
\eqref{eqn:assumpx} and \eqref{eqn:assumpy}, we have
\begin{align}
&
\biggl|\int_{I^2}W_k(x,y)[D(u(t,y)-u(t,x))-D(v_n(t,y)-v_n(t,x))]\psi_n(t,x)dxdy\biggr|\notag\\
&
\le L_D\biggl|\int_{I^2}W_k(x,y)(|\psi_n(t,y)|+|\psi_n(t,x)|)|\psi_n(t,x)|dxdy\biggl|\notag\\
&
\le\frac{1}{2}L_D(3C_1+C_2)\|\psi_n(t,\cdot)\|^2
\label{eqn:thm2c}
\end{align}
for $k\in[m]$.
Using Young's inequality and the boundedness of $D_k$, we have
\begin{align}
&
\biggl|\int_{I^2}(W_k(x,y)-W_{kn}(x,y))D(v_n(t,y)-v_n(t,x))\psi_n(t,x)dxdy\biggl|\notag\\
&
\le\frac{C_D}{2}(\|W_n-W_{kn}\|_{L^2(I^2)}^2+\|\psi_n(t,\cdot)\|^2)
\label{eqn:thm2d}
\end{align}
for $k\in[m_\d]$ and
\begin{align}
&
\biggl|\int_{I^2}(W_k(x,y)-\bar{W}_{kn}(x,y))D(v_n(t,y)-v_n(t,x))\psi_n(t,x)dxdy\biggl|\notag\\
&
\le\frac{C_D}{2}(\|W_n-\bar{W}_{kn}\|_{L^2(I^2)}^2+\|\psi_n(t,\cdot)\|^2)
\label{eqn:thm2e}
\end{align}
for $k\in[m]\setminus[m_\d]$.

Combining \eqref{eqn:thm2a}-\eqref{eqn:thm2e}, we have
\begin{align*}
\frac{d}{dt}\|\psi_n(t,\cdot)\|^2
\leq& L_2\|\psi_n(t,\cdot)\|^2
+C_D\sum_{k=1}^{m_\d}\|W_n-W_{kn}\|_{L^2(I^2)}^2\\
& +C_D\sum_{k=m_\d+1}^m\|W_n-\bar{W}_{kn}\|_{L^2(I^2)}^2,
\end{align*}
where $L_2=2L_f+m(L_D(3C_1+C_2)+C_D)$.
Applying Gronwall's inequality to the above equation, we obtain
\begin{align*}
\sup_{t\in[0,T]}\|\psi_n(t,\cdot)\|^2
\le& e^{L_2T}\biggl(\|g-g_n\|+C_D\sum_{k=1}^{m_\d}\|W_n-W_{kn}\|_{L^2(I^2)}^2\\
& +C_D\sum_{k=m_\d+1}^m\|W_n-\bar{W}_{kn}\|_{L^2(I^2)}^2\biggr).
\end{align*}
Obviously, $\|W_n-W_{kn}\|_{L^2(I^2)}\to 0$ as $n\to\infty$ for $k\in[m_\d]$.
We also show that $\|W_n-\bar{W}_{kn}\|_{L^2(I^2)}\to 0$ as $n\to 0$
for $k\in[m]\setminus[m_\d]$,
as in the proof of Theorem 5.1 of \cite{M19}.
This completes the proof.
\end{proof}


\end{document}